\newcommand{\egaldef}{\mathrel{:=}}
\newcommand*{\CF}{\mathcal{F}}
\newcommand*{\E}{\mathbb{E}}
\newcommand*{\EE}[1]{\E\left[#1\right]}
\newcommand*{\xR}{\mathbb{R}}
\newcommand*{\scal}[1]{\left\langle #1 \right\rangle} 
\newcommand*{\nrm}[1]{\left\| #1 \right\|}            
\newcommand*{\Var}{\mathbf{Var}}                      
\newtheorem{assmptn}{Assumption}
\newtheorem{thm}{Theorem}[section]
\newtheorem{cor}[thm]{Corollary}
\newtheorem{lem}[thm]{Lemma}
\newtheorem{prop}[thm]{Proposition}
\begin{document}

\title{Fast Estimation of the Median Covariation Matrix with Application to Online Robust Principal Components Analysis}

\author{Herv\'e \textsc{Cardot}, Antoine \textsc{Godichon-Baggioni} \\ Institut de Math\'ematiques de Bourgogne, \\ Universit\'e de Bourgogne Franche-Comt\'e, \\
9, rue Alain Savary, 21078 Dijon, France} 
\maketitle


\begin{abstract}
The geometric median covariation matrix  is a robust multivariate indicator of dispersion which can be extended without any difficulty to functional data. 
We define estimators, based on recursive algorithms, that can be simply updated at each new observation  and are able to deal rapidly with  large samples of high dimensional data without being obliged to store all the data in memory.  Asymptotic convergence properties of the  recursive  algorithms are studied under weak conditions. The computation of the principal components can also be performed online  and this approach can  be useful for online outlier detection.
A simulation study clearly shows that this robust indicator is a competitive alternative to  minimum covariance determinant  when the dimension of the data is small and  robust  principal components analysis based on projection pursuit and spherical projections  for high dimension data.  An illustration on a large sample and high dimensional dataset consisting of individual TV audiences measured at a minute scale over a period of 24 hours confirms  the interest of considering the robust principal components analysis based on the median covariation matrix. All studied algorithms are available in the R package \texttt{Gmedian} on CRAN.
\end{abstract}

\noindent \textbf{Keywords.}  Averaging, Functional data,  Geometric median, Online algorithms,  Online principal components, Recursive robust estimation, Stochastic gradient, Weiszfeld's algorithm.

\section{Introduction}

Principal Components Analysis is one of the most useful statistical tool to extract information by reducing the dimension when one has to analyze large samples of multivariate or  functional data (see {\it e.g.}  \cite{Jolliffe2002} or \cite{RamsaySilverman2005}). When both the dimension and the sample size are large, outlying observations may be  difficult  to detect automatically. Principal components, which are derived from the spectral analysis of the covariance matrix, can be very sensitive to outliers (see \cite{DGK1981}) and many robust procedures for  principal components analysis have been considered in the literature  (see \cite{HRVA2008}, \cite{HubR2009} and \cite{MR2238141}).
 
The most popular approaches are probably  the minimum covariance determinant  estimator (see \cite{RvD99}) and the robust projection pursuit  (see \cite{CR-G2005} and \cite{CFO2007}). Robust PCA based on projection pursuit has been extended to deal with functional data in \cite{HyndmanUllah2007} and \cite{BBTW2011}. Adopting another point of view, robust modifications of the covariance matrix, based on projection of the data onto the unit sphere, have been proposed in  \cite{LMSTZC1999} (see also  \cite{Ger08} and \cite{TKO2012}). 

We consider in this work another robust way of measuring association between variables, that can be extended directly to functional data. It is based on the notion of  median covariation matrix (MCM)  which is defined as the minimizer of an expected loss criterion based on the Hilbert-Schmidt norm (see \cite{KrausPanaretos2012} for a first definition in a more general $M$-estimation setting).  It can be seen as a geometric median (see \cite{Kem87} or \cite{MNO2010}) in  the particular Hilbert  spaces of square matrices (or operators for functional data) equipped with the Frobenius (or Hilbert-Schmidt) norm. 
The MCM  is non negative and unique  under weak conditions. As shown in \cite{KrausPanaretos2012} it also has the same eigenspace as the usual covariance matrix when the distribution of the data is symmetric and the second order moment is finite.  
Being a spatial median in a particular Hilbert space of matrices, the MCM is also a robust indicator of central location, among the covariance matrices, which  has a 50 \% breakdown point (see  \cite{Kem87} or \cite{MR2238141})  as well as a bounded gross sensitivity error (see \cite{CCZ11}). 

The aim of this  work is twofold.  It provides efficient recursive estimation algorithms of the MCM that are able to deal with large samples of high dimensional data. By this recursive property, these algorithms can naturally deal with data that are observed sequentially and provide a natural update of the estimators at each new observation. Another advantage compared to classical approaches is that  such recursive algorithms will not require to store all the data. Secondly, this work also aims at highlighting  the interest of considering the median covariation matrix to perform principal components analysis  of high dimensional contaminated data.

Different algorithms can be considered to get effective estimators of the MCM.  
When the dimension of the data is not too high and the sample size is not too large, Weiszfeld's algorithm (see \cite{Weiszfeld1937} and \cite{VZ00})  can be directly used to estimate effectively both the geometric median and the median covariation matrix.
When both the dimension and the sample size are large this static algorithm which requires to store all the data may be inappropriate and ineffective.   We show how the algorithm developed by \cite{CCZ11} for the geometric median in Hilbert spaces can be adapted to estimate recursively and simultaneously the median as well as the median covariation matrix. Then an averaging step (\cite{PolyakJud92}) of the two initial recursive estimators of the median and the MCM  permits to improve the accuracy of the initial stochastic gradient algorithms.  A simple modification of the stochastic gradient algorithm is proposed in order to ensure that the median covariance estimator is non negative. We also explain how the eigenelements of the estimator of the MCM can be updated online without being obliged to perform a new spectral decomposition at each new observation. 
 
The paper is organized as follows. The median covariation matrix as well as the recursive estimators are defined in Section 2. 
In Section 3, almost sure and quadratic mean consistency results are given for variables taking values in general separable Hilbert spaces.   The proofs, which are based on  new  induction steps compared to \cite{CCZ11}, allow to get better convergence rates in quadratic mean even if this new framework is much more complicated because two averaged non linear  algorithms are running simultaneously.   One can also note that the techniques generally employed to deal with  two time scale Robbins Monro algorithms (see \cite{MR2260078} for the multivariate case) require assumptions on the rest of the Taylor expansion and the finite dimension of the data that are too restrictive in our framework. 
In Section 4, a  comparison with some classic robust PCA techniques  is made on simulated data. The interest of considering the MCM is also highlighted  on the analysis of individual TV audiences, a large sample of high dimensional data which, because of its dimension, can not be analyzed in a reasonable time with classical robust PCA approaches.  The main parts of the proofs are described in Section 5. Perspectives for future research are discussed in Section 6. Some technical parts of the proofs as well as a description of Weiszfeld's algorithm in our context are gathered in  an Appendix. 



\section{Population point of view and recursive estimators}
Let $H$ be a separable Hilbert space (for example $H = \xR^d$ or $H = L^2(I)$, for some closed interval $I \subset \mathbb{R}$). We  denote by $\langle .,.\rangle$  its inner product and by  $\nrm{\cdot}$ the associated norm. 

We consider a random variable  $X$ that  takes values in $H$ and define its center $m \in H$ as follows:
\begin{align}
m  & \egaldef  \arg \min_{u \in H} \EE{\nrm{X - u} - \nrm{X}} .
\label{defmed}
\end{align}
The solution $m \in H$ is often called the geometric median of $X$. It is uniquely defined under broad assumptions on the distribution of $X$ (see \cite{Kem87}) which can be expressed as follows.

 \begin{assmptn}\label{eq:supportCdtnmed}
 There exist two linearly independent unit vectors $(u_1,u_2) \in H^2$, such that
\[
    \Var ( \scal{u,X} ) > 0, \quad \mbox{for }u \in \{u_1,u_2\} .
\]
\end{assmptn}

If the distribution of $X-m$ is symmetric around zero and if $X$ admits a first moment that is finite then the geometric median is equal to the expectation of $X$,  $m = \EE{X}$. 
Note however that the general definition (\ref{defmed}) does not require to assume that the first order moment of $\nrm{X}$ is finite since $| \EE{ \nrm{X-u} - \nrm{X}} | \leq \nrm{u}$. 

\subsection{The (geometric) median covariation matrix (MCM)}

We now consider  the special vector space, denoted by $\mathcal{S}(H)$, of $d \times d$ matrices when $H= \mathbb{R}^d$, or for general separable Hilbert spaces $H$, the vector space of linear operators mapping $H \to H$. Denoting by $\{e_j, j \in J \}$ an orthonormal basis in $H$, the vector space $\mathcal{S}(H)$ equipped with the following inner product:
\begin{align}
\langle A, B \rangle_F &= \sum_{j \in J} \langle A e_j, B e_j \rangle
\end{align}
 is also a separable Hilbert space. In $\mathcal{S}(\mathbb{R}^d)$, we have equivalently
 \begin{align}
\langle A, B \rangle_F &= \mbox{tr} \left( A^T B \right),
\end{align}
where $A^T$ is the transpose matrix of $A$. 
 The induced norm is the well known Frobenius norm (also called Hilbert-Schmidt norm) and is denoted by $\nrm{.}_F .$

When $X$ has finite second order moments, with expectation $\EE{X}=\mu$,  the covariance matrix of $X$, $\EE{(X-\mu)(X-\mu)^T}$ can be defined as the minimum argument, over all the elements belonging to $\mathcal{S}(H)$, of the functional $G_{\mu,2} : \mathcal{S}(H) \to \mathbb{R}$,
\[
G_{\mu,2}(\Gamma) = \EE{ \nrm{(X-\mu)(X-\mu)^T - \Gamma}^2_F - \nrm{(X-\mu)(X-\mu)^T}_F^2}. 
\]  
Note that in general Hilbert spaces with inner product $\langle ., . \rangle$, operator $(X-\mu)(X-\mu)^T$ should be understood as the operator $u \in H \mapsto \langle u, X-\mu \rangle (X-\mu)$.
The MCM is obtained by removing the squares in previous function in order to get a more robust indicator of "covariation". 
For $\alpha \in H$, define $G_\alpha : \mathcal{S}(H) \to \mathbb{R}$ by
\begin{align}
G_\alpha (V) &:= \EE{ \nrm{(X-\alpha)(X-\alpha)^T - V}_F - \nrm{(X-\alpha)(X-\alpha)^T}_F} . 
\label{def:popriskcov}
\end{align}
The median covariation matrix, denoted by  $\Gamma_m$, is defined as the minimizer of $G_m(V)$ over all elements $V \in \mathcal{S}(H)$.
The second term at the right-hand side of (\ref{def:popriskcov}) prevents from having to introduce hypotheses on the existence of the moments of $X$.
Introducing the random variable $Y := (X- m)(X- m)^T$ that takes values in $\mathcal{S}(H)$, the MCM is unique provided that the support of $Y$ is not concentrated on a line  and Assumption 1 can be rephrased as follows in $\mathcal{S}(H)$,
\begin{assmptn}\label{eq:supportCdtnCov}
 There exist two linearly independent unit vectors $(V_1,V_2) \in \mathcal{S}(H)^2$, such that
\[
    \Var ( \scal{V,Y}_F ) > 0, \quad \mbox{for }V \in \{V_1,V_2\} .
\]
\end{assmptn}

We can remark that Assumption \ref{eq:supportCdtnmed} and Assumption \ref{eq:supportCdtnCov} are strongly connected. 
Indeed, if Assumption \ref{eq:supportCdtnmed} holds, then $\Var ( \scal{u,X} ) > 0$ for $ u \in \{u_1, u_2\}$. Consider the rank one matrices $V_1 = u_1u_1^T$ and $V_2 = u_2 u_2^T$, we have $\scal{V_1,Y}_F = \langle u_1, X-m \rangle^2$ which has a strictly positive variance when the distribution of $X$ has no atom. More generally $ \Var(\scal{V_1,Y}_F) >0$ unless there is a scalar $a >0$ such that $\mathbb{P}\left[ \langle u_1, X-m \rangle = a\right] = \mathbb{P}\left[ \langle u_1, X-m \rangle = -a\right] = \frac{1}{2}$ (assuming also that $\mathbb{P}\left[  X-m = 0\right] = 0$).

Furthermore it can be deduced easily that the MCM, which is a geometric median in the particular Hilbert spaces of Hilbert-Schmidt operators,  is a robust indicator with a 50\% breakdown point (see \cite{Kem87}) and a bounded sensitive gross error (see \cite{CCZ11}).

We also assume that
\begin{assmptn}\label{eq:invMomentCov}
 There is a constant $C$ such that for all $h \in H$ and all $V \in \mathcal{S}(H)$
 \begin{align*}
 (a) &: \quad    \EE{ \nrm{(X- h)(X- h)^T - V}^{-1}_F } \leq C. \\
 (b) &: \quad    \EE{ \nrm{(X- h)(X- h)^T - V}^{-2}_F } \leq C. \\
\end{align*}
\end{assmptn}
This assumption implicitly forces the distribution of $(X- h)(X- h)^T$ to have no atoms. It is more "likely" to be satisfied when the dimension $d$ of the data is large (see \cite{Cha92} and  \cite{CCZ11} for a discussion). Note that it could be weakened as  in \cite{CCZ11} by allowing points, necessarily different from the MCM $\Gamma_m$,  to have strictly positive masses. 
Considering the particular case $V=0$, Assumption~\ref{eq:invMomentCov}(a) implies that for all $h \in H$,
\begin{align}
\EE{ \frac{1}{\nrm{X- h}^{2}} } \leq C,
\label{cond:mominv2x}
\end{align}
and this is not restrictive when the dimension  $d$ of $H$ is equal or larger than 3. 

Under  Assumption~\ref{eq:invMomentCov}(a),  the functional $G_h$ is twice Fr\'echet differentiable, with gradient
\begin{align}
\nabla G_h (V) &= - \EE{ \frac{(X- h)(X- h)^T - V}{\nrm{(X- h)(X- h)^T - V}_F}}.
\label{def:gradV}
\end{align}
and Hessian operator, $ \nabla _h^2 G(V) : \mathcal{S}(H) \to \mathcal{S}(H)$,
\begin{align}
\nabla _h^2G (V) &=  \EE{ \frac{1}{\nrm{Y(h) - V}_F}\left( I_{S(H)} - \frac{(Y(h) - V) \otimes_F  (Y(h) - V) }{{\nrm{Y(h) - V}_F}^2} \right)}.
\label{def:HeV}
\end{align}
where $Y(h) = (X-h)(X-h)^T$, $I_{S(H)}$ is the identity operator on $\mathcal{S}(H)$  and $A \otimes_F B (V) = \langle A, V \rangle_F B$ for any elements $A, B$ and $V$ belonging to $\mathcal{S}(H)$.

Furthermore, $\Gamma_m$ is also defined as the unique zero of the non linear equation:
\begin{align}
\nabla G_m (\Gamma_m) &= 0.
\label{def:zeroV}
\end{align}
Remarking that previous equality can be rewritten as follows,
\begin{align}
\Gamma_m &=  \frac{1}{\EE{\frac{1}{ \nrm{ (X- m)(X-m)^T - \Gamma_m}_F} }}\EE{\frac{ (X-m)(X-m)^T }{ \nrm{ (X- m)(X-m)^T - \Gamma_m}_F}},
\label{def:baseweiszfled}
\end{align}
it is clear that $\Gamma_m$ is a bounded, symmetric and non negative operator in $\mathcal{S}(H)$.

As stated in Proposition~2 of \cite{KrausPanaretos2012}, operator $\Gamma_m$ has an important  stability property when the distribution of $X$ is symmetric, with finite second moment, \textit{i.e} $\EE{\nrm{X}^2}< \infty$. 
Indeed,  the covariance operator of $X$,  $\Sigma = \EE{(X-m)(X-m)^T}$, which is well defined in this case, and $\Gamma_m$ share the same eigenvectors:
if $e_j$ is an eigenvector of $\Sigma$ with corresponding eigenvalue  $\lambda_j$, then $\Gamma_m e_j = \tilde{\lambda}_j e_j$, for some non negative value $\tilde{\lambda}_j$.
This important result means that for Gaussian and more generally symmetric distribution (with finite second order moments), the covariance operator and the median covariation operator have the same eigenspaces. Note that it is also conjectured in \cite{KrausPanaretos2012} that the order of the eigenfunctions is also the same.

\subsection{Efficient recursive algorithms}

We suppose now that we have i.i.d. copies  $X_1, \ldots, X_n, \ldots$ of random variables with the same law as $X$.

For simplicity, we temporarily suppose that the median $m$ of $X$ is known.
We consider a sequence of (learning) weights $\gamma_n = c_\gamma / n^{\alpha}$, with $c_\gamma>0$ and $1/2 <\alpha <1$ and we define the recursive estimation procedure as follows
\begin{align}
W_{n+1} &= W_n + \gamma_n \frac{ (X_{n+1}-m)(X_{n+1}-m)^T - W_n}{ \nrm{(X_{n+1}-m)(X_{n+1}-m)^T - W_n}_F} \label{def:algoRMcov}\\
\overline{W}_{n+1} &= \overline{W}_{n} - \frac{1}{n+1} \left( \overline{W}_{n}  - W_{n+1} \right).
\end{align}
This algorithm can be seen as a particular case of the averaged stochastic gradient algorithm studied in \cite{CCZ11}.
Indeed, the first recursive algorithm (\ref{def:algoRMcov}) is a stochastic gradient algorithm, 
\[
\EE{ \frac{ (X_{n+1}-m)(X_{n+1}-m)^T - W_n}{ \nrm{(X_{n+1}-m)(X_{n+1}-m)^T - W_n}_F} | \CF_n } = \nabla G_m(W_n)
\]
 where $\CF_n =\sigma(X_1, \ldots, X_n)$ is the $\sigma$-algebra generated by $X_1, \ldots, X_n$ whereas the final estimator $\overline{W}_n$ is obtained by averaging the past values of the first algorithm. The averaging step (see \cite{PolyakJud92}), {\textit i.e.}  the computation of  the arithmetical mean of the past values of a slowly convergent estimator (see Proposition~\ref{prop:RMVn} below), permits to obtain a new and efficient estimator converging at a parametric rate, with the same asymptotic variance as the empirical risk minimizer  (see Theorem~\ref{theo:asymptnorm} below).

In most of the cases the value of $m$ is unknown so that it also required to estimate the median. 
To build an estimator of $\Gamma_m$, it is possible to estimate simultaneously
 $m$ and $\Gamma_m$ by considering two averaged stochastic gradient algorithms that are running simultaneously. For $n \geq 1$,
\begin{align}
m_{n+1} & = m_n + \gamma_n^{(m)} \frac{ X_{n+1}-m_n}{\nrm{X_{n+1}-m_n}} \nonumber \\
\overline{m}_{n+1} &= \overline{m}_{n} - \frac{1}{n+1} \left( \overline{m}_{n}  - m_{n+1} \right) \label{def:medaver} \\
V_{n+1} &= V_n + \gamma_n \frac{ (X_{n+1}-\overline{m}_n)(X_{n+1}-\overline{m}_n)^T - V_n}{ \nrm{(X_{n+1}-\overline{m}_n)(X_{n+1}-\overline{m}_n)^T - V_n}_F}  \label{def:Gammarm} \\
\overline{V}_{n+1} &= \overline{V}_{n} - \frac{1}{n+1} \left( \overline{V}_{n}  - V_{n+1} \right), \label{def:Gammamedaver}
\end{align}
where the averaged recursive estimator $\overline{m}_{n+1}$ of the median $m$ is controlled by a sequence of descent steps $ \gamma_n^{(m)}$. The learning rates are generally chosen as follows, $ \gamma_n^{(m)} = c_m n^{-\alpha}$, where the tuning constants satisfy $c_m \in [2,20]$ and $1/2 < \alpha < 1$.

Note  that by  construction, even if $V_n$ is non negative, $V_{n+1}$ may not  be a non negative matrix  when the learning steps do not satisfy
\[
\frac{\gamma_n}{\nrm{(X_{n+1}-\overline{m}_n)(X_{n+1}-\overline{m}_n)^T - V_n}_F} \leq 1 .
\]
 Projecting  $V_{n+1}$ onto the closed convex cone of non negative operators would require to compute the eigenvalues  of $V_{n+1}$ which is time consuming in high dimension even if $V_{n+1}$ is a rank one perturbation to $V_n$ (see \cite{CD2015}).  We consider the following  simple approximation to this projection which consists in replacing   in (\ref{def:Gammarm}) the descent step $\gamma_n$  by a thresholded one,
\begin{align}
\gamma_{n,pos} &= \min \left( \gamma_n , \ \nrm{(X_{n+1}-\overline{m}_n)(X_{n+1}-\overline{m}_n)^T - V_n}_F \right)
\label{def:gammamodif}
\end{align}
which ensures that  $V_{n+1}$ remains non negative when $V_n$ is non negative. The use of these modified steps and an initialization of the recursive algorithm (\ref{def:Gammarm}) with a non negative matrix (for example $V_0=0$) ensure that for all $n \geq 1$, $V_n$ and $\overline{V}_n$ are non negative.

\subsection{Online estimation of the principal components}

It is also possible to approximate recursively the $q$ eigenvectors (unique up to sign) of $\Gamma_m$ associated to the $q$ largest eigenvalues without being obliged to perform a spectral decomposition of $\overline{V}_{n+1}$ at each new observation. Many recursive strategies can be employed (see 
\cite{CD2015} for a review on various recursive estimation procedures of  the eigenelements of a covariance matrix). Because of its simplicity and its accuracy, we consider the following one: 
\begin{align}
 u_{j,n+1} &= u_{j,n} + \frac{1}{n+1} \left( \overline{V}_{n+1} \frac{u_{j,n}}{\| u_{j,n}\|} - u_{j,n} \right), \quad j=1, \ldots, q 
 \label{algo:vectp}
\end{align}
combined with an orthogonalization by deflation of  $u_{1,n+1}, \ldots u_{q,n+1}$.
This recursive algorithm  is based on ideas developed by \cite{Wengetal2003} that are related to the power method for extracting eigenvectors. If we assume that the $q$ first eigenvalues $\lambda_1 > \cdots > \lambda_q$ are distinct, the estimated eigenvectors  $u_{1,n+1}, \ldots u_{q,n+1}$, which are uniquely determined up to sign change, tend to $\lambda_1 u_1, \ldots, \lambda_q u_q.$

Once the eigenvectors are computed, it is possible to compute the principal components as well as indices of outlyingness for each new observation (see \cite{HRVA2008} for a review of outliers detection with multivariate approaches).
 

\subsection{Practical issues, complexity and memory}

The recursive algorithms (\ref{def:Gammarm}) and (\ref{def:Gammamedaver}) require each  $O(d^2)$ elementary operations at each update. With the additional online estimation given in (\ref{algo:vectp}) of the $q$ eigenvectors associated to the $q$ largest eigenvalues, $O(qd^2)$ additional operations are required.
The orthogonalization procedure only requires $O(q^2d)$ elementary operations. 

Note that the use of classical Newton-Raphson algorithms for estimating the MCM (see \cite{FFC2012}) can not be envisaged for high dimensional data since the computation or the approximation of the Hessian matrix would require  $O(d^4)$ elementary operations. The well known and fast Weiszfeld's algorithm requires $O(nd^2)$ elementary operations for each sample with size $n$. However, the estimation cannot be updated automatically if the data arrive sequentially. Another drawback compared to the recursive algorithms studied in this paper is that all the data must be stored in memory, which is of order $O(nd^2)$ elements whereas the recursive technique require an amount of memory of order $O(d^2)$. 

The  performances of the recursive algorithms depend on the values of tuning parameters $c_\gamma$, $c_m$ and $\alpha$. The value of parameter $\alpha$ is often chosen to be $\alpha=2/3$ or $\alpha=3/4$. Previous empirical studies (see \cite{CCZ11}  and \cite{CardCC10}) have shown that, thanks to the averaging step, estimator $\overline{m}_n$ performs well and is not too sensitive to the choice of $c_m$, provided that the value of $c_m$ is not  too small.  An intuitive explanation could be  that here the recursive process is in some sense "self-normalized" since the deviations at each iteration in (\ref{def:algoRMcov}) have unit norm and  finding  some universal values for $c_m$ is possible.   Usual values for $c_m$ and $c_\gamma$  are in the interval $[2,20]$.
 When $n$ is fixed, this averaged recursive algorithm is about 30 times faster than the Weiszfeld's approach (see \cite{CCZ11}).

\section{Asymptotic properties} 

When $m$ is known, $\overline{W}_n$ can be seen as an averaged stochastic gradient estimator  of the geometric median in a particular Hilbert space and the asymptotic weak convergence of  such estimator has been studied in \cite{CCZ11}. They have  shown that:

\begin{thm} (\cite{CCZ11}, Theorem 3.4). \label{theo:asymptnorm} \\
If assumptions 1-3(a) hold, then as $n$ tends to infinity,
\[
\sqrt{n} \left(\overline{W}_n - \Gamma_m \right)   \rightsquigarrow \mathcal{N}(0, \Delta)
\]
where $\rightsquigarrow$ stands for convergence in distribution and $\Delta = \left(\nabla _m^2 (\Gamma_m)\right)^{-1} \Psi  \left(\nabla _m^2 (\Gamma_m)\right)^{-1}$  is the limiting  covariance operator, with $\Psi = \EE{\frac{(Y(m) - \Gamma_m) \otimes_F  (Y(m) - \Gamma_m) }{{\nrm{Y(m) - \Gamma_m}_F}^2}}.$
\end{thm}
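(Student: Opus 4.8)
The result is the Polyak--Ruppert averaging central limit theorem for the stochastic gradient recursion (\ref{def:algoRMcov}) in the Hilbert space $\mathcal{S}(H)$, so the plan is to follow the classical averaging argument. Writing $Y_{n+1} \egaldef (X_{n+1}-m)(X_{n+1}-m)^T$, I would first recast (\ref{def:algoRMcov}) as a Robbins--Monro scheme
\begin{align*}
W_{n+1} = W_n - \gamma_n \nabla G_m(W_n) + \gamma_n \xi_{n+1},
\end{align*}
where $\xi_{n+1} \egaldef \frac{Y_{n+1}-W_n}{\nrm{Y_{n+1}-W_n}_F} - \EE{\frac{Y_{n+1}-W_n}{\nrm{Y_{n+1}-W_n}_F}\middle|\CF_n}$ is a martingale-difference sequence adapted to $\CF_n$ whose increments are bounded by $2$, since the normalized deviations have unit Frobenius norm. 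The backbone is the almost sure convergence $W_n \to \Gamma_m$ together with the rate $\EE{\nrm{W_n - \Gamma_m}_F^2} = \Oh{\gamma_n} = \Oh{n^{-\alpha}}$, obtained from the local strong convexity of $G_m$, i.e. the positive definiteness of the Hessian $\nabla_m^2 G(\Gamma_m)$ guaranteed by Assumption~\ref{eq:supportCdtnCov}, combined with the inverse-moment bound of Assumption~\ref{eq:invMomentCov}.

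Setting $Z_n \egaldef W_n - \Gamma_m$ and $\Phi \egaldef \nabla_m^2 G(\Gamma_m)$, the next step is to linearize the gradient. By the second Fr\'echet differentiability established under Assumption~\ref{eq:invMomentCov}(a) and the stationarity $\nabla G_m(\Gamma_m)=0$ from (\ref{def:zeroV}), a Taylor expansion gives $\nabla G_m(W_n) = \Phi Z_n + r_n$ with a quadratic remainder $\nrm{r_n}_F \le C\nrm{Z_n}_F^2$. Substituting into the recursion, solving for $\Phi Z_n$, averaging over $k=1,\dots,n$, and using $\frac{1}{n}\sum_{k=1}^n Z_k = \overline{W}_n - \Gamma_m$, one reaches the decomposition
\begin{align*}
\sqrt{n}\,\Phi\left(\overline{W}_n - \Gamma_m\right) = \frac{1}{\sqrt{n}}\sum_{k=1}^n \xi_{k+1} + \frac{1}{\sqrt{n}}\sum_{k=1}^n \frac{Z_k - Z_{k+1}}{\gamma_k} - \frac{1}{\sqrt{n}}\sum_{k=1}^n r_k.
\end{align*}

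I would then control the last two terms and apply a martingale CLT to the first. For the telescoping (boundary) term, an Abel summation reduces it to $\nrm{Z_{n+1}}_F/\gamma_n$ plus $\sum_k \nrm{Z_k}_F\,\abs{\gamma_k^{-1}-\gamma_{k-1}^{-1}}$; since $\gamma_k = \cG k^{-\alpha}$ and $\nrm{Z_k}_F = \Oh{k^{-\alpha/2}}$ in quadratic mean, both pieces are $o(\sqrt{n})$ for $\alpha<1$. For the remainder term, $\frac{1}{\sqrt{n}}\sum_{k=1}^n \nrm{r_k}_F \lesssim \frac{1}{\sqrt{n}}\sum_{k=1}^n \nrm{Z_k}_F^2 \lesssim \frac{1}{\sqrt{n}}\sum_{k=1}^n k^{-\alpha} \asymp n^{1/2-\alpha}$, which tends to $0$ precisely because $\alpha > 1/2$. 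Finally, the conditional covariance operators of the bounded increments $\xi_{k+1}$ converge, by $W_n\to\Gamma_m$ and dominated convergence (justified by Assumption~\ref{eq:invMomentCov}(b)), to $\Psi = \EE{\frac{(Y(m)-\Gamma_m)\otimes_F(Y(m)-\Gamma_m)}{\nrm{Y(m)-\Gamma_m}_F^2}}$, the subtracted term $\nabla G_m(\Gamma_m)\otimes_F\nabla G_m(\Gamma_m)$ vanishing; boundedness of the increments supplies the Lindeberg condition, so a Hilbert-space martingale CLT gives $\frac{1}{\sqrt{n}}\sum_{k=1}^n \xi_{k+1} \rightsquigarrow \mathcal{N}(0,\Psi)$. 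Since $\Phi$ is self-adjoint and boundedly invertible, applying the continuous map $\Phi^{-1}$ yields $\sqrt{n}(\overline{W}_n-\Gamma_m)\rightsquigarrow \mathcal{N}(0,\Phi^{-1}\Psi\Phi^{-1}) = \mathcal{N}(0,\Delta)$.

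The main obstacle is not the averaging algebra but the two analytic inputs in infinite dimension. First, obtaining the quadratic remainder bound $\nrm{r_n}_F\le C\nrm{Z_n}_F^2$ and the local strong convexity is delicate because the loss is built from the Frobenius norm, which is singular where its argument vanishes; controlling the factor $1/\nrm{Y(m)-V}_F$ uniformly is exactly what the inverse-moment Assumption~\ref{eq:invMomentCov} is designed for, and securing the almost sure and quadratic-mean rate of $W_n$ that feeds the two negligibility estimates is the technically decisive part. Second, the martingale CLT in $\mathcal{S}(H)$ requires a tightness (trace-class) control on the limiting covariance $\Psi$ and a Lindeberg-type condition for the operator-valued increments; separability of $\mathcal{S}(H)$ and the uniform boundedness of the normalized increments are what render this tractable.
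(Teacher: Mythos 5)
Your proposal is correct in outline, but it takes a different route from the paper, which in fact contains no proof of this statement at all: Theorem~\ref{theo:asymptnorm} is imported verbatim from \cite{CCZ11} (their Theorem~3.4), the paper's entire argument being the identification that, when $m$ is known, the recursion (\ref{def:algoRMcov}) is exactly the averaged stochastic gradient algorithm of \cite{CCZ11} applied to the random variable $Y(m)=(X-m)(X-m)^T$, whose geometric median in the separable Hilbert space $\left(\mathcal{S}(H),\scal{\cdot,\cdot}_F\right)$ is $\Gamma_m$, so that the cited theorem applies with no further work. What you wrote is instead a reconstruction of the proof of the cited result itself: the classical Polyak--Ruppert argument (Robbins--Monro recursion with bounded martingale-difference increments, linearization through the Hessian, Abel summation of the telescoping term, negligibility of the quadratic remainder thanks to $\alpha>1/2$, Hilbert-space martingale CLT, inversion of the Hessian $\nabla_m^2 G(\Gamma_m)$, which is boundedly invertible by the paper's Proposition~\ref{convexity}). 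That is indeed how \cite{CCZ11} proceed, and it is also the exact template the present paper deploys in Section~5 for the genuinely new case where $m$ is unknown: compare your decomposition with (\ref{decxi})--(\ref{decdelta}) and your summation step with the proof of Theorem~\ref{th:cvgeqm}, which runs the same computation in quadratic mean, with two additional remainder terms $r_n$ and $r_n'$ created by the estimation of $m$. The citation buys brevity and avoids re-establishing the martingale CLT machinery in $\mathcal{S}(H)$; your reconstruction buys self-containedness and makes visible where each hypothesis enters. One point to fix if you were to write it out in full: the quadratic remainder bound $\nrm{r_n}_F\leq C\nrm{W_n-\Gamma_m}_F^2$ (the paper's Lemma~\ref{lemdelta}) is established under the inverse second-moment condition, Assumption~\ref{eq:invMomentCov}(b), which you also invoke for the dominated-convergence step, and the $L^2$ rate $\EE{\nrm{W_n-\Gamma_m}_F^2}=\Oh{n^{-\alpha}}$ (the known-$m$ analogue of Theorem~\ref{theol2l4}, see also \cite{godichon2015}) is likewise proved under 3(b); since 3(b) implies 3(a) by Cauchy--Schwarz, your argument is consistent with the hypotheses under which the result is actually proved in \cite{CCZ11}, but it is not a proof under 3(a) alone, as the statement transcribed in the paper might suggest.
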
  
As explained in \cite{CCZ11}, the estimator  $\overline{W}_n$ is efficient in the sense that it has the same asymptotic distribution as the empirical risk minimizer related to $G_m(V)$  (see for the derivation of its asymptotic normality in  \cite{MNO2010} in the multivariate case and \cite{ChaCha2014} in a more general functional framework).

Using the  delta method for weak convergence  in Hilbert spaces (see \cite{DauxoisPousseRomain82} or \cite{CGER2007}), one can deduce, from Theorem \ref{theo:asymptnorm}, the asymptotic normality of the estimated eigenvectors of $\overline{W}_n$.
It can also be proven (see \cite{godichon2015}), under Assumptions 1-3, that there is a positive constant $K$ such that for all $n \geq 1$,
\[
\mathbb{E}\left[ \left\| \overline{W}_{n} - \Gamma_{m} \right\|_{F}^{2} \right] \leq \frac{K}{n}.
\]
Note finally that non asymptotic bounds for the deviation of $\overline{W}_n$ around $\Gamma_m$ can be derived readily with the general results given in \cite{CCG2015}.

\medskip

The more realistic case in which $m$ must also be estimated is more complicated because $\overline{V}_n$ depends on $\overline{m}_n$ which is also estimated recursively with the same data. 
 We first  state the strong consistency of the estimators $V_n$ and $\overline{V}_n$.
\begin{thm}\label{theops}
If assumptions 1-3(b) hold, we have
\begin{align*}
\lim_{n \rightarrow \infty}\left\|V_{n} -\Gamma_{m}\right\|_{F}=0 \quad a.s.
\end{align*}
and 
\begin{align*}
\lim_{n \rightarrow \infty}\left\| \overline{V}_{n} - \Gamma_{m}\right\|_{F} &=0 \quad a.s.
\end{align*}
\end{thm}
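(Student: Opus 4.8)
The plan is to establish the almost sure convergence of $V_n$ first; the convergence of $\overline{V}_n$ then comes for free, since the recursion (\ref{def:Gammamedaver}) rewrites as $\overline{V}_{n+1} = \frac{n}{n+1}\overline{V}_n + \frac{1}{n+1}V_{n+1}$, so that $\overline{V}_n$ is the Cesàro average of $V_1, \dots, V_n$ and Toeplitz's lemma gives $\overline{V}_n \to \Gamma_m$ a.s. as soon as $V_n \to \Gamma_m$ a.s. For $V_n$ I would use the Lyapunov sequence $Z_n := \nrm{V_n - \Gamma_m}_F^2$ together with the normalised increment $\Phi_{n+1} := \bigl(Y_{n+1}(\overline{m}_n) - V_n\bigr)/\nrm{Y_{n+1}(\overline{m}_n) - V_n}_F$, where $Y_{n+1}(h) = (X_{n+1}-h)(X_{n+1}-h)^T$. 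Since $\overline{m}_n$ and $V_n$ are $\CF_n$-measurable and $X_{n+1}$ is independent of $\CF_n$, formula (\ref{def:gradV}) gives $\EE{\Phi_{n+1} \mid \CF_n} = -\nabla G_{\overline{m}_n}(V_n)$, while $\nrm{\Phi_{n+1}}_F = 1$.

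Expanding the square and taking the conditional expectation yields $\EE{Z_{n+1} \mid \CF_n} = Z_n - 2\gamma_n \scal{V_n - \Gamma_m, \nabla G_{\overline{m}_n}(V_n)}_F + \gamma_n^2$. The key manipulation is to compare the drift evaluated at the estimated centre $\overline{m}_n$ with the one at the true centre $m$: writing $\nabla G_{\overline{m}_n}(V_n) = \nabla G_m(V_n) + \bigl(\nabla G_{\overline{m}_n}(V_n) - \nabla G_m(V_n)\bigr)$, the term $\scal{V_n - \Gamma_m, \nabla G_m(V_n)}_F$ is non negative, since $G_m$ is convex with $\nabla G_m(\Gamma_m) = 0$, and it will be the decreasing term of a Robbins--Siegmund inequality. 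For the perturbation I would bound, using Assumption~\ref{eq:invMomentCov}(b) and the cancellation $Y(\overline{m}_n) - Y(m)$ in the numerators, a quantity of the form $\nrm{\nabla G_{\overline{m}_n}(V_n) - \nabla G_m(V_n)}_F \le L(\nrm{V_n}_F)\,\nrm{\overline{m}_n - m}$, then invoke Cauchy--Schwarz and $2\sqrt{Z_n} \le 1 + Z_n$. Because the coefficient $L(\nrm{V_n}_F)$ deteriorates with $\nrm{V_n}_F$, a preliminary step is required: I would first show that $\sup_n \nrm{V_n}_F < \infty$ a.s., which follows from the confining effect of the drift $-\nabla G_m(V)$ for $V$ far from $\Gamma_m$ (its inner product with $V - \Gamma_m$ grows without bound) once $\overline{m}_n$ is close to $m$. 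This provides an a.s. finite random Lipschitz constant $\bar L$ and leads to
\begin{align*}
\EE{Z_{n+1}\mid\CF_n} &\le \bigl(1 + \gamma_n \bar L\,\nrm{\overline{m}_n - m}\bigr) Z_n - 2\gamma_n \scal{V_n - \Gamma_m, \nabla G_m(V_n)}_F + \gamma_n \bar L\,\nrm{\overline{m}_n - m} + \gamma_n^2 .
\end{align*}

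The crux is then the summability of these random coefficients. I would use the quadratic-mean rate of the averaged median, $\EE{\nrm{\overline{m}_n - m}^2} \le C/n$, established for this averaged stochastic gradient estimator in \cite{CCZ11}, so that by Jensen $\EE{\gamma_n \nrm{\overline{m}_n - m}} \le c_\gamma\sqrt{C}\, n^{-\alpha - 1/2}$; since $\alpha > 1/2$ the series $\sum_n n^{-\alpha-1/2}$ converges, hence $\sum_n \gamma_n \nrm{\overline{m}_n - m}$ has finite expectation and is finite a.s., and $\sum_n \gamma_n^2 < \infty$ as well. The Robbins--Siegmund theorem then gives that $Z_n$ converges a.s. to a finite limit $Z_\infty \ge 0$ and that $\sum_n \gamma_n \scal{V_n - \Gamma_m, \nabla G_m(V_n)}_F < \infty$ a.s.

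It remains to show $Z_\infty = 0$. Since $\sum_n \gamma_n = \infty$, the a.s. convergence of the last series forces $\liminf_n \scal{V_n - \Gamma_m, \nabla G_m(V_n)}_F = 0$. Using the strict convexity guaranteed by Assumption~\ref{eq:supportCdtnCov}, in the quantitative form $\scal{V - \Gamma_m, \nabla G_m(V)}_F \ge \psi(\nrm{V-\Gamma_m}_F)$ for a function $\psi$ that is strictly positive away from the origin (such a lower bound being read off from the Hessian (\ref{def:HeV}) as in \cite{CCZ11}), a strictly positive value of $\sqrt{Z_\infty} = \lim_n \nrm{V_n - \Gamma_m}_F$ would keep these inner products bounded below, contradicting the $\liminf$. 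Hence $Z_\infty = 0$ and $V_n \to \Gamma_m$ a.s. A final routine check shows that the thresholded step $\gamma_{n,pos}$ of (\ref{def:gammamodif}) differs from $\gamma_n$ only on events whose contribution is a.s. summable, so it does not alter the conclusion. I expect the main obstacle to be precisely the treatment of the coupling term $\nabla G_{\overline{m}_n}(V_n) - \nabla G_m(V_n)$ under the sole inverse-moment control of Assumption~\ref{eq:invMomentCov} — both the a.s. boundedness of $V_n$ needed to tame the Lipschitz constant and the summability obtained from the $n^{-1/2}$ rate of $\overline{m}_n$ together with $\alpha > 1/2$ — since this interaction between the two simultaneous algorithms is exactly what is absent from the single-algorithm analysis of \cite{CCZ11}.
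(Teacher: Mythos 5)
Your proposal has the same overall architecture as the paper's proof --- rewrite the recursion with a martingale-difference increment, apply Robbins--Siegmund, identify the limit through convexity, and get $\overline{V}_n$ from Toeplitz's lemma --- but your drift/perturbation split is genuinely different, and the difference matters. The paper keeps the drift at the \emph{estimated} centre, $\scal{V_n-\Gamma_m,\nabla G_{\overline{m}_n}(V_n)-\nabla G_{\overline{m}_n}(\Gamma_m)}_F\ge 0$ by convexity of $G_{\overline{m}_n}$, so that the perturbation is $r_n=\nabla G_{\overline{m}_n}(\Gamma_m)-\nabla G_m(\Gamma_m)$, a difference of gradients evaluated at the \emph{fixed} point $\Gamma_m$; Lemma \ref{lem3maj} then yields $\nrm{r_n}_F\le 4\bigl(\sqrt{C}+C\sqrt{\nrm{\Gamma_m}_F}\bigr)\nrm{\overline{m}_n-m}$ with a \emph{deterministic} constant, and no control of $\nrm{V_n}_F$ is ever needed. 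You instead keep the drift at the true centre $m$ and push the $\overline{m}_n$-dependence into $\nabla G_{\overline{m}_n}(V_n)-\nabla G_m(V_n)$, evaluated at the moving point $V_n$, which is precisely what creates your state-dependent constant $L(\nrm{V_n}_F)$. In exchange, your identification step is slightly simpler (you only need convexity properties of the fixed functional $G_m$, whereas the paper needs the lower bound of Proposition \ref{convexity}/Corollary \ref{corforconv} uniformly over $h$ near $m$), and your summability argument --- $\EE{\nrm{\overline{m}_n-m}^2}\le C/n$, Jensen, then monotone convergence to get $\sum_n\gamma_n\nrm{\overline{m}_n-m}<\infty$ a.s. --- is clean, arguably tidier than the paper's use of the almost sure rate $\nrm{\overline{m}_n-m}^2=O\bigl((\ln n)^{1+\delta}/n\bigr)$ from \cite{godichon2015}; note only that the $1/n$ quadratic-mean rate you invoke is Theorem 4.2 of \cite{godichon2015}, not \cite{CCZ11}, which gives the CLT.

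The one soft spot is your preliminary step ``$\sup_n\nrm{V_n}_F<\infty$ a.s.\ by the confining effect of the drift'': as stated this is a heuristic, not a proof, and as a \emph{preliminary} step it risks circularity, since the natural way to prove such boundedness is the very Robbins--Siegmund inequality you are setting up. Fortunately it is also unnecessary. Mimicking the computation behind Lemma \ref{lem3maj} with $V_n$ in place of $\Gamma_m$, the constant grows only like a square root, $L(\nrm{V_n}_F)\le 4\bigl(\sqrt{C}+C\sqrt{\nrm{V_n}_F}\bigr)\lesssim 1+Z_n^{1/4}$ up to constants depending on $\nrm{\Gamma_m}_F$; hence the perturbation term in your recursion is bounded by $c\,\gamma_n\nrm{\overline{m}_n-m}\bigl(Z_n^{1/2}+Z_n^{3/4}\bigr)\le 2c\,\gamma_n\nrm{\overline{m}_n-m}\,(1+Z_n)$, using $Z^{1/2}\le 1+Z$ and $Z^{3/4}\le 1+Z$. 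This gives a Robbins--Siegmund inequality with $\CF_n$-measurable coefficients $a_n=b_n=2c\,\gamma_n\nrm{\overline{m}_n-m}$, which are a.s.\ summable by your own argument, so convergence of $Z_n$ (hence boundedness of $V_n$) comes out in one shot and the rest of your proof goes through; your aside on $\gamma_{n,pos}$ is not needed, since Theorem \ref{theops} concerns the algorithm (\ref{def:Gammarm}) with steps $\gamma_n$. What the paper's split buys in exchange for its less symmetric look is reusability: the same decomposition into $r_n$, $r_n'$, $\delta_n$, together with the uniform second-moment bound extracted by induction in this proof, is exactly what feeds the rate results (Theorems \ref{theol2l4} and \ref{th:cvgeqm}), which your split would not deliver as directly.
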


The obtention of the rate convergence of the averaged recursive algorithm relies on a fine control of the asymptotic behavior of the Robbins-Monro algorithms, as stated in the following proposition.
\begin{thm}\label{theol2l4}
If assumptions 1-3(b) hold, there is a positive constant $C'$, and for all $\beta \in \left( \alpha , 2 \alpha\right)$, there is a positive constant $C_{\beta}$ such that for all $n \geq 1$,
\begin{align*}
 & \mathbb{E}\left[ \left\| V_{n} - \Gamma_{m} \right\|_{F}^{2}\right] \leq \frac{C'}{n^{\alpha}}, \\
 & \mathbb{E}\left[ \left\| V_{n+1} -  \Gamma_{m} \right\|_{F}^{4}\right] \leq \frac{C''}{n^{\beta}}.
\end{align*}
\end{thm}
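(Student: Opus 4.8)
The plan is to control the non-averaged Robbins--Monro iterate $V_n$ by a quadratic Lyapunov argument, treating the substitution of the unknown centre $m$ by the estimate $\overline{m}_n$ as a lower-order perturbation, and then to bootstrap the same scheme to the fourth moment. Write $Y_{n+1}(h) = (X_{n+1}-h)(X_{n+1}-h)^T$ and let $\Phi_{n+1} = (Y_{n+1}(\overline{m}_n)-V_n)/\nrm{Y_{n+1}(\overline{m}_n)-V_n}_F$ be the unit-norm direction appearing in (\ref{def:Gammarm}), so that $V_{n+1}-\Gamma_m = (V_n-\Gamma_m)+\gamma_n\Phi_{n+1}$. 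Since $\nrm{\Phi_{n+1}}_F = 1$, since $\overline{m}_n$ is $\CF_n$-measurable, and since (\ref{def:gradV}) gives $\EFn{\Phi_{n+1}} = -\nabla G_{\overline{m}_n}(V_n)$, expanding the square and taking the conditional expectation yields
\[
\EFn{\nrm{V_{n+1}-\Gamma_m}_F^2} = \nrm{V_n-\Gamma_m}_F^2 - 2\gamma_n\scal{\nabla G_{\overline{m}_n}(V_n),V_n-\Gamma_m}_F + \gamma_n^2 .
\]
The mean field is the gradient of the population risk evaluated at the perturbed centre $\overline{m}_n$ rather than at $m$, which is the only genuinely new feature compared with the known-centre analysis of \cite{CCZ11}.

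First I would split the mean-field term as $\scal{\nabla G_m(V_n),V_n-\Gamma_m}_F + \scal{\nabla G_{\overline{m}_n}(V_n)-\nabla G_m(V_n),V_n-\Gamma_m}_F$. For the first piece, convexity of $G_m$ and $\nabla G_m(\Gamma_m)=0$ give nonnegativity globally, while the positivity of the Hessian $\nabla_m^2 G(\Gamma_m)$ provides a local lower bound $\scal{\nabla G_m(V),V-\Gamma_m}_F\ge c_V\nrm{V-\Gamma_m}_F^2$ on a neighbourhood of $\Gamma_m$; this is promoted to a usable inequality uniformly in $n$ by combining it with the strong consistency of $V_n$ from Theorem \ref{theops} and moment control of the iterates. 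For the second piece I would establish a Lipschitz-in-centre estimate $\nrm{\nabla G_h(V)-\nabla G_m(V)}_F\le C_0\nrm{h-m}$, uniform in $V$: bounding the difference of normalised increments by $2\nrm{Y(h)-Y(m)}_F/\min(\nrm{Y(h)-V}_F,\nrm{Y(m)-V}_F)$, expanding $Y(h)-Y(m)$ into terms linear and quadratic in $h-m$, and integrating against the weight $\nrm{Y(h)-V}_F^{-1}$ via Cauchy--Schwarz and the negative-moment Assumption \ref{eq:invMomentCov}(b). Cauchy--Schwarz and Young's inequality then bound the cross term by $\gamma_n\eta\nrm{V_n-\Gamma_m}_F^2 + \gamma_n\eta^{-1}C_0^2\nrm{\overline{m}_n-m}^2$ with $\eta<c_V$, so that after taking full expectations and inserting the parametric quadratic-mean rate $\EE{\nrm{\overline{m}_n-m}^2}=O(1/n)$ of \cite{CCZ11}, the perturbation contributes a term of order $\gamma_n/n = O(n^{-1-\alpha})$, which is negligible relative to the intrinsic variance $\gamma_n^2 = O(n^{-2\alpha})$ because $\alpha<1$. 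The recursion therefore reduces to the known-centre one, $\EE{\nrm{V_{n+1}-\Gamma_m}_F^2}\le(1-c'\gamma_n)\EE{\nrm{V_n-\Gamma_m}_F^2}+O(\gamma_n^2)$, and a standard stochastic-approximation induction with $\gamma_n=c_\gamma n^{-\alpha}$ gives $\EE{\nrm{V_n-\Gamma_m}_F^2}=O(\gamma_n)=O(n^{-\alpha})$.

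For the fourth moment I would square the one-step identity, that is expand $\nrm{V_{n+1}-\Gamma_m}_F^4 = (\nrm{V_n-\Gamma_m}_F^2+2\gamma_n\scal{\Phi_{n+1},V_n-\Gamma_m}_F+\gamma_n^2)^2$, take conditional expectations, and bound the higher-order increments using $\nrm{\Phi_{n+1}}_F=1$. The leading descent term again produces a contraction $-c'\gamma_n\nrm{V_n-\Gamma_m}_F^4$ from the local lower bound, the mixed terms of the form $\gamma_n^2\nrm{V_n-\Gamma_m}_F^2$ are controlled by feeding in the $L^2$ bound just obtained, and the centre perturbation is now handled with the fourth-order rate $\EE{\nrm{\overline{m}_n-m}^4}=O(1/n^2)$. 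A second stochastic-approximation induction yields $\EE{\nrm{V_{n+1}-\Gamma_m}_F^4}=O(n^{-\beta})$ for every $\beta\in(\alpha,2\alpha)$; one cannot reach the endpoint $\beta=2\alpha$ because the mixed terms force a strictly smaller exponent, a loss that is standard in fourth-moment analyses of Robbins--Monro schemes.

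The main obstacle is the coupling of the two algorithms through $\overline{m}_n$, concentrated in two technical points. First, the Lipschitz-in-centre bound must be obtained under only the weak negative-moment Assumption \ref{eq:invMomentCov}(b): the normalising denominators $\nrm{Y(h)-V}_F$ can be arbitrarily small, so the elementary bound on the difference of unit vectors has to be integrated very carefully against the square-integrable weight, and it is here that part (b) of Assumption \ref{eq:invMomentCov}, rather than merely (a), is needed. Second, $G_m$ is not globally strongly convex, since it grows only linearly at infinity, so the local quadratic lower bound must be combined with control of the iterate in the transient regime, using the consistency of Theorem \ref{theops} together with uniform moment bounds on $(V_n)$. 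These are precisely the new induction steps that allow the two bounds, with the sharp exponents $\alpha$ and (arbitrarily close to) $2\alpha$, to be closed simultaneously.
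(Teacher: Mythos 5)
Your overall architecture---a direct Lyapunov recursion for $\mathbb{E}\left[\|V_n-\Gamma_m\|_F^2\right]$, followed by a squared Lyapunov recursion for the fourth moment fed by the second---differs from the paper's, and the first step contains a genuine gap that your sketch does not repair. The claimed reduction to $\mathbb{E}\left[\|V_{n+1}-\Gamma_m\|_F^2\right]\le(1-c'\gamma_n)\mathbb{E}\left[\|V_n-\Gamma_m\|_F^2\right]+O(\gamma_n^2)$ requires the strong-convexity inequality $\left\langle \nabla G_{\overline{m}_n}(V_n)-\nabla G_{\overline{m}_n}(\Gamma_m),V_n-\Gamma_m\right\rangle_F\ge c\,\|V_n-\Gamma_m\|_F^2$ to hold in expectation with a constant $c$ independent of $n$. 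But by Proposition \ref{convexity} and Corollary \ref{corforconv} this inequality is only available when $\overline{m}_n\in\mathcal{B}(m,\epsilon)$ and $V_n$ lies in a ball $\mathcal{B}(\Gamma_m,A)$, with a constant $c_A$ that decays like $1/A$, while $\|V_n-\Gamma_m\|_F$ can be as large as $c_0 n^{1-\alpha}$. Almost sure consistency (Theorem \ref{theops}) cannot ``promote'' the local bound to a uniform one: for each fixed $n$ the bad event has positive probability, and neither a.s.\ convergence nor the uniform moment bounds of Lemma \ref{lemmajordre} make $\mathbb{P}\left[\|V_n-\Gamma_m\|_F>A\right]$ decay in $n$ for a fixed radius $A$ (that would be circular---it is what you are trying to prove). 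The only available repair is the paper's device in Lemma \ref{lem2}: truncate at a radius $n^{(1-\alpha)/p'}$ growing with $n$, control the complementary event by Markov's inequality with arbitrarily high moments, and accept that the contraction degrades to $1-c_{p'}\gamma_n n^{-(1-\alpha)/p'}$. With that degraded contraction your $L^2$ recursion equilibrates at $\gamma_n^2/\left(\gamma_n n^{-(1-\alpha)/p'}\right)=O\left(n^{-\alpha+(1-\alpha)/p'}\right)$, strictly short of the stated rate $C'/n^{\alpha}$ for every finite $p'$. So the Lyapunov route cannot deliver the first assertion of the theorem.

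This is precisely why the paper proves the $L^2$ rate by a different mechanism: it linearizes the recursion around $\Gamma_m$, writes $V_n-\Gamma_m=\beta_{n-1}(V_1-\Gamma_m)+\beta_{n-1}M_n-\beta_{n-1}R_n-\beta_{n-1}R_n'-\beta_{n-1}\Delta_n$ with $\beta_n=\prod_{k\le n}\left(I_{\mathcal{S}(H)}-\gamma_k\nabla_m^2G(\Gamma_m)\right)$, and exploits the spectral gap $\lambda_{\min}>0$ of the Hessian at the optimum, which does not degrade along the trajectory. The price is the Taylor-remainder term $\Delta_n$, whose quadratic mean involves $\sup_k\mathbb{E}\left[\|V_k-\Gamma_m\|_F^4\right]$; this is why Lemma \ref{lem1} bounds the second moment in terms of fourth moments, so the $L^2$ and $L^4$ bounds cannot be obtained sequentially as in your plan, but must be closed by a simultaneous strong induction (the paper's main argument, and what it means by ``new induction steps''). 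Two smaller points. First, the sound parts of your sketch---the Lipschitz-in-centre bound $\|\nabla G_h(V)-\nabla G_m(V)\|_F\le C_0\|h-m\|$ via Assumption 3(b), and the rates $\mathbb{E}\left[\|\overline{m}_n-m\|^{2}\right]=O(1/n)$, $\mathbb{E}\left[\|\overline{m}_n-m\|^{4}\right]=O(1/n^2)$---correspond to the paper's Lemma \ref{lem3maj} and to the results of \cite{godichon2015} (not \cite{CCZ11}). Second, your explanation of why the endpoint $\beta=2\alpha$ is unreachable (``mixed terms'') is not the true reason: with a clean contraction, the mixed term $\gamma_n^2\,\mathbb{E}\left[\|V_n-\Gamma_m\|_F^2\right]=O(n^{-3\alpha})$ would equilibrate exactly at $n^{-2\alpha}$; the loss comes from the truncation-degraded contraction $\gamma_n n^{-(1-\alpha)/p'}$, with $p'$ chosen larger and larger as $\beta$ approaches $2\alpha$.
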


The obtention of an upper bound for the rate of convergence at the order four of the Robbins-Monro algorithm is crucial in the proofs. Furthermore, the following proposition ensures that the exhibited rate  in quadratic mean is the optimal one.
\begin{prop}
Under assumptions 1-3(b), there is a positive constant $c'$ such that for all $n \geq 1$, 
\begin{align*}
\mathbb{E}\left[ \left\| V_{n} - \Gamma_{m} \right\|_{F}^{2}\right] \geq \frac{c'}{n^{\alpha}}. 
\end{align*}
\label{prop:RMVn}
\end{prop}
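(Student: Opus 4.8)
The plan is to exploit the fact that the Robbins--Monro algorithm \eqref{def:Gammarm}--\eqref{def:gammamodif} is \emph{self-normalized}. Writing $Z_n \egaldef V_n - \Gamma_m$ and
\[
\Phi_{n+1} \egaldef \frac{(X_{n+1}-\overline{m}_n)(X_{n+1}-\overline{m}_n)^T - V_n}{\nrm{(X_{n+1}-\overline{m}_n)(X_{n+1}-\overline{m}_n)^T - V_n}_F},
\]
the increment satisfies $V_{n+1}-V_n = \gamma_{n,pos}\Phi_{n+1}$ with $\nrm{\Phi_{n+1}}_F = 1$ and $\gamma_{n,pos}\le\gamma_n$. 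Expanding the square and conditioning on $\CF_n$ gives
\[
\EFn{\nrm{Z_{n+1}}_F^2} = \nrm{Z_n}_F^2 + 2\,\EFn{\scal{Z_n, \gamma_{n,pos}\Phi_{n+1}}_F} + \EFn{\gamma_{n,pos}^2}.
\]
The last term is the engine of the lower bound: because the deviations have unit norm, it stays of order $\gamma_n^2$ and forbids a decay faster than $\gamma_n = c_\gamma n^{-\alpha}$.

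First I would dispose of the positivity thresholding. The event $\{\gamma_{n,pos}<\gamma_n\}=\{\nrm{Y(\overline{m}_n)-V_n}_F<\gamma_n\}$ has conditional probability at most $\gamma_n^2\,\EFn{\nrm{Y(\overline{m}_n)-V_n}_F^{-2}}\le C\gamma_n^2$ by Markov's inequality and Assumption~\ref{eq:invMomentCov}(b); hence $\EFn{\gamma_{n,pos}^2}\ge \gamma_n^2(1-C\gamma_n^2)$, and the discrepancy it creates in the cross term is $O(\gamma_n^3\nrm{Z_n}_F)$, negligible before $\gamma_n^2$. For the cross term proper, independence of $X_{n+1}$ from $\CF_n$ and \eqref{def:gradV} give $\EFn{\Phi_{n+1}}=-\nabla G_{\overline{m}_n}(V_n)$, so I would split
\[
\scal{Z_n, \EFn{\Phi_{n+1}}}_F = -\scal{Z_n, \nabla G_m(V_n)}_F - \scal{Z_n, \nabla G_{\overline{m}_n}(V_n)-\nabla G_m(V_n)}_F.
\]
Since $\nabla G_m(\Gamma_m)=0$ and, by \eqref{def:HeV} and Assumption~\ref{eq:invMomentCov}(a), the Hessian satisfies $0\preceq\nabla_m^2 G\preceq C\, I_{S(H)}$, the mean value form yields $0\le \scal{Z_n, \nabla G_m(V_n)}_F\le C\nrm{Z_n}_F^2$. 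The second term is controlled by the Lipschitz dependence of $\nabla G_h$ on $h$, namely $\nrm{\nabla G_{\overline{m}_n}(V_n)-\nabla G_m(V_n)}_F\le L\nrm{\overline{m}_n-m}$; after taking full expectations, Cauchy--Schwarz together with the upper bound $\EE{\nrm{Z_n}_F^2}\le C'/n^\alpha$ of Theorem~\ref{theol2l4} and the parametric rate $\EE{\nrm{\overline{m}_n-m}^2}=O(1/n)$ of the averaged median estimator gives a contribution of order $\gamma_n\, n^{-\alpha/2}n^{-1/2}=O(n^{-(3\alpha+1)/2})$, which is $o(\gamma_n^2)$ precisely because $\alpha<1$.

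Collecting these estimates, and writing $a_n\egaldef\EE{\nrm{Z_n}_F^2}$, for $n$ large enough I obtain the one-sided recursion
\[
a_{n+1} \ge \left(1-2C\gamma_n\right)a_n + \tfrac12\gamma_n^2 .
\]
It then remains to analyse this recursion. Its equilibrium is $\gamma_n/(4C)$, and a standard comparison argument closes the proof: if $a_n\ge \gamma_n/(4C)$ then $a_{n+1}\ge \gamma_n/(4C)\ge\gamma_{n+1}/(4C)$, while whenever $a_n<\gamma_n/(4C)$ the recursion forces $a_{n+1}>a_n$; hence $\liminf_n n^\alpha a_n\ge c_\gamma/(4C)>0$. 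Since $a_n>0$ for every fixed $n$ (the law of $Y(\overline{m}_n)$ has no atom under Assumption~\ref{eq:invMomentCov}), adjusting the constant over the finitely many initial indices yields $a_n\ge c'/n^\alpha$ for all $n\ge1$.

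The main obstacle is the control of the perturbation created by replacing the unknown median $m$ by its recursive estimate $\overline{m}_n$: one must show that this coupling term, and also the bias introduced by the thresholding $\gamma_{n,pos}$, are genuinely of smaller order than the driving term $\gamma_n^2$. Both are tamed only thanks to the rates already established — the quadratic-mean upper bound of Theorem~\ref{theol2l4} and the $O(1/n)$ rate for $\overline{m}_n$ — and to Assumption~\ref{eq:invMomentCov}(b), which keeps $\nrm{Y(\overline{m}_n)-V_n}_F$ away from zero with overwhelming probability so that the self-normalizing second-order term indeed contributes $\gamma_n^2(1-o(1))$.
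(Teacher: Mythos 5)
Your overall scheme is sound, and it is the natural one given the paper's toolbox: with the step $\gamma_n$ the increment of the Robbins--Monro recursion has Frobenius norm exactly $\gamma_n$, so squaring and conditioning on $\CF_n$ injects a term $\gamma_n^2$ at every iteration; the drift term is at most $2C\gamma_n\,\EE{\nrm{V_n-\Gamma_m}_F^2}$ because every Hessian $\nabla_h^2G$ has spectrum contained in $[0,C]$ (Proposition \ref{convexity}); the coupling terms are negligible thanks to the rates already established (Theorem \ref{theol2l4} and the $O(1/n)$ quadratic-mean rate for $\overline{m}_n$); and your comparison argument around the equilibrium $\gamma_n/(4C)$ of the one-sided recursion $a_{n+1}\geq(1-2C\gamma_n)a_n+\tfrac12\gamma_n^2$ is correct. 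Two remarks on scope: Proposition \ref{prop:RMVn} concerns the plain algorithm (\ref{def:Gammarm}), not the thresholded variant (\ref{def:gammamodif}), so all of the $\gamma_{n,pos}$ bookkeeping can simply be deleted (you handle it correctly, but it is not needed); and the positivity of $a_n$ needed to absorb the initial indices follows, for $n$ past the rank where the recursion holds, from the recursion itself ($a_{n+1}\geq\tfrac12\gamma_n^2$ once $2C\gamma_n\leq1$), while for small $n$ it requires Assumption \ref{eq:supportCdtnCov} (the law of $Y$ is not carried by a ray), not merely the absence of atoms.

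The one step that fails as written is the claimed bound $\nrm{\nabla G_{\overline{m}_n}(V_n)-\nabla G_m(V_n)}_F\leq L\nrm{\overline{m}_n-m}$ with a \emph{deterministic} constant $L$. The argument that proves Lemma \ref{lem3maj} yields, for a general anchor point $V$, a constant of the form $4\bigl(\sqrt{C}+C\sqrt{\nrm{V}_F}\bigr)$, which grows like $\sqrt{\nrm{V}_F}$; and with the only deterministic control available, $\nrm{V_n}_F=O(n^{1-\alpha})$, the corresponding error term is of order $\gamma_n\sqrt{a_n}\,n^{(1-\alpha)/2}n^{-1/2}\asymp\gamma_n^2$, i.e.\ of the \emph{same} order as the driving term, so your negligibility claim collapses at exactly this point. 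Two repairs are possible. Either keep your split and pay for the random Lipschitz constant with a three-factor H\"older inequality, using the boundedness of all moments of $\nrm{V_n-\Gamma_m}_F$ (Lemma \ref{lemmajordre}) together with the fourth-moment rate $\mathbb{E}\bigl[\nrm{V_n-\Gamma_m}_F^4\bigr]\leq C_\beta n^{-\beta}$ of Theorem \ref{theol2l4} with $\beta$ close to $2\alpha$ (this works precisely because $\alpha<1$). Or, more simply, split the drift as the paper does in (\ref{decxi}), through $\nabla G_{\overline{m}_n}(\Gamma_m)$ rather than through $\nabla G_m(V_n)$: the convexity and Hessian bounds then apply to $\scal{V_n-\Gamma_m,\nabla G_{\overline{m}_n}(V_n)-\nabla G_{\overline{m}_n}(\Gamma_m)}_F$, and the coupling term becomes exactly $r_n=\nabla G_{\overline{m}_n}(\Gamma_m)-\nabla G_m(\Gamma_m)$, for which Lemma \ref{lem3maj} gives $\nrm{r_n}_F\leq 4\bigl(\sqrt{C}+C\sqrt{\nrm{\Gamma_m}_F}\bigr)\nrm{\overline{m}_n-m}$ with a deterministic constant, anchored at the fixed point $\Gamma_m$. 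With either repair, your Cauchy--Schwarz estimate $\gamma_n\,n^{-\alpha/2}n^{-1/2}=o(\gamma_n^2)$ and the remainder of your argument go through.
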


Finally, the following theorem is the most important theoretical result of this work. It shows that, in spite of the fact that it only considers the observed data one by one, the averaged recursive estimation procedure gives an estimator which has a classical parametric $\sqrt{n}$ rate of convergence in the Hilbert-Schmidt norm. 
\begin{thm}\label{th:cvgeqm}
Under Assumptions 1-3(b), there is a positive constant $K'$ such that for all $n \geq 1$,
\begin{align*}
\mathbb{E}\left[ \left\| \overline{V}_{n} - \Gamma_{m} \right\|_{F}^{2}\right] &\leq \frac{K'}{n}.
\end{align*}
\end{thm}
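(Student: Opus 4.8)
The plan is to follow the Polyak--Juditsky averaging strategy, rewriting the Robbins--Monro recursion (\ref{def:Gammarm}) in the form
\[
V_{n+1} - V_n = -\gamma_n \nabla G_{\overline{m}_n}(V_n) + \gamma_n \xi_{n+1},
\]
where $\xi_{n+1} \egaldef \Phi_{n+1} - \EFn{\Phi_{n+1}}$ is a martingale increment, $\Phi_{n+1}$ denotes the normalized direction appearing in (\ref{def:Gammarm}), and $\EFn{\Phi_{n+1}} = -\nabla G_{\overline{m}_n}(V_n)$ by (\ref{def:gradV}). Setting $Z_n \egaldef V_n - \Gamma_m$ and $\mathcal{H} \egaldef \nabla_m^2 G(\Gamma_m)$, I would split the drift as $\nabla G_{\overline{m}_n}(V_n) = \mathcal{H} Z_n + r_n + \delta_n$, where $r_n \egaldef \nabla G_m(V_n) - \mathcal{H} Z_n$ is the second-order Taylor remainder of $\nabla G_m$ at its zero $\Gamma_m$ (recall $\nabla G_m(\Gamma_m)=0$), and $\delta_n \egaldef \nabla G_{\overline{m}_n}(V_n) - \nabla G_m(V_n)$ is the bias coming from replacing the unknown median $m$ by its recursive estimate $\overline{m}_n$. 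Isolating $\mathcal{H} Z_n$, averaging from $k=1$ to $n$, and using that $\mathcal{H}$ is invertible with $\nrm{\mathcal{H}^{-1}}\le 1/\lmin$ (Assumption~\ref{eq:supportCdtnCov} guarantees $\lmin>0$), one obtains
\[
\mathcal{H}\,\overline{Z}_n = -\frac{1}{n}\sum_{k=1}^n \frac{Z_{k+1}-Z_k}{\gamma_k} + \frac{1}{n}\sum_{k=1}^n \xi_{k+1} - \frac{1}{n}\sum_{k=1}^n r_k - \frac{1}{n}\sum_{k=1}^n \delta_k,
\]
with $\overline{Z}_n \egaldef \overline{V}_n - \Gamma_m$, and it remains to bound each of the four terms in $\lDeux$.

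The martingale term is the dominant one and produces the announced rate: since $\nrm{\Phi_{n+1}}_F=1$ by construction one has $\nrm{\xi_{k+1}}_F\le 2$, so orthogonality of martingale increments yields $\EE{\nrm{n^{-1}\sum_{k=1}^n \xi_{k+1}}_F^2}\le 4/n$. The telescoping term is handled by Abel summation: the boundary contribution is of order $n^{\alpha-1}\nrm{Z_{n+1}}_{\lDeux}$ and the bulk involves $\gamma_k^{-1}-\gamma_{k-1}^{-1}=\Oh{k^{\alpha-1}}$; plugging in the bound $\EE{\nrm{Z_k}_F^2}\le C'/k^{\alpha}$ of Theorem~\ref{theol2l4} gives a contribution of order $n^{\alpha/2-1}=o(n^{-1/2})$ since $\alpha<1$. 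For the remainder term I would use the estimate $\nrm{r_k}_F\le C\nrm{Z_k}_F^2$ on the rest of the Taylor expansion of $\nabla G_m$ (valid under Assumption~\ref{eq:invMomentCov}), so that $\nrm{r_k}_{\lDeux}\le C\,\EE{\nrm{Z_k}_F^4}^{1/2}$; here the fourth-order bound $\EE{\nrm{Z_k}_F^4}\le C''/k^{\beta}$ of Theorem~\ref{theol2l4} is essential, and since $\alpha>1/2$ one may choose $\beta\in(1,2\alpha)$, which makes the averaged remainder of order $n^{-\beta/2}=o(n^{-1/2})$. Finally, for the bias term I would establish a Lipschitz-in-center bound $\nrm{\delta_k}_F\le C\nrm{\overline{m}_k-m}$ and combine it with the parametric rate $\EE{\nrm{\overline{m}_k-m}^2}\le C/k$ of the averaged median estimator (\ref{def:medaver}) known from \cite{CCZ11}; this gives $\nrm{\delta_k}_{\lDeux}\le C k^{-1/2}$, hence an averaged contribution of order $n^{-1/2}$.

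Collecting the four estimates and using $\nrm{\mathcal{H}^{-1}}\le 1/\lmin$, the three non-martingale terms are $\Oh{n^{-1/2}}$ (two of them $o(n^{-1/2})$) while the martingale term is exactly $\Oh{n^{-1/2}}$, so $\EE{\nrm{\overline{Z}_n}_F^2}^{1/2}=\Oh{n^{-1/2}}$, which is the claimed bound $\EE{\nrm{\overline{V}_n-\Gamma_m}_F^2}\le K'/n$. I expect the main obstacle to be the control of the bias term $\delta_k$, the genuinely new feature compared to the known-median analysis of \cite{CCZ11}: one must show that $h\mapsto \nabla G_h(V)$ is Lipschitz near $m$ uniformly in $V$ using only the inverse-moment Assumption~\ref{eq:invMomentCov} — the normalization in (\ref{def:gradV}) is what keeps this derivative integrable even when $X$ has no moments — and one must then feed in the averaged median's own $1/n$ rate. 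A secondary but equally delicate point, already flagged by the remark following Theorem~\ref{theol2l4}, is that the remainder sum can be pushed below $n^{-1/2}$ only thanks to the fourth-order rate with a flexible exponent $\beta>1$, available precisely because $\alpha>1/2$. I would also note that the thresholded step (\ref{def:gammamodif}) coincides with $\gamma_n$ except on events where $\nrm{\cdot}_F<\gamma_n\to0$, whose contribution is negligible at the $n^{-1/2}$ scale under Assumption~\ref{eq:invMomentCov}, so the modification does not affect the rate.
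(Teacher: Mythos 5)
Your proof follows essentially the same route as the paper's: the same Pelletier-type identity obtained by isolating $\nabla_m^2G(\Gamma_m)(V_k-\Gamma_m)$ in the linearized recursion and averaging, Abel summation for the telescoping term, martingale orthogonality for the noise term (the dominant $O(1/n)$ contribution), the fourth-moment rate of Theorem~\ref{theol2l4} with $\beta>1$ for the quadratic Taylor remainder, the parametric rate of $\overline{m}_n$ for the bias, and finally inversion of $\nabla_m^2G(\Gamma_m)$, whose smallest eigenvalue is positive (in the paper this is Proposition~\ref{convexity}, proved under all of Assumptions 1--3(b), not a direct consequence of Assumption~\ref{eq:supportCdtnCov} alone). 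The only substantive difference is the grouping of remainders: you use two terms, $r_k=\nabla G_m(V_k)-\nabla_m^2G(\Gamma_m)(V_k-\Gamma_m)$ and $\delta_k=\nabla G_{\overline{m}_k}(V_k)-\nabla G_m(V_k)$, while the paper uses three (bias at the fixed point $\Gamma_m$, Hessian difference applied to $V_k-\Gamma_m$, Taylor remainder at center $\overline{m}_k$); the two decompositions sum to the same quantity. Your grouping has one consequence that you flag but state too strongly: a bound $\|\delta_k\|_F\le C\|\overline{m}_k-m\|$ with $C$ uniform in $V_k$ is not what the natural argument delivers --- differentiating $t\mapsto\mathbb{E}\bigl[(Y(m+th)-V)/\|Y(m+th)-V\|_F\bigr]$ as in the proof of Lemma~\ref{lem3maj} produces a constant of order $1+\sqrt{\|V_k\|_F}$, which is precisely why the paper evaluates its bias term at $\Gamma_m$ (deterministic constant $4(\sqrt{C}+C\sqrt{\|\Gamma_m\|_F})$) and pushes all $V_k$-dependence into terms it then controls by Cauchy--Schwarz. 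Your version closes the same way, with tools the paper already provides: bound $\|\delta_k\|_F\le C(1+\sqrt{\|V_k\|_F})\|\overline{m}_k-m\|$, apply Cauchy--Schwarz, and use the uniform moment bounds on $V_k$ (Lemma~\ref{lemmajordre}) together with $\mathbb{E}[\|\overline{m}_k-m\|^4]=O(k^{-2})$ from \cite{godichon2015}; this still yields $\bigl(\mathbb{E}[\|\delta_k\|_F^2]\bigr)^{1/2}=O(k^{-1/2})$ and hence the $O(n^{-1/2})$ averaged contribution. So there is no gap, only an understated lemma; the paper's three-term split buys deterministic constants in the Lipschitz bound at the price of an extra Hessian-difference term, while yours is more compact but shifts the moment bookkeeping into the bias term.
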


Assuming the eigenvalues of $\Gamma_m$ are of multiplicity one, it can be deduced from Theorem~\ref{th:cvgeqm}  and   Lemma 4.3 in \cite{Bosq}, the convergence in quadratic mean of the eigenvectors of $\overline{V}_{n}$ towards the corresponding (up to sign) eigenvector of $\Gamma_m$ .

\section{An illustration on simulated and real data}

A small comparison with other classical robust PCA techniques is performed in this section considering data in relatively high dimension but samples with moderate sizes. This permits to compare our approach with classical robust PCA techniques, which are generally not designed to deal with large samples of high dimensional data. In our comparison, we have employed the following well known robust techniques: robust projection pursuit  (see \cite{CR-G2005} and \cite{CFO2007}), minimum covariance determinant  (MCD, see \cite{RvD99}) and spherical PCA (see \cite{LMSTZC1999}). 
The computations were made in the R language  (\cite{R10}), with the help of packages \texttt{pcaPP} and \texttt{rrcov}. For reproductible research, our codes  for computing the MCM have been posted on CRAN in the \texttt{Gmedian} package. We will denote by MCM(R) the recursive estimator $\overline{V}_{n}$ defined in (\ref{def:Gammamedaver}) and MCM(R+) its non negative modification whose learning weights are defined in (\ref{def:gammamodif}).

If the size of the data $n \times d$ is not too large, an effective way for estimating $\Gamma_m$ is to employ  Weiszfeld's algorithm (see \cite{Weiszfeld1937} and \cite{VZ00} as well the  Supplementary file  for a description of the algorithms in our particular situation). The estimate obtained thanks to Weiszfeld's algorithm is denoted by MCM(W) in the following. Note that other optimization algorithms which may be preferred in small dimension  (see  \cite{FFC2012}) have not been considered here since they would require the computation of the Hessian matrix, whose size is $d^4$, and this would lead to much slower algorithms. Note finally that all these alternative algorithms do not admit a natural updating scheme when the data arrive sequentially so that they should be completely ran again at each new observation.
 

\subsection{Simulation protocol}

Independent realizations of  a random variable $Y \in \mathbb{R}^d$ are drawn, where 
\begin{eqnarray}
Y &=& (1-O(\delta)) X + O(\delta) \epsilon,
\label{def:melange}
\end{eqnarray}
is a mixture of two distributions and $X, O$ and $\epsilon$ are independent random variables.  The random vector $X$ has a centered Gaussian distribution in $\mathbb{R}^d$ with covariance matrix $[\Sigma]_{\ell, j} = \min (\ell,j)/d$ and can be thought as a discretized version of a Brownian sample path in $[0,1]$. The multivariate contamination comes from $\epsilon$, with different rates of contamination controlled by the Bernoulli variable $O(\delta)$, independent from $X$ and $\epsilon$, with $\mathbb{P}(O(\delta) =1) =  \delta$ and $\mathbb{P}(O(\delta) =0) =  1-\delta$. Three different scenarios (see Figure~\ref{fig:traj}) are considered for the distribution of $\epsilon$:
\begin{itemize}
\item The elements of vector $\epsilon$ are $d$ independent realizations of a  Student $t$ distribution with one degree of freedom. This means that the first moment of $Y$ is not defined when $\delta>0$.
\item The elements of vector $\epsilon$ are $d$ independent realizations of a  Student $t$ distribution with two degrees of freedom. This means that the second  moment of $Y$ is not defined when $\delta>0$.
\item The vector $\epsilon$ is distributed as a "reverse time" Brownian motion. It has a Gaussian centered distribution, with covariance matrix  $[\Sigma_\epsilon]_{\ell, j} = 2\min (d-\ell,d-j)/d$. The covariance matrix of $Y$ is $(1-\delta) \Sigma + \delta \Sigma_\epsilon$.
\end{itemize}

\begin{figure}
\begin{center}
\hspace*{-2cm}
\includegraphics[height=9cm,width=18cm]{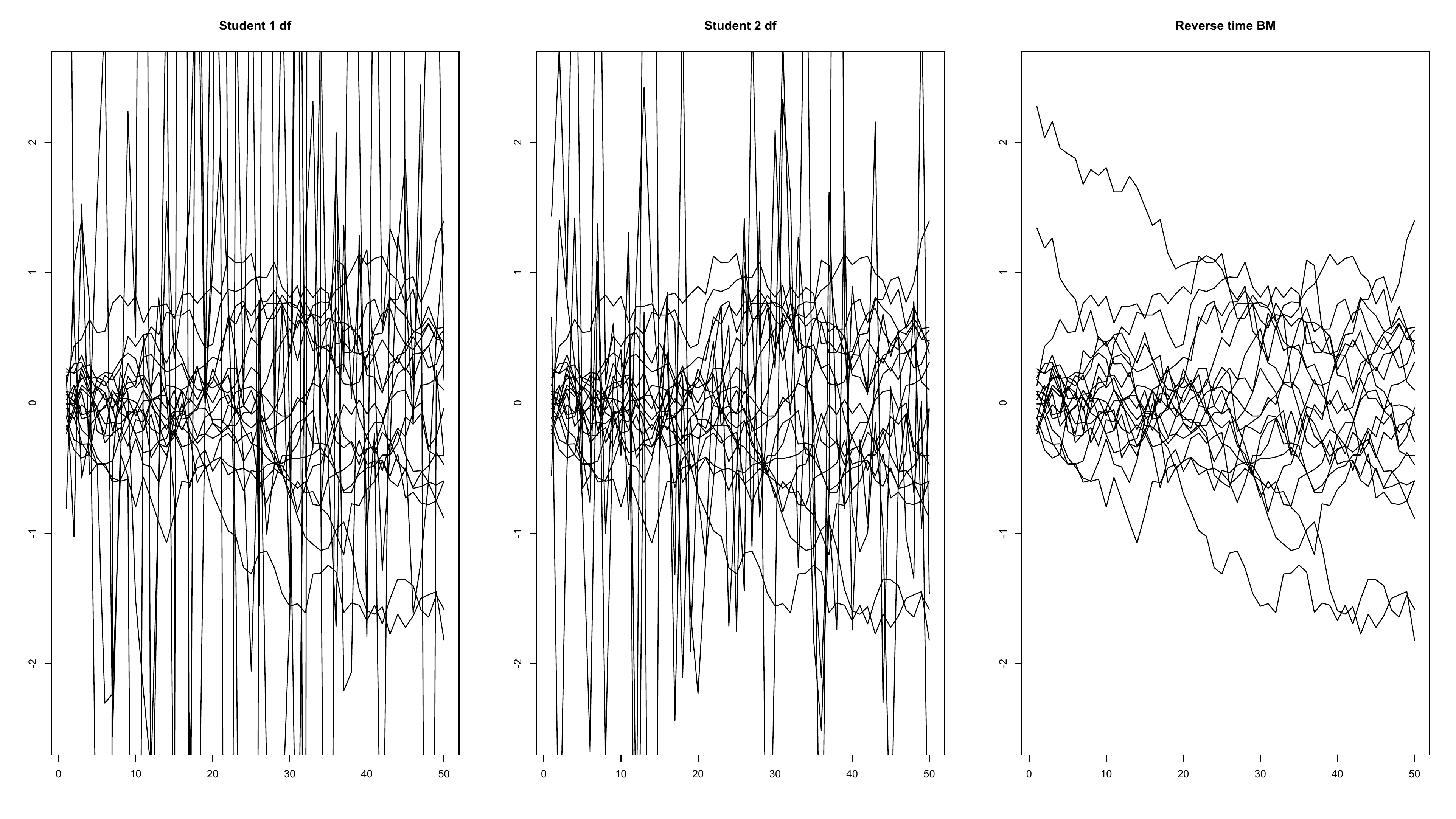}%
 \caption{A sample of $n=20$ trajectories when $d=50$ and $\delta=0.10$ for the three different contamination scenarios: Student $t$ with 1 degree of freedom, Student $t$ with 2 degrees of freedom and reverse time Brownian motion (from left to right).}
 \label{fig:traj}
 \end{center}
\end{figure}

 For the averaged recursive algorithms, we have considered  tuning coefficients $c_m=c_\gamma = 2$ and a speed rate of $\alpha=3/4$. Note that the values of these tuning parameters have not been particularly optimised. We have noted that the simulation results were very stable, and did not depend much on the value of $c_m$ and $c_\gamma$ for $c_m, c_\gamma \in [1,20]$.

The  estimation error of the eigenspaces associated to the largest eigenvalues is evaluated by considering the squared Frobenius norm between the associated orthogonal projectors. 
Denoting by $\mathbf{P}_q$ the orthogonal projector onto the space generated by the $q$ eigenvectors of the covariance matrix $\Sigma$ associated to the $q$ largest eigenvalues and by $\widehat{\mathbf{P}}_q$ an estimation, we consider the following loss criterion, 
\begin{align}
R(\widehat{\mathbf{P}}_q, \mathbf{P}_q) &= \mbox{tr} \left[ \left( \widehat{\mathbf{P}}_q - \mathbf{P}_q \right)^T \left( \widehat{\mathbf{P}}_q - \mathbf{P}_q \right) \right] \nonumber \\
 &= 2 q - 2 \mbox{tr} \left[ \widehat{\mathbf{P}}_q \mathbf{P}_q \right].
\label{def:errvecp} 
\end{align}
Note that we always have  $R(\widehat{\mathbf{P}}_q, \mathbf{P}_q) \leq 2q$ and $R(\widehat{\mathbf{P}}_q, \mathbf{P}_q)=2q$ means that the  eigenspaces generated by the true and the estimated eigenvectors are orthogonal.

\subsection{Comparison with classical robust PCA techniques}

\begin{figure}
\includegraphics[height=10cm,width=15.5cm]{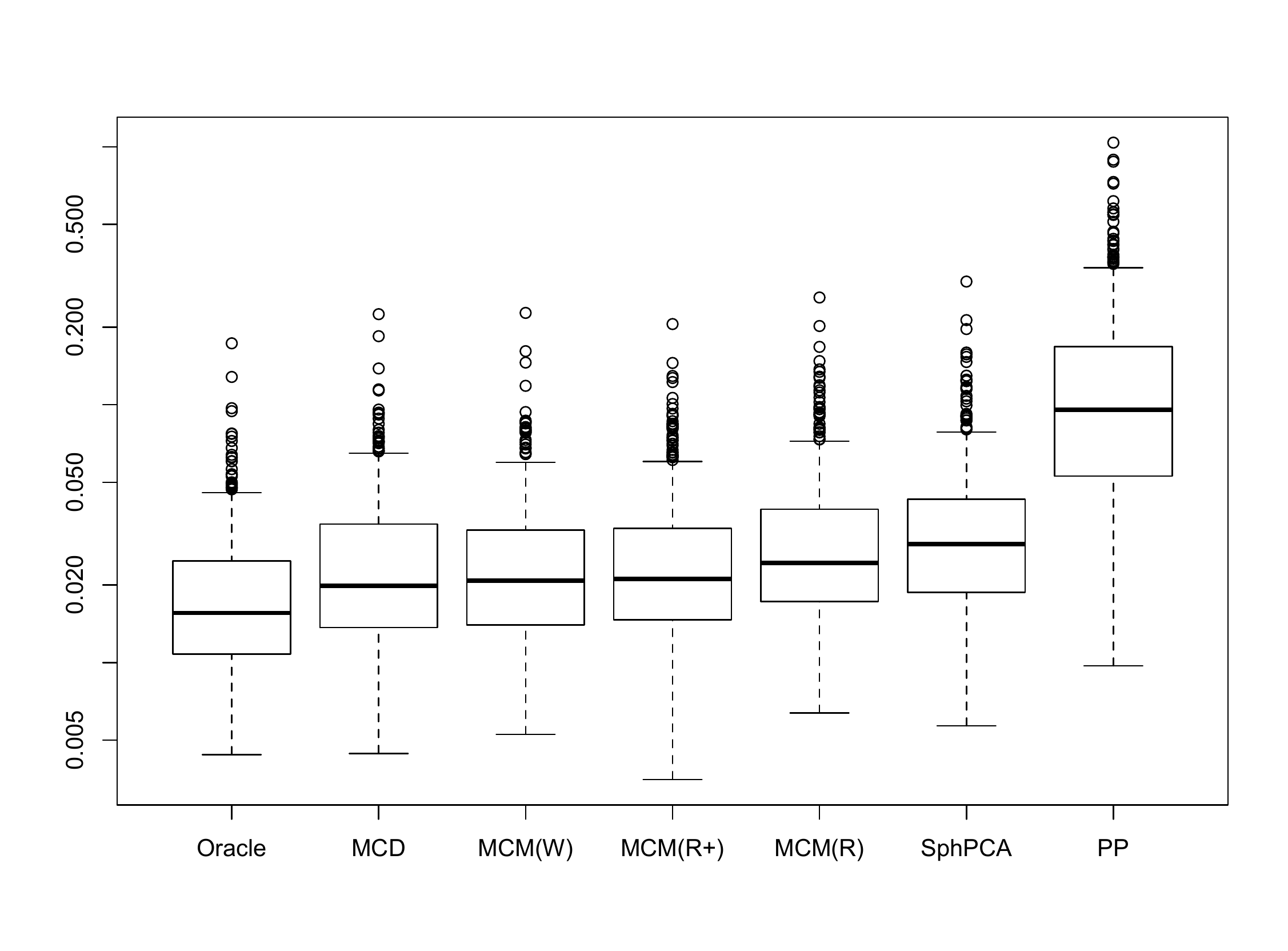}
 \caption{Estimation errors (at a logarithmic scale) over 500 Monte Carlo replications, for $n=200$, $d=50$  with no contamination ($\delta=0$). MCM(W) stands for the estimation performed by the Weiszfeld's algorithm whereas MCM(R) denotes the averaged recursive approach and MCM(R+) its non negative modification (see equation \ref{def:gammamodif}).}
 \label{fig:boxerr0}
  \end{figure}

\begin{figure}
\includegraphics[height=10cm,width=15.5cm]{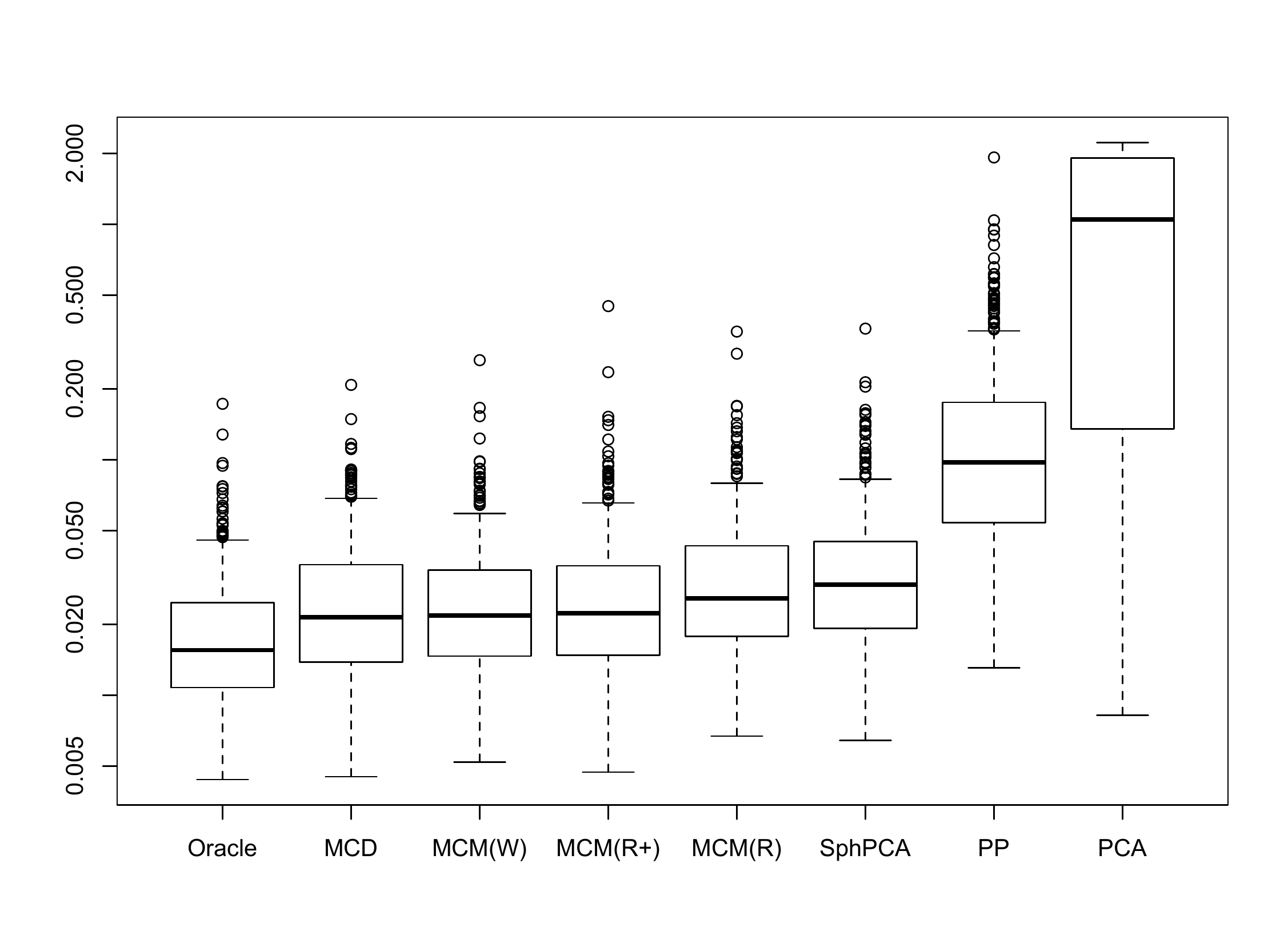}
 \caption{Estimation errors (at a logarithmic scale) over 500 Monte Carlo replications, for $n=200$, $d=50$ and a contamination by a $t$ distribution with 2 degrees of freedom  with $\delta=0.02$. MCM(W) stands for the estimation performed by the Weiszfeld's algorithm whereas MCM(R) denotes the averaged recursive approach and MCM(R+), its non negative modification with learning steps as in (\ref{def:gammamodif}).}
 \label{fig:boxerr}
  \end{figure}

We first compare the performances of  the two estimators of the  MCM based on the Weiszfeld's algorithm and the recursive algorithms  (see (\ref{def:Gammamedaver})) with more classical robust PCA techniques.

We generated  samples of $Y$ with size $n=500$  and dimension $d \in \{50,200\}$, over 500 replications. Different levels of contamination are considered : $\delta \in \{0, 0.02, 0.05, 0.10, 0.20 \}$. For both dimensions $d =50$ and $d=200$, the first eigenvalue of the covariance matrix of $X$ represents about 81 \% of the total variance, and the second one about 9 \%. 

 \begin{table}[t]
   \centering
   \small
   \begin{tabular}{cccccccc} \\
 & PCA & MCD & MCM(W) & MCM(R+) & MCM(R) &   SphPCA & PP \\
d=50 & 0.0156 & 0.0199 & 0.0208 & 0.0211& 0.0243 & 0.0287 & 0.0955\\
 d=200 &  0.0148 & - & 0.0200& 0.0209& 0.0246& 0.0275& 0.0895 \\ 
       \end{tabular}
   \caption{Median estimation errors, according to criterion $R(\widehat{\mathbf{P}}_q, \mathbf{P}_q) $ with a dimension $q=2$, for non contaminated  samples of  size $n=200$, over 500 Monte Carlo experiments.}
   \label{tab:summary_mu0}
  \end{table}

The median errors of estimation of the eigenspace generated by the first two eigenvectors ($q=2$), according to criterion (\ref{def:errvecp}), are given in Table~\ref{tab:summary_mu0} for non contaminated data ($\delta=0$). The distribution of the  estimation error $R(\widehat{\mathbf{P}}_q, \mathbf{P}_q)$ is drawn for the different approaches in Figure~\ref{fig:boxerr0} when the dimension is not large ($d=50$). As expected, the "Oracle", which is the classical PCA in this situation, provides the best estimations of the eigenspaces. Then, the MCD and the median covariation matrix, estimated by the Weiszfeld algorithm or the modified MCM(R+) recursive estimator, behave well and similarly. Note that when the dimension gets larger, the MCD cannot be used anymore and the MCM is the more effective robust estimator of the eigenspaces.

When the data are contaminated, the median errors of estimation of the eigenspace generated by the first two eigenvectors ($q=2$), according to criterion (\ref{def:errvecp}), are given in Table~\ref{tab:summary_mu}. In Figure~\ref{fig:boxerr}, the distribution of the  estimation error $R(\widehat{\mathbf{P}}_q, \mathbf{P}_q)$ is drawn for the different approaches.

 \begin{table}
   \centering
   \small
   \begin{tabular}{clcccccc} \\
  \multicolumn{2}{c}{}   & $t$ 1 df & $t$ 2 df & inv. B. &  $t$ 1 df & $t$ 2 df & inv. B.  \\
 $\delta$ & Method  & \multicolumn{3}{c}{d  = 50} & \multicolumn{3}{c}{d  = 200} \\
    2\%  &  PCA &  3.13&  1.04 &  0.698  &   3.95  &  1.87  &   0.731  \\ 
              &  PP &  0.086 &  0.097 &  0.090   &   0.085   &  0.094  &   0.084    \\
              &MCD &  0.022 &  0.021 &  0.021   &  --   & --   &   --   \\
       &Sph. PCA &  0.028 &  0.029 &   0.027  &   0.027   &  0.030  &   0.028    \\
&MCM (Weiszfeld)&  0.021 &  0.021 &  0.021   &   0.021   &  0.022   &   0.022    \\
              &MCM (R+) &  0.022 &  0.022 &  0.024   &   0.023   &  0.023   &   0.025    \\
              &MCM (R) &  0.026 &  0.025 &  0.027   &   0.026   &  0.027   &   0.028    \\
    5\%  &  PCA &  3.82 &  1.91 &  0.862  &   3.96  &  1.98  &   0.910  \\ 
              &  PP &  0.090 &  0.103 &  0.093   &   0.089  &  0.098   &   0.087   \\
              &MCD &  0.022 &  0.023 &  0.021   &   --   &  --   &   --    \\
         &Sph. PCA &  0.029 &  0.031 &  0.033   &   0.029   & 0.031    &   0.034    \\
      &MCM (Weiszfeld)&  0.023 &  0.023 &  0.028   &   0.022   &  0.023  &   0.030    \\
              &MCM (R+) &  0.025 &  0.024 &  0.035   &   0.024   &  0.024   &   0.039    \\
              &MCM (R) &  0.029 &  0.027 &  0.037   &   0.028   &  0.028   &   0.040    \\
    10\%  &  PCA &  3.83 &  1.96 &  1.03  &   3.96  &   1.99  &   1.10 \\ 
              &  PP &  0.107 &  0.108 &  0.099  &   0.088  &  0.101   &   0.097    \\
              &MCD & 0.023 &  0.022 &  0.023   &   --   &  --  &   --   \\
              &Sph. PCA &  0.033 &  0.033 &   0.054  &   0.031   &  0.033   &   0.057   \\
    &MCM (Weiszfeld)&  0.025 &  0.026 &  0.059   &   0.023   &  0.024   &   0.056    \\
              &MCM (R+) &  0.030 &  0.027 & 0.089   &   0.027   &  0.027   &   0.086    \\
              &MCM (R) &  0.035 &  0.032 & 0.088   &   0.032   &  0.031   &   0.086    \\
    20\%  &  PCA &  3.84 &  2.02 &  1.19  &   3.96   &  2.01   &   1.25  \\ 
              &  PP &  0.110 &  0.135 &  0.138   &   0.091   &  0.122   &   0.137    \\
              &MCD &  0.025 &  0.026 &  0.026   &  --   & --  &   --    \\
              &Sph. PCA &  0.037 &  0.038 &  0.140   &   0.034  &  0.037   & 0.150      \\
     &MCM (Weiszfeld)&  0.030 &  0.030 &  0.174   &   0.026  &  0.028  &   0.181    \\
              &MCM (R+) &  0.044  &  0.036 &  0.255   &   0.038   &  0.032   &   0.256 \\ 
               &MCM (R) &  0.050  &  0.041 &  0.251   &   0.042   &  0.037   &   0.256 \\ 
       \end{tabular}
   \caption{Median estimation errors, according to criterion $R(\widehat{\mathbf{P}}_q, \mathbf{P}_q) $ with a dimension $q=2$, for datasets with a sample size $n=200$, over 500 Monte Carlo experiments.}
   \label{tab:summary_mu}
  \end{table}

We can make the following remarks. At first note that even when the level of contamination is small (2\% and 5\%), the performances of classical PCA are strongly affected by the presence of outlying values in such (large) dimensions.  When $d=50$, the MCD algorithm and the MCM estimation provide the best estimations of the original two dimensional eigenspace, whereas when $d$ gets larger ($d=n=200$), the MCD estimator can not be used anymore (by construction)  and the MCM estimators, obtained with Weiszfeld's and the non negative recursive algorithm, remain the most accurate. We can also remark that the recursive MCM algorithms, which are designed to deal with very large samples, performs well even for such moderate sample sizes (see also Figure~\ref{fig:boxerr}).  The modification of the descent step suggested in (\ref{def:gammamodif}), which corresponds to estimator MCM(R+), permits to improve the accuracy the initial MCM estimator, specially when the noise level is not small.  
The performances of the spherical PCA are slightly less accurate whereas the median error of the robust PP is always the largest among the robust estimators. 
When, the contamination is highly structured temporally and the level of contamination is not small (contamination by a reverse time Brownian  motion, with $\delta=0.20$), the behavior of the MCM is different from the other robust estimators and, with our criterion, it can appear as less effective. However, one can think that we are in presence of two different populations with completely different multivariate correlation structure and the MCD completely ignores that part of the data, which is not necessarily a better behavior. 


\subsection{Online estimation of the principal components}

We now consider an experiment in high dimension, $d=1000$, and evaluate the ability of the recursive algorithms defined in (\ref{algo:vectp}) to estimate recursively the eigenvectors of $\Gamma_m$ associated to the largest eigenvalues. Note that due to the high dimension of the data and limited computation time, we only make comparison of the recursive robust techniques with the classical PCA. 
For this we generate growing samples  and compute, for each sample size  the approximation error of the different (fast) strategies to the true eigenspace generated by the $q$ eigenvectors associated to the $q$ largest eigenvalues of $\Gamma_m$.

We have drawn in Figure~\ref{fig:evolR}, the evolution of the mean (over 100 replications) approximation error $R(\mathbf{P}_q,\hat{\mathbf{P}}_q)$, for a dimension $q=3$, as a function of the sample size for samples contaminated by a 2 degrees of freedom Student $t$ distribution with a rate $\delta=0.1$. 
 An important fact is that the recursive algorithm which approximates recursively  the eigenelements behaves very well and we can see nearly no difference between the spectral decomposition of $\overline{V}_n$ (denoted by MCM in Figure \ref{fig:evolR}) and the estimates produced with the sequential algorithm (\ref{algo:vectp}) for sample sizes larger than a few hundreds. We can also note that the error made by the classical PCA is always very high and does not decrease with the sample size.

 \begin{figure}
   \begin{center}
 \includegraphics[width=13cm]{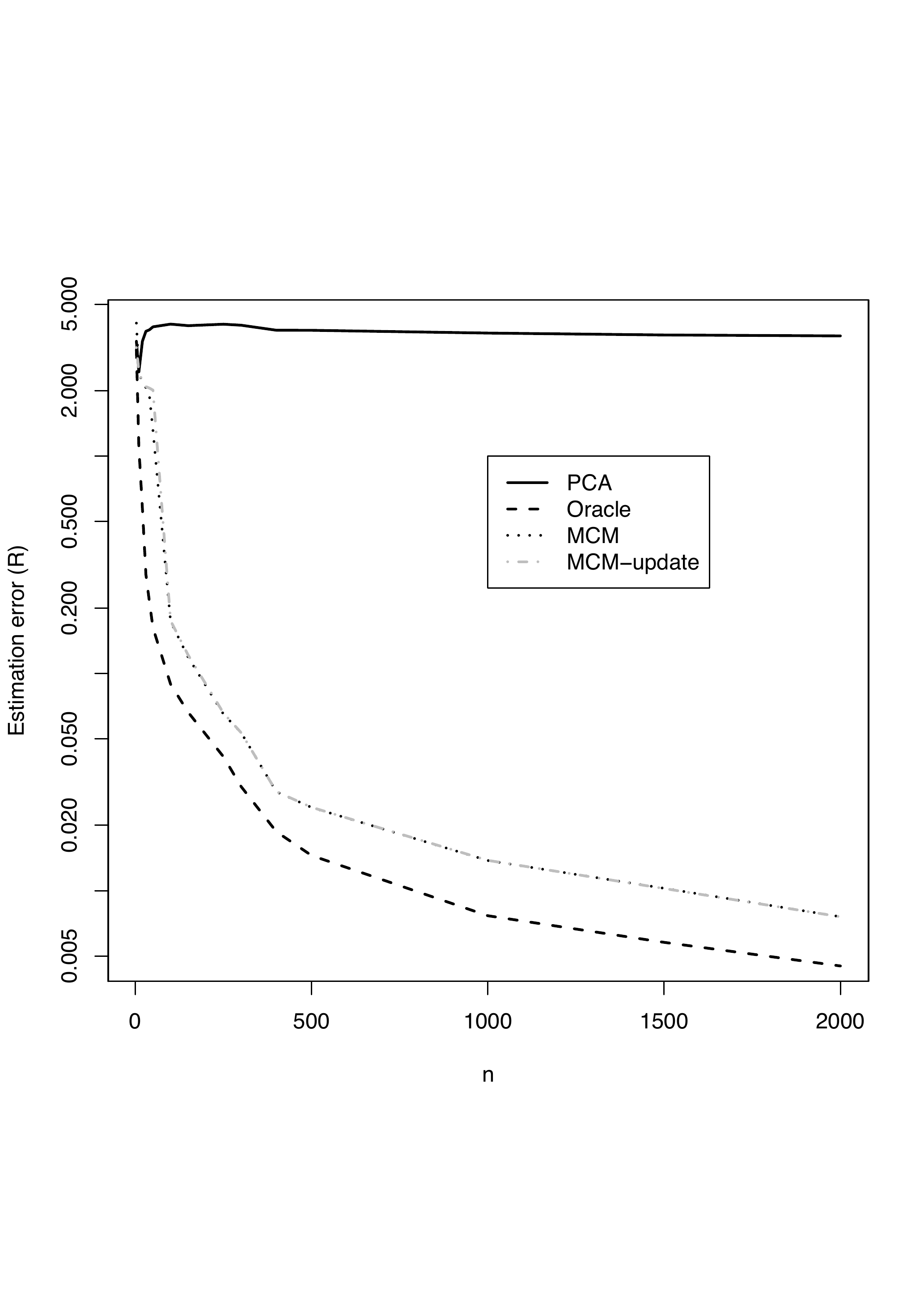}%
 \caption{Estimation errors of the eigenspaces (criterion $R(\widehat{\mathbf{P}}_q$)) with $d=1000$ and $q=3$ for classical PCA, the oracle PCA and the recursive MCM estimator with recursive estimation of the eigenelements (MCM-update) and with static estimation (based on the spectral decomposition of $\overline{V}_{n}$) of the eigenelements (MCM).}
 \label{fig:evolR}
 \end{center}
   \end{figure}

\subsection{Robust PCA of TV audience}
The last example is a high dimension and  large sample case. Individual TV audiences are measured, by the French  company M\'ediam\'etrie, every minutes for a panel of $n=5422$ people over a period of 24 hours, $d=1440$ (see \cite{CCM10} for a more detailed presentation of the data).
With a classical PCA, the first eigenspace represents  24.4\% of the total variability, whereas the second one reproduces  13.5\% of the total variance, the third one 9.64\% and the fourth one 6.79\%. Thus, more than 54\% of the variability of the data can be captured in a four dimensional space. Taking account of the large dimension of the data, these values indicate a high temporal  correlation.

Because of the large dimension of the data, the Weiszfeld's algorithm as well as the other robust PCA techniques can not be used anymore in a reasonable time with a personal computer. The MCM has been computed thanks to the recursive algorithm given in (\ref{def:Gammamedaver}) in approximately 3 minutes on a laptop  in the R language (without any specific C routine).

 \begin{figure}
   \begin{center}
 \includegraphics[height=10cm,width=15.5cm]{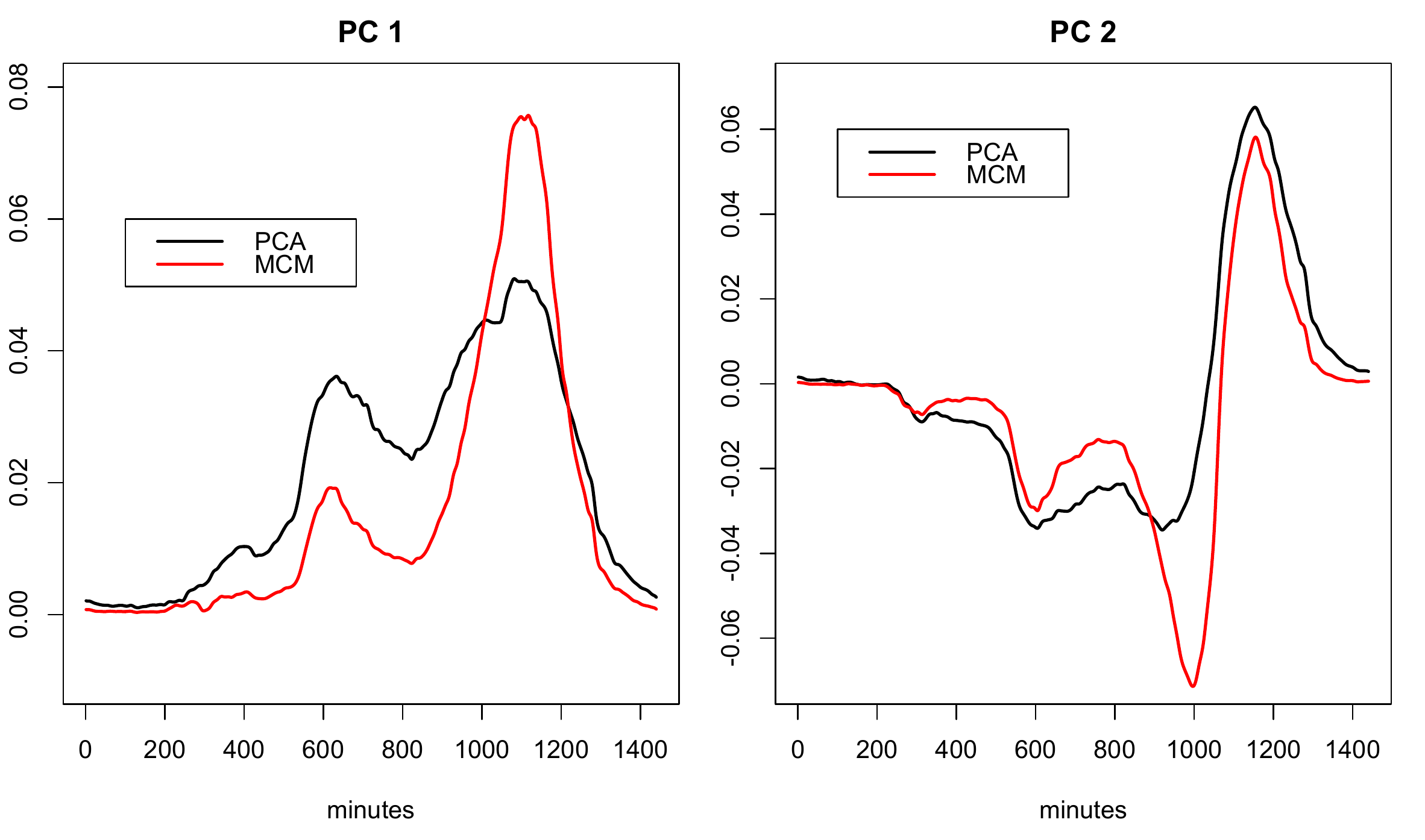}%
 \caption{TV audience data measured the 6th September 2010, at the minute scale. Comparison of the principal components of the classical PCA (black) and robust PCA based on the Median Covariation Matrix (red). First eigenvectors on the left, second eigenvectors on the right.}
 \label{fig1}
 \end{center}
   \end{figure}
   
As seen in Figure~\ref{fig1}, the first two eigenvectors obtained by a classical PCA and the robust PCA based on the MCM are rather different. 
This is confirmed by the relatively large distance between the two corresponding eigenspaces, $R(\widehat{P}_2^{PCA},  \widehat{P}_2^{MCM}) = 0.56$. The first robust eigenvector puts the stress on the time period comprised between 1000 minutes and 1200 minutes whereas the first non robust eigenvector focuses, with a smaller intensity, on a larger period of time comprised between 600 and 1200 minutes. The second robust eigenvector differentiates  between people watching TV during the period between 890 and 1050 minutes (negative value of the second principal component)  and people watching TV between minutes 1090 and 1220 (positive value of the second principal component). Rather surprisingly, the third and fourth eigenvectors of the non robust and robust covariance matrices look quite similar (see Figure~\ref{fig2}).

 \begin{figure}
   \begin{center}
  \includegraphics[height=10cm,width=15.5cm]{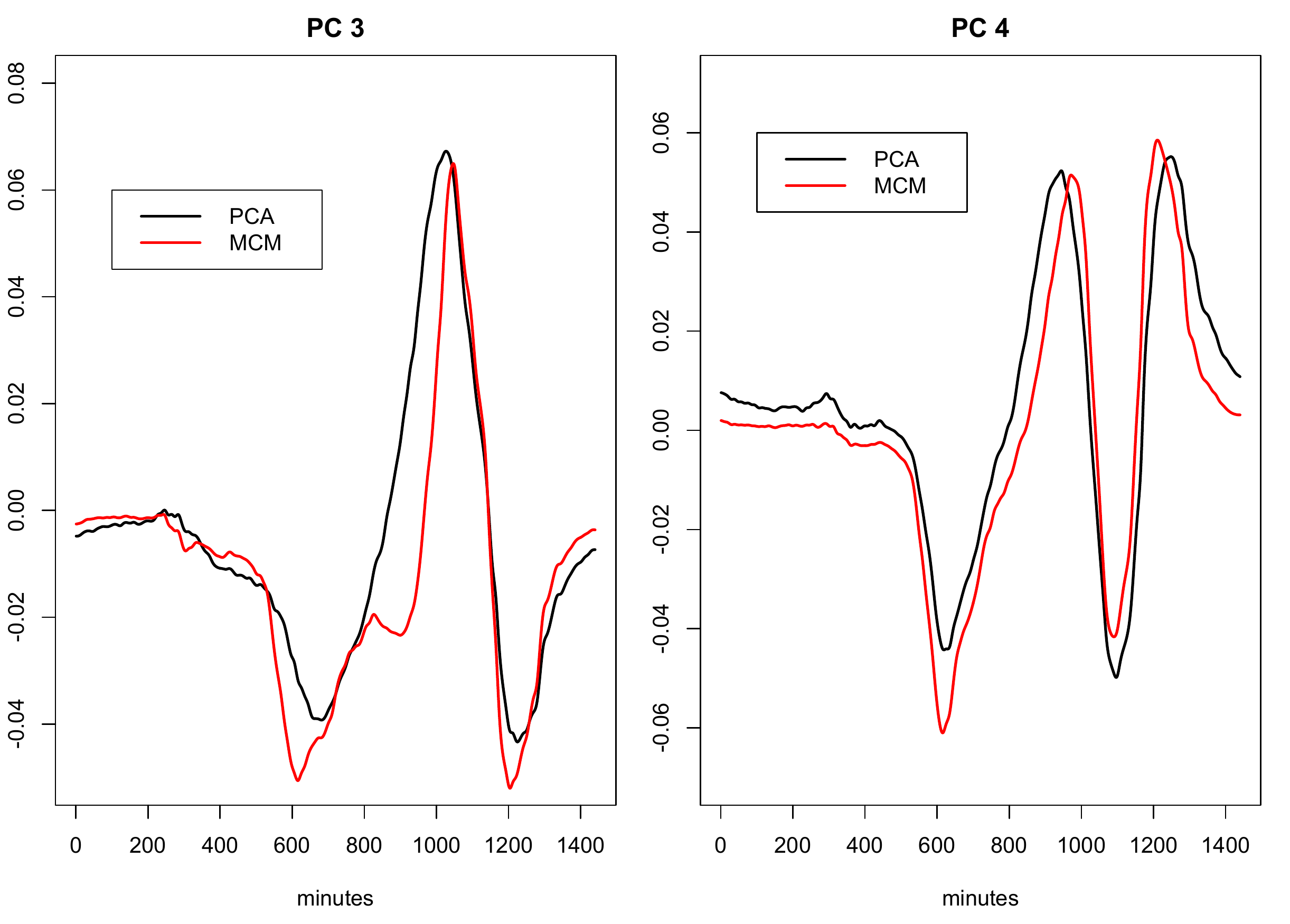}%
 \caption{TV audience data measured the 6th September 2010, at the minute scale. Comparison of the principal components of the classical PCA (black) and robust PCA based on the MCM (red). Third eigenvectors on the left, fourth eigenvectors on the right.}
 \label{fig2}
 \end{center}
   \end{figure}

\section{Proofs}
We give in this Section the proofs of Theorems \ref{theops}, \ref{theol2l4} and  \ref{th:cvgeqm}. These proofs rely on several technical Lemmas whose proofs are given in the Supplementary file.

\subsection{Proof of Theorem \ref{theops}}

Let us recall the Robbins-Monro algorithm, defined recursively by
\begin{align*}
V_{n+1} & =  V_{n} + \gamma_{n} \frac{\left( X_{n+1} - \overline{m}_{n} \right)\left( X_{n+1} - \overline{m}_{n} \right)^{T}-V_{n}}{\left\| \left( X_{n+1} - \overline{m}_{n} \right)\left( X_{n+1} - \overline{m}_{n} \right)^{T}-V_{n} \right\|_{F}} \\
& = V_{n} - \gamma_{n} U_{n+1},
\end{align*}
with $U_{n+1}:= - \frac{\left( X_{n+1} - \overline{m}_{n} \right)\left( X_{n+1} - \overline{m}_{n} \right)^{T}-V_{n}}{\left\| \left( X_{n+1} - \overline{m}_{n} \right)\left( X_{n+1} - \overline{m}_{n} \right)^{T}-V_{n} \right\|_{F}}$. Since $\mathcal{F}_{n} := \sigma \left( X_{1},...,X_{n} \right)$, we have $\mathbb{E}\left[ U_{n+1}|\mathcal{F}_{n} \right] = \nabla G_{\overline{m}_{n}}(V_{n})$. Thus  $\xi_{n+1}:= \nabla_{\overline{m}_{n}}G(V_{n}) - U_{n+1}$, $\left( \xi_{n} \right)$ is a sequence of martingale differences adapted to the filtration $\left( \mathcal{F}_{n} \right)$. Indeed, $\mathbb{E}\left[ \xi_{n+1} | \mathcal{F}_{n} \right] = \nabla G_{\overline{m}_{n}}(V_{n}) - \mathbb{E}\left[ U_{n+1}|\mathcal{F}_{n} \right] = 0$. The algorithm can be written as follows
\[
V_{n+1} = V_{n} - \gamma_{n} \nabla G_{\overline{m}_{n}}(V_{n}) + \gamma_{n}\xi_{n+1}.
\]
Moreover, it can be considered as a stochastic gradient algorithm because it can be decomposed as follows:
\begin{equation}
\label{decxi} V_{n+1} = V_{n} -  \gamma_{n}\left( \nabla G_{\overline{m}_{n}}(V_{n})- \nabla G_{\overline{m}_{n}} ( \Gamma_{m} )\right) + \gamma_{n}\xi_{n+1} - \gamma_{n}r_{n},
\end{equation}
with $r_{n} := \nabla G_{\overline{m}_{n}} ( \Gamma_{m}) - \nabla G_{m} ( \Gamma_{m})$. Finally, linearizing the gradient, 
\begin{equation}
\label{decdelta} V_{n+1} - \Gamma_{m} = \left( I_{\mathcal{S}(H)} - \gamma_{n} \nabla_{m}^{2}G(\Gamma_{m}) \right) \left( V_{n} -  \Gamma_{m}\right) + \gamma_{n}\xi_{n+1} - \gamma_{n}r_{n} - \gamma_{n}r_{n}' - \gamma_{n}\delta_{n},
\end{equation}
with
\begin{align*}
 r_{n}' & := \left( \nabla_{\overline{m}_{n}}^{2}G\left( \Gamma_{m}\right) - \nabla_{m}^{2}G\left( \Gamma_{m}\right)\right)\left( V_{n}-\Gamma_{m}\right) , \\
   \delta_{n} & := \nabla G_{\overline{m}_{n}}\left( V_{n} \right) - \nabla G_{\overline{m}_{n}}\left( \Gamma_{m} \right) - \nabla_{\overline{m}_{n}}^{2}G\left( \Gamma_{m}\right) \left( V_{n} - \Gamma_{m}\right) .  
\end{align*}

The following lemma gives upper bounds of these remainder terms. Its proof is given in the Supplementary file.
\begin{lem}\label{lem3maj}
Under assumptions 1-3(b), we can bound the three remainder terms. First,

\begin{equation}
\label{majdelta} \left\| \delta_{n} \right\|_{F} \leq 6C \left\| V_{n} - \Gamma_{m} \right\|_{F}^{2}.
\end{equation}
In the same way, for all $n \geq 1$,
\begin{equation}\label{majrn}
\left\| r_{n} \right\|_{F} \leq 4 \left( \sqrt{C} + C\sqrt{\left\|\Gamma_{m}\right\|_{F}}\right) \left\| \overline{m}_{n} - m \right\| .
\end{equation}
Finally, for all $n \geq 1$,
\begin{equation}\label{marn'}
\left\| r_{n}' \right\|_{F} \leq 12 \left( C \sqrt{\left\| \Gamma_{m} \right\|_{F}} + C^{3/4}\right) \left\| \overline{m}_{n} - m \right\| \left\| V_{n} - \Gamma_{m}\right\|_{F}. 
\end{equation}
\end{lem}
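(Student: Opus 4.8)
The three estimates share a single mechanism. Since $\overline{m}_n$ and $V_n$ are $\CF_n$-measurable, in each of $\delta_n$, $r_n$ and $r_n'$ the expectation defining $\nabla G$ and $\nabla_h^2 G$ is taken over an independent copy of $X$ only; it therefore suffices to bound each quantity \emph{pointwise} in the frozen values of $\overline{m}_n$ and $V_n$, the prefactors $\nrm{\overline{m}_n-m}$ and $\nrm{V_n-\Gamma_m}_F$ then coming out as deterministic multipliers. Writing $Y(h)=(X-h)(X-h)^T$, $\psi(W):=W/\nrm{W}_F$ and
\[
\Theta(A):=\frac{1}{\nrm{A-\Gamma_m}_F}\left(I_{\mathcal{S}(H)}-\frac{(A-\Gamma_m)\otimes_F(A-\Gamma_m)}{\nrm{A-\Gamma_m}_F^2}\right),
\]
formulas \eqref{def:gradV}--\eqref{def:HeV} read $\nabla G_h(V)=-\EE{\psi(Y(h)-V)}$ and $\nabla_h^2 G(\Gamma_m)=\EE{\Theta(Y(h))}$. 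The plan is to express each remainder as the integral of a derivative of $\psi$ or $\Theta$ along a segment, to bound that derivative by the appropriate negative power of the (possibly small) distance $\nrm{Y(h)-V}_F$, and only \emph{then} to integrate over $X$ and invoke Assumption~\ref{eq:invMomentCov}(b), which is uniform in the base point $h$ and in $V$.

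For \eqref{majdelta} I would use that $\psi$ is twice differentiable away from $0$ with $\nrm{D^2\psi(W)}_{op}\le c\,\nrm{W}_F^{-2}$ for an explicit numerical constant $c$ (a routine differentiation of $W\mapsto W/\nrm{W}_F$). Writing the integral form of the second-order Taylor expansion of $\psi$ between $Y(\overline{m}_n)-\Gamma_m$ and $Y(\overline{m}_n)-V_n$, with intermediate points $Y(\overline{m}_n)-V_t$ where $V_t:=\Gamma_m+t(V_n-\Gamma_m)$, the integrand is at most $c\,\nrm{Y(\overline{m}_n)-V_t}_F^{-2}\,\nrm{V_n-\Gamma_m}_F^2$. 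Passing the expectation over $X$ inside the integral and applying Assumption~\ref{eq:invMomentCov}(b) with $h=\overline{m}_n$, $V=V_t$ for each fixed $t$, together with $\int_0^1(1-t)\,dt=\tfrac12$, produces the factor $6C$ of \eqref{majdelta}.

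For \eqref{majrn} and \eqref{marn'} I would instead differentiate in the \emph{center} along the path $h_s:=m+s(\overline{m}_n-m)$, $s\in[0,1]$, for which $\tfrac{d}{ds}Y(h_s)=-(X-h_s)(\overline{m}_n-m)^T-(\overline{m}_n-m)(X-h_s)^T$, so $\nrm{\tfrac{d}{ds}Y(h_s)}_F\le 2\nrm{\overline{m}_n-m}\,\nrm{X-h_s}$. Using $\nrm{D\psi(W)}_{op}=\nrm{W}_F^{-1}$ and the analogous estimate $\nrm{D\Theta(A)}_{op}\le 6\,\nrm{A-\Gamma_m}_F^{-2}$, the fundamental theorem of calculus gives, after passing $\EE{\cdot}$ inside the $s$-integral,
\[
\nrm{r_n}_F\le 2\nrm{\overline{m}_n-m}\int_0^1\EE{\frac{\nrm{X-h_s}}{\nrm{Y(h_s)-\Gamma_m}_F}}\,ds,
\]
\[
\nrm{r_n'}_F\le 12\,\nrm{\overline{m}_n-m}\,\nrm{V_n-\Gamma_m}_F\int_0^1\EE{\frac{\nrm{X-h_s}}{\nrm{Y(h_s)-\Gamma_m}_F^2}}\,ds,
\]
the last factor $\nrm{V_n-\Gamma_m}_F$ coming from $r_n'=(\nabla_{\overline{m}_n}^2 G(\Gamma_m)-\nabla_m^2 G(\Gamma_m))(V_n-\Gamma_m)$.

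The crux — and the only place where care is genuinely needed — is that $X$ is assumed to have no finite moment, so the factor $\nrm{X-h_s}$ in these integrands cannot be controlled on its own. Here I would trade it for a negative power of $\nrm{Y(h_s)-\Gamma_m}_F$ via the identity $\nrm{X-h_s}=\nrm{Y(h_s)}_F^{1/2}\le\nrm{Y(h_s)-\Gamma_m}_F^{1/2}+\nrm{\Gamma_m}_F^{1/2}$. Combined with the bounded negative moments that follow from Assumption~\ref{eq:invMomentCov}(b) — namely $\EE{\nrm{Y(h_s)-\Gamma_m}_F^{-1}}\le\sqrt{C}$ and $\EE{\nrm{Y(h_s)-\Gamma_m}_F^{-2}}\le C$, and by Hölder interpolation $\EE{\nrm{Y(h_s)-\Gamma_m}_F^{-1/2}}\le C^{1/4}$ and $\EE{\nrm{Y(h_s)-\Gamma_m}_F^{-3/2}}\le C^{3/4}$ — the two integrands are bounded, uniformly in $s$, by expressions that are sums of a pure power of $C$ and a term proportional to $\sqrt{\nrm{\Gamma_m}_F}$; it is the $-3/2$ moment that produces the exponent $C^{3/4}$ visible in \eqref{marn'}. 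Collecting the numerical constants then yields \eqref{majrn} and \eqref{marn'}. The remaining ingredients — the explicit operator-norm bounds on $D\psi$, $D^2\psi$ and $D\Theta$ — are routine differentiations of the normalized field $W\mapsto W/\nrm{W}_F$ that I would relegate to the Supplementary file.
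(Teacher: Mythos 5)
Your proposal is correct and follows essentially the same route as the paper's proof: the paper also handles $r_n$ and $r_n'$ by interpolating along $h_t=m+t(\overline{m}_n-m)$, differentiating the normalized fields $\varphi_h(t)=\psi\bigl(Y(m+th)-\Gamma_m\bigr)$ and $\varphi_{h,V}(t)=\Theta\bigl(Y(m+th)\bigr)(V)$, trading $\nrm{X-h_t}=\nrm{Y(h_t)}_F^{1/2}\le\nrm{Y(h_t)-\Gamma_m}_F^{1/2}+\nrm{\Gamma_m}_F^{1/2}$, and invoking the uniform inverse moments of Assumption~3 (with the $-3/2$ moment giving the $C^{3/4}$), while the bound on $\delta_n$ likewise comes from a Taylor expansion of the gradient in $V$ with a $6C\nrm{V_n-\Gamma_m}_F^2$ control on the second-order remainder. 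The only differences are presentational — you phrase the derivative bounds as operator norms of $D\psi$, $D^2\psi$, $D\Theta$ where the paper expands $\varphi_h'(t)$ term by term (your version even yields slightly sharper numerical constants) — so the argument is sound.
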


We deduce from decomposition (\ref{decxi}) that for all $n \geq 1$,
\begin{align*}
\left\| V_{n+1} - \Gamma_{m}\right\|_{F}^{2} & = \left\| V_{n} - \Gamma_{m} \right\|_{F}^{2} -2 \gamma_{n} \left\langle V_{n} - \Gamma_{m} , \nabla G_{\overline{m}_{n}}(V_{n}) - \nabla G_{\overline{m}_{n}}(\Gamma_{m}) \right\rangle_{F} \\
& +\gamma_{n}^{2}\left\| \nabla G_{\overline{m}_{n}}(V_{n}) - \nabla G_{\overline{m}_{n}}(\Gamma_{m}) \right\|_{F}^{2} \\
& + \gamma_{n}^{2}\left\| \xi_{n+1}\right\|_{F}^{2} + 2 \gamma_{n} \left\langle V_{n} - \Gamma_{m} - \gamma_{n} \left( \nabla G_{\overline{m}_{n}}(V_{n}) - \nabla G_{\overline{m}_{n}}(\Gamma_{m}) \right) , \xi_{n+1} \right\rangle_{F} \\
& + \gamma_{n}^{2}\left\| r_{n} \right\|_{F}^{2} -2 \gamma_{n}\left\langle r_{n} , V_{n} - \Gamma_{m} \right\rangle_{F} -2\gamma_{n}^{2} \left\langle r_{n} , \xi_{n+1} - \nabla G_{\overline{m}_{n}}(V_{n}) + \nabla G_{\overline{m}_{n}}(\Gamma_{m}) \right\rangle_{F} . 
\end{align*}
Note that for all $h \in H$ and $V \in \mathcal{S}(H)$ we have $\left\| \nabla G_{h}(V) \right\|_{F} \leq 1$. Furthermore, $\left\| r_{n} \right\|_{F} \leq 2$ and $\left\| \xi_{n+1} \right\|_{F} \leq 2$. Using the fact that $\left( \xi_{n} \right)$ is a sequence of martingale differences adapted to the filtration $\left( \mathcal{F}_{n} \right)$,
\begin{align*}
\mathbb{E}\left[ \left\| V_{n+1} - \Gamma_{m} \right\|_{F}^{2} |\mathcal{F}_{n} \right] & \leq \left\| V_{n} - \Gamma_{m}\right\|_{F}^{2} -2\gamma_{n} \left\langle V_{n} - \Gamma_{m} , \nabla_{\overline{m}_{n}}G \left( V_{n} \right) - \nabla_{\overline{m}_{n}}G\left( \Gamma_{m} \right) \right\rangle_{F} \\
& +  28\gamma_{n}^{2}  -2\gamma_{n} \left\langle r_{n} , V_{n} - \Gamma_{m} \right\rangle_{F}.
\end{align*}
Let $\alpha_{n} = n^{-\beta}$, with $\beta \in ( 1-\alpha ,  \alpha )$, we have
\begin{align}
\label{majvitps}\mathbb{E}\left[ \left\| V_{n+1} - \Gamma_{m} \right\|_{F}^{2} |\mathcal{F}_{n} \right] & \leq \left( 1+\gamma_{n}\alpha_{n} \right) \left\| V_{n} - \Gamma_{m}\right\|_{F}^{2} -2\gamma_{n} \left\langle V_{n} - \Gamma_{m} , \nabla_{\overline{m}_{n}}G \left( V_{n} \right) - \nabla_{\overline{m}_{n}}G\left( \Gamma_{m} \right) \right\rangle_{F}  \\
\notag & + 28\gamma_{n}^{2} +\frac{\gamma_{n}}{\alpha_{n}}\left\|  r_{n} \right\|_{F}^{2} .
\end{align}

Moreover, applying Lemma \ref{lem3maj} and Theorem 5.1 in \cite{godichon2015}, we get for all positive constant $\delta$,
\[
\left\| r_{n} \right\|_{F}^{2} = O \left( \left\| \overline{m}_{n} - m \right\|^{2} \right) = O \left( \frac{\left( \ln n \right)^{1+\delta}}{n} \right) \quad a.s.
\]
Thus, since $2\gamma_{n} \left\langle V_{n} - \Gamma_{m} , \nabla_{\overline{m}_{n}}G \left( V_{n} \right) - \nabla_{\overline{m}_{n}}G\left( \Gamma_{m} \right) \right\rangle_{F} \geq 0$, the  Robbins-Siegmund Theorem (see \cite{Duf97} for instance) ensures that  $\left\| V_{n} - \Gamma_{m} \right\|_{F}$ converges almost surely to a finite random variable and
\[
\sum_{n \geq 1} \gamma_{n}\left\langle V_{n} - \Gamma_{m} , \nabla_{\overline{m}_{n}}G \left( V_{n} \right) - \nabla_{\overline{m}_{n}}G\left( \Gamma_{m} \right) \right\rangle_{F} < + \infty \quad a.s.
\]
Furthermore, by induction, inequality (\ref{majvitps}) becomes
\begin{align*}
\mathbb{E}\left[ \left\| V_{n+1} - \Gamma_{m} \right\|_{F}^{2} \right] & \leq  \left( \prod_{k=1}^{\infty} \left( 1+ \gamma_{k}\alpha_{k} \right)\right) \mathbb{E}\left[ \left\| V_{1} - \Gamma_{m} \right\|_{F}^{2} \right] + 28\left( \prod_{k=1}^{\infty} \left( 1+ \gamma_{k}\alpha_{k} \right) \right)\sum_{k=1}^{\infty}\gamma_{k}^{2} \\
&  + \left( \prod_{k=1}^{\infty} \left( 1+ \gamma_{k}\alpha_{k} \right) \right)  \sum_{k=1}^{\infty} \frac{\gamma_{k}}{\alpha_{k}}\mathbb{E}\left[ \left\| r_{k} \right\|_{F}^{2} \right] . 
\end{align*}
Since $\beta < \alpha$, applying Theorem 4.2 in \cite{godichon2015} and Lemma 6.1, there is a positive constant $C_{0}$ such that
\[
\sum_{k=1}^{\infty}\frac{\gamma_{k}}{\alpha_{k}}\mathbb{E}\left[ \left\| r_{k} \right\|_{F}^{2} \right] = C_{0} \sum_{k=1}^{\infty}k^{-\alpha -1 -\beta} < +\infty . 
\]
Thus, there is a positive constant $M$ such that for all $n \geq 1$, $\mathbb{E}\left[ \left\| V_{n} - \Gamma_{m} \right\|_{F}^{2} \right] \leq M$.
Since $\overline{m}_{n}$ converges almost surely to $m$, one can conclude the proof of the almost sure consistency of $V_n$ with the same arguments as in the proof of Theorem 3.1 in \cite{CCZ11} and the convexity properties given in the Section B of the supplementary file.

Finally, the almost sure consistency of $\overline{V}_n$ is obtained by a direct application of  Topelitz's lemma (see {\it e.g.} Lemma 2.2.13 in \cite{Duf97}).

\subsection{Proof of Theorem \ref{theol2l4}}

The proof of Theorem \ref{theol2l4} relies on  properties of  the $p$-th moments of $V_{n}$ for all $p \geq 1$ given in the following three Lemmas.
 These properties enable us, with the application of Markov's inequality, to control the probability of the deviations of  the Robbins Monro  algorithm from $\Gamma_{m}$. 
 
\begin{lem}\label{lemmajordre}
Under assumptions 1-3(b), for all integer $p$, there is a positive constant $M_{p}$ such that for all $n \geq 1$,
\begin{align*}
\mathbb{E}\left[ \left\| V_{n} - \Gamma_{m} \right\|_{F}^{2p}\right] &\leq M_{p}.
\end{align*}
\end{lem}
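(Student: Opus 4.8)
The plan is to prove the bound by induction on $p$, reusing the decomposition and the remainder estimates already set up for Theorem \ref{theops}. The case $p=1$ is exactly the uniform bound $\mathbb{E}\left[\left\| V_n - \Gamma_m\right\|_{F}^{2}\right]\leq M$ obtained at the end of the proof of Theorem \ref{theops}, so I take it as the base case and assume, for the induction step, that for every $j\leq p-1$ there is a constant $M_j$ with $\mathbb{E}\left[\left\| V_n-\Gamma_m\right\|_{F}^{2j}\right]\leq M_j$ for all $n$. Throughout I write $\Delta_n:=V_n-\Gamma_m$ and $\Phi_n:=\nabla G_{\overline{m}_n}(V_n)-\nabla G_{\overline{m}_n}(\Gamma_m)$, and I use the bounds $\left\|\nabla G_h(V)\right\|_{F}\leq 1$, $\left\|\xi_{n+1}\right\|_{F}\leq 2$, $\left\| r_n\right\|_{F}\leq 2$, the convexity inequality $\left\langle\Delta_n,\Phi_n\right\rangle_{F}\geq 0$, and the sharper control $\left\| r_n\right\|_{F}\leq L\left\|\overline{m}_n-m\right\|$ from Lemma \ref{lem3maj}, with $L:=4\left(\sqrt{C}+C\sqrt{\left\|\Gamma_m\right\|_{F}}\right)$.

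First I would square the recursion $\Delta_{n+1}=\Delta_n-\gamma_n\Phi_n+\gamma_n\xi_{n+1}-\gamma_n r_n$ and isolate the only non $\mathcal{F}_n$-measurable ingredient, namely $\xi_{n+1}$. Writing $P_n:=-\gamma_n\Phi_n-\gamma_n r_n$, which is $\mathcal{F}_n$-measurable with $\left\| P_n\right\|_{F}\leq 4\gamma_n$, one gets $\left\|\Delta_{n+1}\right\|_{F}^{2}=\widetilde{W}_n+N_n+(R_n-4\gamma_n^{2})$, where $\widetilde{W}_n:=\left\|\Delta_n+P_n\right\|_{F}^{2}+4\gamma_n^{2}$ is $\mathcal{F}_n$-measurable, $N_n:=2\gamma_n\left\langle\Delta_n+P_n,\xi_{n+1}\right\rangle_{F}$ satisfies $\mathbb{E}\left[N_n|\mathcal{F}_n\right]=0$ and $|N_n|\leq 4\gamma_n\sqrt{\widetilde{W}_n}$, and $R_n:=\gamma_n^{2}\left\|\xi_{n+1}\right\|_{F}^{2}\in[0,4\gamma_n^{2}]$. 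Since $0\leq\left\|\Delta_{n+1}\right\|_{F}^{2}\leq\widetilde{W}_n+N_n$, I may raise to the power $p$, expand binomially and take the conditional expectation; using $\mathbb{E}\left[N_n|\mathcal{F}_n\right]=0$ and $|N_n|^{k}\leq(4\gamma_n)^{k}\widetilde{W}_n^{\,k/2}$ yields
\[
\mathbb{E}\left[\left\|\Delta_{n+1}\right\|_{F}^{2p}\,\middle|\,\mathcal{F}_n\right]\leq\widetilde{W}_n^{\,p}+\sum_{k=2}^{p}\binom{p}{k}(4\gamma_n)^{k}\,\widetilde{W}_n^{\,p-k/2}.
\]
The main term is then treated by expanding $\widetilde{W}_n\leq(1+\gamma_n\alpha_n)\left\|\Delta_n\right\|_{F}^{2}+e_n$, with $e_n:=\frac{\gamma_n}{\alpha_n}\left\| r_n\right\|_{F}^{2}+20\gamma_n^{2}\geq 0$, obtained by dropping the nonnegative convexity term $-2\gamma_n\left\langle\Delta_n,\Phi_n\right\rangle_{F}$ and applying Young's inequality to $-2\gamma_n\left\langle\Delta_n,r_n\right\rangle_{F}$ with the weight $\alpha_n=n^{-\beta}$, $1-\alpha<\beta<\alpha$, exactly as for Theorem \ref{theops}.

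The heart of the argument is to convert this into a linear recursion for $a_n:=\mathbb{E}\left[\left\|\Delta_n\right\|_{F}^{2p}\right]$. Expanding $\widetilde{W}_n^{\,p}$, the $j=0$ term gives $(1+\gamma_n\alpha_n)^{p}\left\|\Delta_n\right\|_{F}^{2p}\leq(1+C_p\gamma_n\alpha_n)\left\|\Delta_n\right\|_{F}^{2p}$ for $n$ large, i.e. a factor $(1+u_n)$ with $u_n=C_p\gamma_n\alpha_n$ and $\sum_n u_n<\infty$ since $\alpha+\beta>1$. Each cross term ($1\leq j\leq p$) carries a factor $e_n^{j}$ whose leading part is $(\gamma_n/\alpha_n)^{j}\left\| r_n\right\|_{F}^{2j}$; after Hölder's inequality and $\left\| r_n\right\|_{F}^{2p}\leq L^{2p}\left\|\overline{m}_n-m\right\|^{2p}$, its expectation is dominated by $(\gamma_n/\alpha_n)^{j}\,a_n^{(p-j)/p}b_n^{\,j/p}$, where $b_n:=\mathbb{E}\left[\left\|\overline{m}_n-m\right\|^{2p}\right]$. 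The lower order terms $\gamma_n^{k}\widetilde{W}_n^{\,p-k/2}$ ($k\geq 2$) are controlled via $\widetilde{W}_n^{\,p-k/2}\leq C(1+\left\|\Delta_n\right\|_{F}^{2(p-1)})$ and the induction hypothesis $\mathbb{E}\left[\left\|\Delta_n\right\|_{F}^{2(p-1)}\right]\leq M_{p-1}$, giving a summable multiple of $\gamma_n^{2}$ (here $2\alpha>1$ is used). Linearising $a_n^{(p-j)/p}\leq 1+a_n$, I reach a recursion $a_{n+1}\leq(1+u_n+\widetilde{v}_n)a_n+(\widetilde{v}_n+w_n)$ with all of $\sum u_n,\sum\widetilde{v}_n,\sum w_n$ finite, from which a convergent product (a Toeplitz-type iteration as in Lemma 2.2.13 of \cite{Duf97}) furnishes the uniform bound $M_p$; finitely many initial indices are harmless.

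The crucial and most delicate point is the summability of the coefficient $\widetilde{v}_n$ of $a_n$ produced by the cross terms, which is precisely where the correlation between the two simultaneously running algorithms bites: naively this coefficient is of order $(\gamma_n/\alpha_n)^{j}=O(n^{(\beta-\alpha)j})$, which is \emph{not} summable over the admissible range of $\beta$. Summability is restored only by exploiting the sharp parametric rate of the averaged median estimator, $b_n=\mathbb{E}\left[\left\|\overline{m}_n-m\right\|^{2p}\right]=O(n^{-p})$ up to logarithmic factors, which follows from the $L^{2p}$ rates for the averaged geometric-median algorithm established in \cite{godichon2015}; this supplies the extra factor $b_n^{\,j/p}=O(n^{-j})$, so that $\widetilde{v}_n=O\!\left(n^{(\beta-\alpha-1)j}\right)$ is summable for every $j\geq 1$ exactly because $\beta<\alpha$. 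Thus the constraint $1-\alpha<\beta<\alpha$ together with $\alpha>1/2$ is what simultaneously guarantees convergence of the three series, and I expect the bookkeeping of these cross terms, rather than any single estimate, to be the main technical obstacle.
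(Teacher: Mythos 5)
Your proof is correct and follows essentially the same route as the paper's: induction on $p$ with the base case $p=1$ taken from the proof of Theorem~\ref{theops}, a binomial expansion of the squared recursion in which the linear martingale term vanishes after conditioning on $\mathcal{F}_n$, the induction hypothesis absorbing all lower-order terms, and --- the crucial point, which you identify correctly --- the parametric $L^{2p}$ rate $\mathbb{E}\left[\left\|\overline{m}_n-m\right\|^{2p}\right]=O\left(n^{-p}\right)$ from Theorem~4.2 of \cite{godichon2015} to make the cross terms involving $r_n$ summable. The only difference is bookkeeping: the paper bounds the critical term $\gamma_n\mathbb{E}\left[\left\|r_n\right\|_F\left\|V_n-\Gamma_m\right\|_F^{2p-1}\right]$ directly via H\"older with exponents $\left(2p,\tfrac{2p}{2p-1}\right)$, producing a summable coefficient $O\left(n^{-\alpha-1/2}\right)$ from $\alpha>1/2$, whereas you detour through Young's inequality with weight $n^{-\beta}$ before applying H\"older, producing coefficients $O\left(n^{(\beta-\alpha-1)j}\right)$ summable because $\beta<\alpha$ --- the same mechanism in slightly different clothes.
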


\begin{lem}\label{lem1}
Under assumptions 1-3(b), there are positive constants $C_{1},C_{1}',C_{2},C_{3}$ such that for all $n \geq 1$,
\begin{align*}
\mathbb{E}\left[ \left\| V_{n} - \Gamma_{m} \right\|^{2} \right] &\leq C_{1}e^{-C_{1}'n^{1-\alpha}} + \frac{C_{2}}{n^{\alpha}}  + C_{3}\sup_{E (n/2)+1 \leq k \leq n-1}\mathbb{E}\left[ \left\| V_{k} - \Gamma_{m} \right\|^{4}\right] , 
\end{align*}
where $E(x)$ is the integer part of the real number $x$.
\end{lem}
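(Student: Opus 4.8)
The plan is to iterate the linearized decomposition (\ref{decdelta}) and to control separately the three kinds of contributions it generates. Write $T_{k,n} := \prod_{j=k+1}^{n}\left( I_{\mathcal{S}(H)} - \gamma_{j}\nabla_{m}^{2}G(\Gamma_{m})\right)$ for the product of contraction operators, with $T_{n,n}=I_{\mathcal{S}(H)}$. Under Assumptions 1-3(b) the Hessian $\nabla_{m}^{2}G(\Gamma_{m})$ is a bounded, self-adjoint, positive operator whose smallest eigenvalue $\lmin$ is strictly positive (by the convexity properties recalled in the supplementary file), so that for $n$ large enough the deterministic bound $\left\| T_{k,n}\right\| \leq \prod_{j=k+1}^{n}\left(1-\gamma_{j}\lmin\right) \leq \exp\left(-\lmin \sum_{j=k+1}^{n}\gamma_{j}\right)$ holds. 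Unrolling (\ref{decdelta}) yields
\[
V_{n+1} - \Gamma_{m} = T_{0,n}\left( V_{1} - \Gamma_{m}\right) + \sum_{k=1}^{n}\gamma_{k}T_{k,n}\xi_{k+1} - \sum_{k=1}^{n}\gamma_{k}T_{k,n}\left( r_{k} + r_{k}' + \delta_{k}\right),
\]
so that, using $\left\| a+b+c\right\|_{F}^{2} \leq 3\left(\left\| a\right\|_{F}^{2} + \left\| b\right\|_{F}^{2} + \left\| c\right\|_{F}^{2}\right)$, it suffices to bound the three resulting expectations.

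First I would treat the initialization and the noise. For the initial term, $\left\| T_{0,n}\right\|^{2} \leq \exp\left(-2\lmin\sum_{j=1}^{n}\gamma_{j}\right)$, and since $\sum_{j=1}^{n}\gamma_{j}=c_{\gamma}\sum_{j=1}^{n}j^{-\alpha}$ grows like a constant times $n^{1-\alpha}$, this produces a term of the form $C_{1}e^{-C_{1}'n^{1-\alpha}}$. For the noise term, the fact that $(\xi_{k})$ is a sequence of martingale differences makes all cross terms vanish, so $\mathbb{E}\left[\left\|\sum_{k=1}^{n}\gamma_{k}T_{k,n}\xi_{k+1}\right\|_{F}^{2}\right] = \sum_{k=1}^{n}\gamma_{k}^{2}\mathbb{E}\left[\left\| T_{k,n}\xi_{k+1}\right\|_{F}^{2}\right] \leq 4\sum_{k=1}^{n}\gamma_{k}^{2}\exp\left(-2\lmin\sum_{j=k+1}^{n}\gamma_{j}\right)$, using $\left\|\xi_{k+1}\right\|_{F}\leq 2$. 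A standard comparison lemma for steps $\gamma_{k}=c_{\gamma}k^{-\alpha}$ bounds this last sum by a constant times $\gamma_{n}$, giving the $C_{2}/n^{\alpha}$ term.

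The delicate part is the remainder. Since $r_{k}, r_{k}', \delta_{k}$ are not martingale increments, I would bound it by the triangle inequality followed by a weighted Cauchy--Schwarz inequality, $\mathbb{E}\left[\left\|\sum_{k}\gamma_{k}T_{k,n} w_{k}\right\|_{F}^{2}\right] \leq \left(\sum_{k}\gamma_{k}\left\| T_{k,n}\right\|\right)\left(\sum_{k}\gamma_{k}\left\| T_{k,n}\right\| \mathbb{E}\left[\left\| w_{k}\right\|_{F}^{2}\right]\right)$ with $w_{k}=r_{k}+r_{k}'+\delta_{k}$, the operator norms factoring out because $T_{k,n}$ is deterministic. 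The first factor is bounded uniformly in $n$ (again by the comparison lemma, its value being of order $1/\lmin$). For the second factor, Lemma \ref{lem3maj} together with $\left(a+b+c\right)^{2}\leq 3(a^{2}+b^{2}+c^{2})$ and $xy\leq \tfrac{1}{2}(x^{2}+y^{2})$ give $\mathbb{E}\left[\left\| w_{k}\right\|_{F}^{2}\right] \leq C\left(\mathbb{E}\left[\left\|\overline{m}_{k}-m\right\|^{2}\right] + \mathbb{E}\left[\left\|\overline{m}_{k}-m\right\|^{4}\right] + \mathbb{E}\left[\left\| V_{k}-\Gamma_{m}\right\|_{F}^{4}\right]\right)$; the moments of the averaged median estimator are controlled through \cite{godichon2015}, yielding $\mathbb{E}\left[\left\|\overline{m}_{k}-m\right\|^{2}\right]=O(\ln k / k)$ and $\mathbb{E}\left[\left\|\overline{m}_{k}-m\right\|^{4}\right]=O(k^{-2})$, both of which are $O(k^{-\alpha})$ since $\alpha<1$.

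Finally I would split the second factor at $k=E(n/2)$. For $k\leq E(n/2)$ the contraction factor satisfies $\left\| T_{k,n}\right\|\leq \exp\left(-\lmin\sum_{j=E(n/2)+1}^{n}\gamma_{j}\right)\leq e^{-C_{1}'n^{1-\alpha}}$ because $\sum_{j=E(n/2)+1}^{n}\gamma_{j}$ is itself of order $n^{1-\alpha}$; combined with the boundedness of all moments of $V_{k}$ (Lemma \ref{lemmajordre}) and of the median moments, this half of the sum is absorbed into the exponential term. For $E(n/2)+1\leq k\leq n-1$ the fourth-order moments of $V_{k}$ are bounded by their supremum over that range, the median contributions are $O(k^{-\alpha})=O(n^{-\alpha})$, and the uniformly bounded weight converts the remaining sum into $C_{2}/n^{\alpha}+C_{3}\sup_{E(n/2)+1\leq k\leq n-1}\mathbb{E}\left[\left\| V_{k}-\Gamma_{m}\right\|_{F}^{4}\right]$; collecting the three contributions gives the claim. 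The main obstacle is precisely this dyadic splitting of the remainder sum: one must check that the ``old'' indices are suppressed exponentially, so that neither the bounded-moment factors nor the polynomially growing weight $\sum_{k}\gamma_{k}\left\| T_{k,n}\right\|$ spoil the bound, while the ``recent'' indices reproduce exactly the fourth-order supremum on the right-hand side, whose self-referential appearance is what makes this lemma the key coupled induction step toward Theorem \ref{theol2l4}.
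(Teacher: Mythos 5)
Your proof is correct and follows essentially the same route as the paper's: the same unrolled linearization with products of the contraction operators $I_{\mathcal{S}(H)} - \gamma_j \nabla_m^2 G(\Gamma_m)$, the same exponential operator-norm bounds (with the constants absorbing the finitely many early steps where $\gamma_j$ is not small), martingale orthogonality for the noise term, and the same splitting of the remainder sum at $E(n/2)$ that produces the supremum of the fourth-order moments. The only (harmless) difference is that you group $r_k$, $r_k'$ and $\delta_k$ into a single term and apply one weighted Cauchy--Schwarz inequality, obtaining $O(n^{-\alpha})$ for the median-driven contributions, whereas the paper treats the three remainders separately via its Minkowski-type summation inequality (Lemma \ref{lemsumg}) and gets the sharper $O(1/n)$ for the $r_k$ and $r_k'$ parts --- both are sufficient for the stated bound.
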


\begin{lem}\label{lem2}
Under assumptions 1-3(b), for all integer $p' \geq 1$, there are a rank $n_{p'}$ and positive constants $C_{1,p'},C_{2,p'},C_{3,p'},c_{p'}$ such that for all $n \geq n_{p'}$,
\begin{align*}
\mathbb{E}\left[ \left\| V_{n+1} - \Gamma_{m}\right\|_{F}^{4} \right] &\leq \left( 1-c_{p'}\gamma_{n}n^{-\frac{1-\alpha}{p'}}\right)\mathbb{E}\left[ \left\| V_{n} - \Gamma_{m} \right\|_{F}^{4}\right] + \frac{C_{1,p'}}{n^{3\alpha}} + \frac{C_{2,p'}}{n^{2\alpha}}\mathbb{E}\left[ \left\| V_{n} - \Gamma_{m}\right\|_{F}^{2}\right] + \frac{C_{3,p'}}{n^{3\alpha -3\frac{1-\alpha}{p'}}}.
\end{align*}
\end{lem}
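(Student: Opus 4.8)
The plan is to start from the linearized decomposition (\ref{decdelta}) while keeping the gradient in the nonlinear form used in the proof of Theorem \ref{theops}. I would first reproduce the exact expansion underlying (\ref{majvitps}) for $\left\| V_{n+1} - \Gamma_m\right\|_F^2$, then square that identity to obtain $\left\| V_{n+1} - \Gamma_m\right\|_F^4$, and take the conditional expectation $\mathbb{E}\left[ \cdot\, |\, \mathcal{F}_n\right]$. Since $\left( \xi_n\right)$ is a sequence of martingale differences and one has the almost sure bounds $\left\| \xi_{n+1}\right\|_F \leq 2$, $\left\| r_n\right\|_F \leq 2$ and $\left\| \nabla G_h(V)\right\|_F \leq 1$, the martingale structure removes the leading cross terms while the higher-order stochastic contributions are controlled by powers of $\gamma_n$ via the uniform bound on $\left\| \xi_{n+1}\right\|_F$. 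This leaves, as the $\mathcal{F}_n$-measurable main part, a quantity of the form $\left\| V_n - \Gamma_m\right\|_F^4 - 4\gamma_n \left\| V_n - \Gamma_m\right\|_F^2 \left\langle V_n - \Gamma_m,\, \nabla G_{\overline{m}_n}(V_n) - \nabla G_{\overline{m}_n}(\Gamma_m)\right\rangle_F$ together with remainder terms.

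The crux is to extract a contraction from the inner product $\left\langle V_n - \Gamma_m,\, \nabla G_{\overline{m}_n}(V_n) - \nabla G_{\overline{m}_n}(\Gamma_m)\right\rangle_F$. In the general Hilbert-space setting there is no uniform strong-convexity constant available on all of $\mathcal{S}(H)$: the convexity estimates for $G_h$, which hold uniformly in $h$ under Assumption 3(b) (see Section B of the Supplementary file and \cite{CCZ11}), only give a lower bound that behaves like $c\left\| V_n - \Gamma_m\right\|_F^2 / \left( 1 + \left\| V_n - \Gamma_m\right\|_F\right)$ and therefore degrades like $1/\left\| V_n - \Gamma_m\right\|_F$ on large deviations. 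This is exactly the obstruction that the free integer $p'$ is designed to overcome. I would split the conditional expectation according to the event where $\left\| V_n - \Gamma_m\right\|_F \leq a_n$, with threshold $a_n \asymp n^{(1-\alpha)/p'}$: on this event the degrading convexity bound delivers an effective contraction rate of order $\gamma_n / a_n \asymp \gamma_n\, n^{-(1-\alpha)/p'}$, which is precisely the factor $\left( 1 - c_{p'}\gamma_n n^{-(1-\alpha)/p'}\right)$ multiplying $\mathbb{E}\left[ \left\| V_n - \Gamma_m\right\|_F^4\right]$ in the statement.

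On the complementary large-deviation event $\left\{ \left\| V_n - \Gamma_m\right\|_F > a_n\right\}$ no contraction is available, but its probability is negligible: by Markov's inequality combined with the uniform moment bound of Lemma \ref{lemmajordre}, applied at an order $2q$ chosen large enough relative to $p'$, one gets $\mathbb{P}\left( \left\| V_n - \Gamma_m\right\|_F > a_n\right) \leq M_q a_n^{-2q} = O\left( n^{-2q(1-\alpha)/p'}\right)$, and a Cauchy--Schwarz split of $\mathbb{E}\left[ \left\| V_n - \Gamma_m\right\|_F^4 \, \mathbf{1}_{\left\{ \left\| V_n - \Gamma_m\right\|_F > a_n\right\}}\right]$ shows that the missing contraction costs at most the term $C_{3,p'}\, n^{-\left( 3\alpha - 3(1-\alpha)/p'\right)}$; this is exactly why Lemma \ref{lemmajordre} is required at every integer order. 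The remaining pieces are then controlled with Lemma \ref{lem3maj}: the quadratic Taylor remainder $\delta_n$, bounded by $6C\left\| V_n - \Gamma_m\right\|_F^2$, is of higher order and is absorbed on the truncation event once $n$ is large; the bias terms $r_n$ and $r_n'$, bounded in terms of $\left\| \overline{m}_n - m\right\|$ and $\left\| \overline{m}_n - m\right\|\,\left\| V_n - \Gamma_m\right\|_F$ respectively, are combined with the $L^{2p}$ rates of the averaged median estimator $\overline{m}_n$ from \cite{godichon2015} and separated by Young's inequality, yielding the contributions $C_{1,p'}\, n^{-3\alpha}$ and $C_{2,p'}\, n^{-2\alpha}\, \mathbb{E}\left[ \left\| V_n - \Gamma_m\right\|_F^2\right]$, while the stochastic terms $\gamma_n^2\left\| \xi_{n+1}\right\|_F^2$ and $\gamma_n^4$ contribute only lower-order amounts.

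I expect the genuine difficulty to be the balancing carried out in the last two paragraphs: securing a quantitative convexity lower bound that is uniform in the moving center $\overline{m}_n$, and tracking how its degradation at rate $1/\left\| V_n - \Gamma_m\right\|_F$ interacts with the truncation level $a_n$ and the available moments. The parameter $p'$ is precisely the knob that tunes this trade-off — a larger $p'$ pushes the contraction toward the ideal rate $\gamma_n$ but requires higher-order moments and inflates the constants — and it is this device that allows one to dispense with the restrictive finite-dimensionality and Taylor-remainder hypotheses of the classical two-time-scale Robbins--Monro theory mentioned in the introduction.
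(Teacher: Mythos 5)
Your outline reproduces the paper's own strategy for Lemma \ref{lem2}: square the one-step expansion of $\left\| V_{n+1}-\Gamma_{m}\right\|_{F}^{2}$, use the martingale property of $\left(\xi_{n}\right)$ and the uniform bounds $\left\| \xi_{n+1}\right\|_{F}\leq 2$, $\left\| r_{n}\right\|_{F}\leq 2$, truncate at the level $n^{(1-\alpha)/p'}$, extract the degraded contraction $\gamma_{n}n^{-(1-\alpha)/p'}$ from the convexity bound that grows only linearly in $\left\| V_{n}-\Gamma_{m}\right\|_{F}$ far from $\Gamma_{m}$, kill the complement by Markov's inequality with the all-order moments of Lemma \ref{lemmajordre}, and treat the $r_{n}$ cross term by Young's inequality together with the $L^{2p}$ rates of $\overline{m}_{n}$ from \cite{godichon2015}. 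However, two steps are wrong or unresolved as written. First, you mix the two decompositions. The contraction mechanism you describe works from (\ref{decxi}), where the increment $\nabla G_{\overline{m}_{n}}(V_{n})-\nabla G_{\overline{m}_{n}}(\Gamma_{m})$ is kept whole; the terms $\delta_{n}$ and $r_{n}'$ belong to the linearized decomposition (\ref{decdelta}), which the paper uses for Lemma \ref{lem1} and Theorem \ref{th:cvgeqm} but not here, so they simply do not occur in this proof. Moreover, your claim that $\delta_{n}$ "is absorbed on the truncation event once $n$ is large" is false: in a fourth-moment recursion $\delta_{n}$ generates a cross term of order $\gamma_{n}\left\| V_{n}-\Gamma_{m}\right\|_{F}^{5}$ (since $\left\|\delta_{n}\right\|_{F}\leq 6C\left\| V_{n}-\Gamma_{m}\right\|_{F}^{2}$), which on $\left\{\left\| V_{n}-\Gamma_{m}\right\|_{F}\leq n^{(1-\alpha)/p'}\right\}$ is only bounded by $\gamma_{n}n^{(1-\alpha)/p'}\left\| V_{n}-\Gamma_{m}\right\|_{F}^{4}$; because the truncation radius diverges, this can never be absorbed by the available contraction $-c_{p'}\gamma_{n}n^{-(1-\alpha)/p'}\left\| V_{n}-\Gamma_{m}\right\|_{F}^{4}$. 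This is precisely why the linearized route fails at order four and why the gradient must be kept in nonlinear form; followed consistently, that form makes the phantom terms, and the problem, disappear.

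Second, the strong convexity constant is not uniform in the center $h$, contrary to what your second paragraph asserts: Proposition \ref{convexity} gives the lower bound $c_{m}/2$ on the spectrum of $\nabla_{h}^{2}G$ only for $h\in\mathcal{B}\left( m,\epsilon\right)$ and $V\in\mathcal{B}\left(\Gamma_{m},\epsilon '\right)$, the extension to all $V$ coming from the integral argument that produces the linear-growth bound. Consequently a truncation event defined only through $\left\| V_{n}-\Gamma_{m}\right\|_{F}$ does not license the contraction step; the event must also contain $\left\{\left\| \overline{m}_{n}-m\right\|\leq\epsilon\right\}$, exactly as in the paper's $A_{n,p'}$, and its complement is then handled by Markov's inequality with $\mathbb{E}\left[\left\| \overline{m}_{n}-m\right\|^{2p''}\right]\leq K_{p''}n^{-p''}$ — the same device you already deploy for the deviation of $V_{n}$, so nothing new is needed. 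You flag this point as "the genuine difficulty" but leave it unresolved; with this localization added and the spurious $\delta_{n}$, $r_{n}'$ terms removed, your plan coincides with the paper's proof.
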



We can now prove Theorem \ref{theol2l4}.

Let us choose an integer $p'$ such that $p' > 3/2$. Thus, $2~+~\alpha ~ -~3\frac{1-\alpha}{p'}~\geq ~3\alpha$, and applying Lemma \ref{lem2}, there are positive constants $C_{1,p'},C_{2,p'},c_{p'}$ and a rank $n_{p'}$ such that for all $n \geq n_{p'}$,
\begin{equation}
\mathbb{E}\left[ \left\| V_{n+1} - \Gamma_{m}\right\|_{F}^{4} \right] \leq \left( 1-c_{p'}\gamma_{n}n^{-\frac{1-\alpha}{p'}}\right)\mathbb{E}\left[ \left\| V_{n} - \Gamma_{m} \right\|_{F}^{4}\right] + \frac{C_{1,p'}}{n^{3\alpha}} + \frac{C_{2,p'}}{n^{2\alpha}}\mathbb{E}\left[ \left\| V_{n} - \Gamma_{m}\right\|_{F}^{2}\right] .
\end{equation}

Let us now choose $\beta \in (\alpha , 2\alpha)$ and  $p'$ such that $p' > \frac{1-\alpha}{2\alpha - \beta}$. Note that $ 3\alpha - \beta > \alpha + \frac{1-\alpha}{p'}$.  One can check that there is a rank $n_{p'}' \geq n_{p'}$ such that for all $n \geq n_{p'}'$,
\begin{align*}
(n+1)^{\alpha}C_{1}e^{-C_{1}'n^{1-\alpha}} + \frac{1}{2} + C_{3}2^{\beta +1}\frac{1}{(n+1)^{\beta - \alpha}} & \leq 1 , \\
\left( 1-c_{p'}\gamma_{n}n^{-\frac{1-\alpha}{p'}} \right) \left( \frac{n+1}{n}\right)^{\beta} + 2^{3\alpha}\frac{C_{1,p'} + C_{2,p'}}{(n+1)^{3\alpha - \beta}}  & \leq 1 .
\end{align*}
With the help of a strong induction, we are going to prove the announced results, that is to say  that there are positive constants $C_{p'},C_{\beta}$ such that $2C_{p'} \geq C_{\beta} \geq C_{p'} \geq 1$ and $C_{p'} \geq 2^{\alpha +1}C_{2}$ (with $C_{2}$ defined in Lemma \ref{lem1}), such that for all $n \geq 1$, 
\begin{align*}
\mathbb{E}\left[ \left\| V_{n} - \Gamma_{m} \right\|_{F}^{2}\right] & \leq \frac{C_{p'}}{n^{\alpha}} , \\
\mathbb{E}\left[ \left\| V_{n} - \Gamma_{m}\right\|_{F}^{4} \right] & \leq \frac{C_{\beta}}{n^{\beta}} .
\end{align*}

First, let us choose $C_{p'}$ and $C_{\beta}$ such that 
\begin{align*}
C_{p'} & \geq \max_{k \leq n_{p'}'}\left\lbrace k^{\alpha}\mathbb{E}\left[ \left\| V_{k} - \Gamma_{m} \right\|_{F}^{2}\right] \right\rbrace , \\
C_{\beta} &  \geq \max_{k \leq n_{p'}'}\left\lbrace k^{\beta}\mathbb{E}\left[ \left\| V_{n_{p'}'} - \Gamma_{m} \right\|_{F}^{4}\right] \right\rbrace .
\end{align*}
Thus, for all $k \leq n_{p'}'$,
\begin{align*}
\mathbb{E}\left[ \left\| V_{k} - \Gamma_{m} \right\|_{F}^{2}\right] & \leq \frac{C_{p'}}{k^{\alpha}} , \\
\mathbb{E}\left[ \left\| V_{k} - \Gamma_{m}\right\|_{F}^{4} \right] & \leq \frac{C_{\beta}}{k^{\beta}} .
\end{align*}
We suppose from now that $n \geq n_{p'}'$ and that previous inequalities are verified for all $k \leq n-1$. Applying Lemma \ref{lemmajordre} and by induction,
\begin{align*}
\mathbb{E}\left[ \left\| V_{n+1} - \Gamma_{m}\right\|_{F}^{2}\right] & \leq C_{1}e^{-C_{1}'n^{1-\alpha}} + \frac{C_{2}}{n^{\alpha}} + C_{3}\sup_{E((n+1)/2) +1 \leq k \leq n}\left\lbrace\mathbb{E}\left[ \left\| V_{k} - \Gamma_{m}\right\|_{F}^{4}\right] \right\rbrace \\
& \leq C_{1}e^{-C_{1}'n^{1-\alpha}} + \frac{C_{2}}{n^{\alpha}} + C_{3}\sup_{E((n+1)/2 ) +1 \leq k \leq n}\left\lbrace \frac{C_{\beta}}{k^{\beta}} \right\rbrace \\
& \leq C_{1}e^{-C_{1}'n^{1-\alpha}} + \frac{C_{2}}{n^{\alpha}} + C_{3}2^{\beta}\frac{C_{\beta}}{n^{\beta}}.
 \end{align*}
Since $2C_{p'} \geq C_{\beta} \geq C_{p'} \geq 1$ and since $C_{p'} \geq 2^{\alpha +1}C_{2}$, factorizing by $\frac{C_{p'}}{(n+1)^{\alpha}}$,
 \begin{align*}
\mathbb{E}\left[ \left\| V_{n+1} - \Gamma_{m}\right\|_{F}^{2}\right] &  \leq  C_{p'}C_{1}e^{-C_{1}'n^{1-\alpha}} + C_{p'}2^{-\alpha -1}\frac{1}{n^{\alpha}} + C_{3}2^{\beta}\frac{2C_{p'}}{n^{\beta}} \\
& \leq  \frac{C_{p}'}{(n+1)^{\alpha}}(n+1)^{\alpha}C_{1}e^{-C_{1}'n^{1-\alpha}} +  2^{-\alpha}\left(\frac{n}{n+1}\right)^{\alpha}\frac{C_{p'}}{2(n+1)^{\alpha}} + \frac{C_{3}2^{\beta +1}}{(n+1)^{\beta - \alpha}}\frac{C_{p'}}{(n+1)^{\alpha}} \\
& \leq \frac{C_{p}'}{(n+1)^{\alpha}}C_{1}(n+1)^{\alpha}e^{-C_{1}'n^{1-\alpha}} +  \frac{1}{2}\frac{C_{p'}}{(n+1)^{\alpha}} + C_{3}2^{\beta +1} \frac{1}{(n+1)^{\beta -\alpha}} \frac{C_{p'}}{(n+1)^{\alpha}} \\
& \leq \left( (n+1)^{\alpha}C_{1}e^{-C_{1}'n^{1-\alpha}} + \frac{1}{2} + C_{3}2^{\beta +1}\frac{1}{(n+1)^{\beta - \alpha}} \right) \frac{C_{p'}}{(n+1)^{\alpha}} .
\end{align*}
By definition of $n_{p'}'$, 
\begin{equation}
\mathbb{E}\left[ \left\| V_{n+1} - \Gamma_{m}\right\|_{F}^{2}\right] \leq \frac{C_{p'}}{(n+1)^{\alpha}}.
\end{equation}
In the same way, applying Lemma \ref{lem2} and by induction,
\begin{align*}
\mathbb{E}\left[ \left\| V_{n+1} - \Gamma_{m}\right\|_{F}^{4}\right]  & \leq \left( 1-c_{p'}\gamma_{n}n^{-\frac{1-\alpha}{p'}} \right) \mathbb{E}\left[ \left\| V_{n} - \Gamma_{m}\right\|_{F}^{4}\right] + \frac{C_{1,p'}}{n^{3\alpha}} + \frac{C_{2,p'}}{n^{2\alpha}}\mathbb{E}\left[ \left\| V_{n} - \Gamma_{m}\right\|_{F}^{2}\right]  \\
& \leq \left( 1-c_{p'}\gamma_{n}n^{-\frac{1-\alpha}{p'}} \right)\frac{C_{\beta}}{n^{\beta}}+ \frac{C_{1,p'}}{n^{3\alpha}} + \frac{C_{2,p'}}{n^{2\alpha}}\frac{C_{p'}}{n^{\alpha}}.
\end{align*}
Since $C_{\beta } \geq C_{p'} \geq 1$, factorizing by $\frac{C_{\beta}}{(n+1)^{\beta}}$,
\begin{align*}
\mathbb{E}\left[ \left\| V_{n+1} - \Gamma_{m}\right\|_{F}^{4}\right] & \leq    \left( 1-c_{p'}\gamma_{n}n^{-\frac{1-\alpha}{p'}} \right)\frac{C_{\beta}}{n^{\beta}}+ \left( C_{1,p'} + C_{2,p'}\right) \frac{C_{\beta}}{n^{3\alpha}} \\
& \leq  \left( 1-c_{p'}\gamma_{n}n^{-\frac{1-\alpha}{p'}} \right) \left( \frac{n+1}{n}\right)^{\beta}\frac{C_{\beta}}{n^{\beta}} + 2^{3\alpha}\frac{C_{1,p'} + C_{2,p'}}{(n+1)^{3\alpha - \beta}}\frac{C_{\beta}}{(n+1)^{\beta}} \\
& \leq \left( \left( 1-c_{p'}\gamma_{n}n^{-\frac{1-\alpha}{p'}} \right) \left( \frac{n+1}{n}\right)^{\beta} + 2^{3\alpha}\frac{C_{1,p'} + C_{2,p'}}{(n+1)^{3\alpha - \beta}} \right) \frac{C_{\beta}}{(n+1)^{\beta}}.
\end{align*}
By definition of $n_{p'}'$, 
\begin{equation}
\mathbb{E}\left[ \left\| V_{n+1} - \Gamma_{m}\right\|_{F}^{4}\right] \leq \frac{C_{\beta}}{(n+1)^{\beta}},
\end{equation}
which concludes the induction and the proof. 

\subsection{Proof of Theorem \ref{th:cvgeqm}}

In order to prove Theorem \ref{th:cvgeqm}, we first recall the following Lemma.

\begin{lem}[\cite{godichon2015}]\label{lemsumg}
Let $Y_{1},...,Y_{n}$ be random variables taking values in a normed vector space such that for all positive constant $q$ and for all $k \geq 1$, $\mathbb{E}\left[ \left\| Y_{k} \right\|^{q} \right] < \infty$. Then, for all real numbers $a_{1},...,a_{n}$ and for all integer $p$, we have
\begin{equation}
\mathbb{E}\left[ \left\| \sum_{k=1}^{n} a_{k}Y_{k} \right\|^{p} \right] \leq \left( \sum_{k=1}^{n} \left| a_{k} \right| \left( \mathbb{E}\left[ \left\| Y_{k} \right\|^{p} \right] \right)^{\frac{1}{p}} \right)^{p}
\end{equation}
\end{lem}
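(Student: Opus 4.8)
The plan is to read this as the triangle inequality for the $L^{p}$-norm of vector-valued random variables and to prove it in two elementary steps: a pointwise estimate in the underlying normed space, followed by the classical Minkowski inequality for nonnegative real-valued random variables.

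First I would fix an integer $p \geq 1$ together with reals $a_{1},\dots,a_{n}$, and argue pointwise (almost surely). Using only that $\left\| \cdot \right\|$ is a norm, that is its subadditivity together with absolute homogeneity, one obtains the almost sure bound
\[
\left\| \sum_{k=1}^{n} a_{k} Y_{k} \right\| \leq \sum_{k=1}^{n} \left| a_{k} \right| \left\| Y_{k} \right\| .
\]
The hypothesis that $\mathbb{E}\left[ \left\| Y_{k} \right\|^{q} \right] < \infty$ for every $q$ ensures in particular that $\mathbb{E}\left[ \left\| Y_{k} \right\|^{p} \right] < \infty$, so that every $L^{p}$ norm appearing below is finite and Minkowski's inequality is legitimately applicable; this finiteness is really the only place the assumptions are used.

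Second, both sides of the displayed bound are nonnegative real random variables and $t \mapsto t^{p}$ is nondecreasing on $[0,\infty)$, so taking expectations preserves the inequality:
\[
\mathbb{E}\left[ \left\| \sum_{k=1}^{n} a_{k} Y_{k} \right\|^{p} \right] \leq \mathbb{E}\left[ \left( \sum_{k=1}^{n} \left| a_{k} \right| \left\| Y_{k} \right\| \right)^{p} \right] .
\]
To the right-hand side I would apply the usual Minkowski (triangle) inequality for the scalar $L^{p}(\Omega)$-norm to the finite sum of nonnegative random variables $\left| a_{k} \right| \left\| Y_{k} \right\|$, which gives
\[
\left( \mathbb{E}\left[ \left( \sum_{k=1}^{n} \left| a_{k} \right| \left\| Y_{k} \right\| \right)^{p} \right] \right)^{1/p} \leq \sum_{k=1}^{n} \left( \mathbb{E}\left[ \left| a_{k} \right|^{p} \left\| Y_{k} \right\|^{p} \right] \right)^{1/p} = \sum_{k=1}^{n} \left| a_{k} \right| \left( \mathbb{E}\left[ \left\| Y_{k} \right\|^{p} \right] \right)^{1/p} .
\]
Combining the last two displays and raising to the power $p$ yields exactly the claimed bound.

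There is no genuine obstacle here: the result is simply a repackaging of Minkowski's inequality, and the only point needing attention is that the moment assumptions guarantee finiteness of every term so that the $L^{p}$ triangle inequality applies. The statement is phrased with the $p$-th power present on both sides, rather than as an $L^{p}$-norm inequality, purely for direct convenience in the proof of Theorem \ref{th:cvgeqm}; accordingly I would just raise the $L^{p}$-norm inequality to the power $p$ as the final step.
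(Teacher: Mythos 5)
Your proof is correct: the pointwise triangle inequality in the normed space followed by the scalar Minkowski inequality in $L^{p}(\Omega)$, raised to the power $p$, is exactly the standard argument, and the paper itself states Lemma~\ref{lemsumg} without proof, importing it from \cite{godichon2015}, where this is the intended derivation. The only cosmetic remark is that the moment hypothesis is not strictly needed for Minkowski's inequality to hold (it is valid for nonnegative random variables with $p \geq 1$ even when both sides are infinite), so it serves merely to guarantee finiteness in the applications, as you correctly observe.
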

We can now prove Theorem \ref{th:cvgeqm}.
Let us rewrite decomposition (\ref{decdelta}) as follows
\begin{equation}
\nabla_{m}^{2}G\left( \Gamma_{m} \right) \left( V_{n} - \Gamma_{m} \right) = \frac{T_{n}}{\gamma_{n}} - \frac{T_{n+1}}{\gamma_{n}} + \xi_{n+1} - r_{n} - r_{n}' - \delta_{n},
\end{equation}
with $T_{n} := V_{n} - \Gamma_{m}$. As in \cite{Pel00}, we sum these equalities, apply Abel's transform and divide by $n$ to get
\begin{align*}
\nabla_{m}^{2}G\left( \Gamma_{m} \right) \left( \overline{V}_{n} - \Gamma_{m} \right) &= \frac{1}{n}\left(\frac{T_{1}}{\gamma_{1}} - \frac{T_{n+1}}{\gamma_{n+1}} + \sum_{k=2}^{n} T_{k} \left( \frac{1}{\gamma_{k}} - \frac{1}{\gamma_{k-1}}\right) - \sum_{k=1}^{n} \delta_{k} - \sum_{k=1}^{n} r_{k} - \sum_{k=1}^{n} r_{k}' + \sum_{k=1}^{n} \xi_{k+1}\right).
\end{align*}
We now bound the quadratic mean of each term at the right-hand side of previous equality. First, we have $\frac{1}{n^{2}}\mathbb{E}\left[ \left\| \frac{T_{1}}{\gamma_{1}}\right\|_{F}^{2}\right]= o \left( \frac{1}{n} \right)$. Applying Theorem \ref{theol2l4},
\begin{align*}
\frac{1}{n^{2}}\mathbb{E}\left[ \left\| \frac{T_{n+1}}{\gamma_{n}}\right\|_{F}^{2}\right] & \leq \frac{1}{n^{2}}\frac{C'c_{\gamma}^{-2}}{n^{-\alpha}} = o \left( \frac{1}{n}\right) . 
\end{align*}
Moreover, since $\left| \gamma_{k}^{-1} - \gamma_{k-1}^{-1}\right| \leq 2\alpha c_{\gamma}^{-1}k^{\alpha -1}$, the application of Lemma \ref{lemsumg} and Theorem~\ref{theol2l4} gives
\begin{align*}
\frac{1}{n^{2}}\mathbb{E}\left[ \left\| \sum_{k=2}^{n} \left( \gamma_{k}^{-1} - \gamma_{k-1}^{-1}\right)T_{k} \right\|_{F}^{2} \right] & \leq \frac{1}{n^{2}}\left( \sum_{k=2}^{n} \left| \gamma_{k}^{-1} - \gamma_{k-1}^{-1} \right| \sqrt{\mathbb{E}\left[ \left\| T_{k} \right\|_{F}^{2}\right]} \right)^{2} \\
& \leq \frac{1}{n^{2}}4\alpha^{2}c_{\gamma}^{-2}C'\left( \sum_{k=2}^{n} \frac{1}{k^{1-\alpha /2}} \right)^{2} \\
& = O \left( \frac{1}{n^{2-\alpha}}\right) \\
& = o \left( \frac{1}{n} \right) ,
\end{align*}
since $\alpha < 1$. In the same way, since $\left\| \delta_{n} \right\|_{F} \leq 6C \left\| T_{n} \right\|_{F}^{2}$, applying Lemma \ref{lemsumg} and Theorem~\ref{theol2l4} with $\beta > 1$, 
\begin{align*}
\frac{1}{n^{2}}\mathbb{E}\left[ \left\| \sum_{k=1}^{n} \delta_{k} \right\|_{F}^{2}\right] & \leq \frac{1}{n^{2}}\left( \sum_{k=1}^{n} \sqrt{\mathbb{E}\left[ \left\| \delta_{k} \right\|_{F}^{2}\right]} \right)^{2} \\
& \leq \frac{36C^{2}}{n^{2}}\left( \sum_{k=1}^{n} \sqrt{\mathbb{E}\left[ \left\| T_{k} \right\|_{F}^{4}\right]} \right)^{2} \\
& \leq \frac{36C^{2}C_{\beta}}{n^{2}}\left( \sum_{k=1}^{n} \frac{1}{k^{\beta /2}} \right)^{2} \\
& = O \left( \frac{1}{n^{\beta}}\right) \\
& = o \left( \frac{1}{n}\right) ,
\end{align*}
Moreover, let $D := 12 \left( \sqrt{C} + C \sqrt{\left\| \Gamma_{m}\right\|_{F}} \right)$. Since $\left\| r_{n} \right\|_{F} \leq D \left\| \overline{m}_{n} - m \right\|$, and since there is a positive constant $C''$ such that for all $n\geq 1$, $\mathbb{E}\left[ \left\| \overline{m}_{n}- m \right\|^{2}\right] \leq C''n^{-1}$,
\begin{align*}
\frac{1}{n^{2}}\mathbb{E}\left[ \left\| \sum_{k=1}^{n} r_{k} \right\|_{F}^{2}\right] & \leq \frac{1}{n^{2}}\left( \sum_{k=1}^{n} \sqrt{\mathbb{E}\left[ \left\| r_{k} \right\|_{F}^{2}\right]} \right)^{2} \\
& \leq \frac{D^{2}}{n^{2}}\left( \sum_{k=1}^{n} \sqrt{\mathbb{E}\left[ \left\| \overline{m}_{n}-m \right \|^{2}\right]} \right) \\
& \leq \frac{D^{2}C''}{n^{2}}\left( \sum_{k=1}^{n}\frac{1}{k^{1/2}}\right)^{2} \\
& = O \left( \frac{1}{n}\right) .
\end{align*}
Since $\left\| r_{n}' \right\|_{F} \leq C_{0} \left\| \overline{m}_{n} - m \right\| \left\| V_{n} - \Gamma_{m}\right\|_{F}^{2}$ with $C_{0} := 12\left( C\sqrt{\left\| \Gamma_{m}\right\|_{F}}  + C^{3/4} \right) $,  Cauchy-Schwarz's inequality and Lemma \ref{lemsumg} give
\begin{align*}
\frac{1}{n^{2}}\mathbb{E}\left[ \left\| \sum_{k=1}^{n} r_{n}' \right\|_{F}^{2}\right] & \leq \frac{1}{n^{2}}\left( \sum_{k=1}^{n}\sqrt{\mathbb{E}\left[ \left\| r_{n}' \right\|_{F}^{2}\right]} \right)^{2} \\
& \leq \frac{C_{0}^{2}}{n^{2}}\left( \sum_{k=1}^{n} \sqrt{\mathbb{E}\left[ \left\| \overline{m}_{n} - m \right\|^{2}\left\| V_{n} - \Gamma_{m}\right\|_{F}^{2}\right]}\right)^{2} \\
& \leq \frac{C_{0}^{2}}{n^{2}}\left( \sum_{k=1}^{n} \left( \mathbb{E}\left[ \left\| \overline{m}_{n} - m \right\|^{4}\right] \right)^{\frac{1}{4}}\left( \mathbb{E}\left[ \left\| V_{n} - \Gamma_{m} \right\|_{F}^{4} \right] \right)^{\frac{1}{4}} \right)^{2} .
\end{align*}
Applying Theorem 4.2 in \cite{godichon2015} and Theorem 3.3,
\begin{align*}
\frac{1}{n^{2}}\mathbb{E}\left[ \left\| \sum_{k=1}^{n} r_{n}' \right\|_{F}^{2}\right] & \leq \frac{C_{0}^{2}\sqrt{C_\beta}\sqrt{K_{2}}}{n^{2}}\left( \sum_{k=1}^{n} \frac{1}{k^{\beta /4 + 1 /2}}\right)^{2} \\
& = O \left( \frac{1}{n^{1 + \beta /2}}\right) \\
& = o \left( \frac{1}{n}\right) ,
\end{align*}
since $\beta >0$. Finally, one can easily check that $\mathbb{E}\left[ \left\| \xi_{n+1}\right\|_{F}^{2} \right] \leq 1$, and since $\left( \xi_{n} \right)$ is a sequence of martingale differences adapted to the filtration $\left( \mathcal{F}_{n} \right)$,
\begin{align*}
\frac{1}{n^{2}}\mathbb{E}\left[ \left\| \sum_{k=1}^{n} \xi_{k+1} \right\|_{F}^{2}\right] & = \frac{1}{n^{2}}\left( \sum_{k=1}^{n} \mathbb{E}\left[ \left\| \xi_{k+1}\right\|_{F}^{2}\right] + 2\sum_{k=1}^{n}\sum_{k'=k+1}^{n}\mathbb{E}\left[ \left\langle \xi_{k+1},\xi_{k'+1}\right\rangle_{F} \right] \right) \\
& = \frac{1}{n^{2}}\left( \sum_{k=1}^{n} \mathbb{E}\left[ \left\| \xi_{k+1}\right\|_{F}^{2}\right] + 2\sum_{k=1}^{n}\sum_{k'=k+1}^{n}\mathbb{E}\left[ \left\langle \xi_{k+1},\mathbb{E}\left[ \xi_{k'+1}\Big| \mathcal{F}_{k'}\right] \right\rangle_{F} \right] \right) \\
& = \frac{1}{n^{2}}\sum_{k=1}^{n} \mathbb{E}\left[ \left\| \xi_{k+1} \right\|_{F}^{2}\right] \\
& \leq \frac{1}{n} .
\end{align*}
Thus, there is a positive constant $K$ such that for all $n \geq 1$,
\[
\mathbb{E}\left[ \left\| \nabla_{m}^{2}G \left( \Gamma_{m}\right) \left( \overline{V}_{n} - \Gamma_{m}\right) \right\|_{F}^{2}\right] \leq \frac{K}{n}.
\]
Let $\lambda_{\min}$ be the smallest eigenvalue of $\nabla_{m}^{2}G \left( \Gamma_{m}\right)$. We have, with  Proposition B.1 in the supplementary file, that  $\lambda_{\min}> 0$  and the announced result is proven,
\begin{align*}
\mathbb{E}\left[ \left\| \overline{V}_{n} - \Gamma_{m}\right\|_{F}^{2}\right] &\leq \frac{K}{\lambda_{\min}^{2}n}.
\end{align*}


\section{Concluding remarks}

The simulation study and the illustration on real data indicate that performing robust principal components analysis via  the median covariation matrix, which can bring new information compared to classical PCA, is an interesting alternative to more classical robust principal components analysis techniques. The use of recursive algorithms permits to perform robust PCA on very large datasets, in which outlying observations may be hard to detect. Another interest of the use of such sequential  algorithms is that estimation of the median covariation matrix as well as the principal components can be performed online with automatic update at each new observation and without being obliged to store all the data in memory. A simple modification of the averaged stochastic gradient algorithm is proposed that ensures non negativeness of the estimated covariation matrices. This modified algorithms has better performances on our simulated data.

A deeper study of the asymptotic behaviour of the recursive algorithms would certainly deserve further investigations. Proving the asymptotic normality and obtaining the limiting variance of the sequence of estimators $\overline{V}_n$  when $m$ is unknown would be of great interest. This is a challenging issue that is beyond the scope of the paper and would require  to study the joint weak convergence of the two simultaneous recursive averaged estimators of $m$ and $\Gamma_m$.

The use of the MCM could be interesting  to robustify the estimation in many different statistical models, particularly with functional data. For example, it could be employed as an alternative to robust functional projection pursuit in    
 robust functional time series prediction or for  robust estimation in functional linear regression, with the introduction  of the median cross-covariation matrix.

\medskip

\noindent \textbf{Acknowledgements.} We  thank the company M\'ediam\'etrie for allowing us to illustrate our methodologies with their data. We also thank Dr. Peggy C\'enac for a careful reading of the proofs.


\begin{appendix}

\section{Estimating the median covariation matrix with Weiszfeld's algorithm}
Suppose we have a fixed size sample $X_1, \ldots, X_n$  and we want to estimate the geometric median.

The iterative Weiszfeld's algorithm relies on the fact that the solution $m^*_n$ of the following optimization problem 
\begin{align*}
\min_{\mu \in H} & \sum_{i=1}^n \| X_i - \mu \|
\end{align*}
satisfies, when $m_n^* \neq X_i$, for all $i=1, \ldots, n$
\begin{align*}
m_n^*  &= \sum_{i=1}^n w_i \left({m}_n^{*} \right) \ X_i
\end{align*}
where the weights $w_i(x)$ are defined by
\[
w_i(x) = \frac{ \nrm{X_i-x}^{-1}}{\displaystyle \sum_{j=1}^n  \nrm{X_j-x}^{-1}}.
\]

Weiszfeld's algorithm is based on the following iterative scheme.   Consider first a pilot estimator $\widehat{m}^{(0)}$ of $m$. At step $(e)$, a new approximation $\widehat{m}_n^{(e+1)}$ to $m$ is given by
\begin{align}
\widehat{m}_n^{(e+1)} &= \sum_{i=1}^n w_i \left(\widehat{m}_n^{(e)} \right) \ X_i . 
\label{def:weiszfeldMCM}
\end{align}
 The iterative procedure is stopped when $\nrm{\widehat{m}_n^{(e+1)} - \widehat{m}_n^{(e)}} \leq \epsilon$, for some precision $\epsilon$ known in advance. The final value of the algorithm is denoted by $\widehat{m}_n$. 

The estimator of the MCM is computed similarly. Suppose $\widehat{\Gamma}^{(e)}$ has been calculated at step $(e)$, then at step $(e+1)$, the new approximation  $\widehat{\Gamma}^{(e+1)}$ to $\Gamma_m$ is defined by
\begin{align}
\widehat{\Gamma}^{(e+1)}_n &=  \sum_{i=1}^n W_i \left(\widehat{\Gamma}^{(e)}\right) (X_i-\widehat{m}_n)(X_i-\widehat{m}_n)^T. 
\label{def:algoiter}
\end{align}
The procedure is stopped when $\nrm{\widehat{\Gamma}^{(e+1)} - \widehat{\Gamma}^{(e)}}_F \leq \epsilon$, for some precision $\epsilon$ fixed in advance.

Note that by construction, this algorithm leads to an estimated median covariation matrix that is always non negative.

\section{Convexity results}
In this section, we first give and recall some convexity properties of functional $G_{h}$. The following one gives some information on the spectrum of the Hessian of $G$.
\begin{prop}\label{convexity}
Under assumptions 1-3(b), for all $h \in H$ and $V \in \mathcal{S}(H)$, $\mathcal{S}(H)$ admits an orthonormal basis composed of eigenvectors of $\nabla_{h}^{2}G(V)$. Let us denote by $\left\lbrace \lambda_{h,V,i} , i \in \mathbb{N}\right\rbrace$ the set of eigenvalues of $\nabla_{h}^{2}G(V)$. For all $i \in \mathbb{N}$,
\[
0 \leq \lambda_{h,V,i} \leq C.
\] 
Moreover, there is a positive constant $c_{m}$ such that for all $i \in \mathbb{N}$,
\[
0<c_{m} \leq \lambda_{m,\Gamma_{m},i} \leq C.
\]
Finally, by continuity, there are positive constants $\epsilon , \epsilon '$ such that for all $h \in \mathcal{B}\left( m , \epsilon \right)$ and $V \in \mathcal{B}\left( \Gamma_{m}, \epsilon ' \right)$, and for all $i \in \mathbb{N}$,
\[
\frac{1}{2}c_{m} \leq \lambda_{h,V,i} \leq C.
\]
\end{prop}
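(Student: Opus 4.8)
The plan is to treat $\nabla_h^2 G(V)$ as a self-adjoint operator on the Hilbert space $\mathcal{S}(H)$ and to analyse it through its decomposition into a scalar multiple of the identity minus a compact operator. Writing $A := Y(h)-V$, the integrand in (\ref{def:HeV}) is $\frac{1}{\nrm{A}_F}\bigl(I_{\mathcal{S}(H)} - \nrm{A}_F^{-2}\,A\otimes_F A\bigr)$, which is self-adjoint (the rank-one piece $A\otimes_F A$ is self-adjoint), hence so is $\nabla_h^2 G(V)$. I would first record the splitting
\[
\nabla_h^2 G(V) = \EE{\nrm{A}_F^{-1}}\, I_{\mathcal{S}(H)} - \EE{\nrm{A}_F^{-3}\,A\otimes_F A} =: a(h,V)\, I_{\mathcal{S}(H)} - K(h,V),
\]
and observe that $K(h,V)$ is a positive self-adjoint operator of trace $\EE{\nrm{A}_F^{2}/\nrm{A}_F^{3}} = \EE{\nrm{A}_F^{-1}} = a(h,V)\le C$ by Assumption~\ref{eq:invMomentCov}(a). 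Thus $K(h,V)$ is trace-class, in particular compact, and the spectral theorem for compact self-adjoint operators yields an orthonormal basis of eigenvectors of $K(h,V)$, which are simultaneously eigenvectors of $\nabla_h^2 G(V)$. For the bounds $0\le\lambda_{h,V,i}\le C$ it then suffices to evaluate the quadratic form: for any unit $W\in\mathcal{S}(H)$ one has $\scal{\nabla_h^2 G(V)W,\,W}_F = \EE{\nrm{A}_F^{-1}\bigl(1 - \nrm{A}_F^{-2}\scal{A,W}_F^2\bigr)}$, and by Cauchy--Schwarz the bracket lies in $[0,1]$, so the form lies in $[0,a(h,V)]\subseteq[0,C]$.

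For the strict lower bound at $(m,\Gamma_m)$ I would exploit that the eigenvalues of $a\, I_{\mathcal{S}(H)} - K$ are $a-\mu_i$, where $\mu_1\ge\mu_2\ge\cdots\downarrow 0$ are the eigenvalues of $K$. Since $\mu_1=\nrm{K}_{op}$ is attained by some unit eigenvector $W_0$, the smallest eigenvalue of $\nabla_m^2 G(\Gamma_m)$ equals $a-\nrm{K}_{op} = \scal{\nabla_m^2 G(\Gamma_m)W_0,\,W_0}_F$; that is, the infimum of the quadratic form over the unit sphere is genuinely \emph{attained}. This is the decisive structural point, and the main obstacle: in infinite dimension mere pointwise positivity of the form would not produce a spectral gap, and it is the compact-perturbation form that turns the infimum into an actual eigenvalue. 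It then remains to show this value is positive. Writing the bracket as $\nrm{A}_F^{-1}\sin^2\theta$, with $\theta$ the angle between $A=Y(m)-\Gamma_m$ and $W_0$, the expectation vanishes only if $Y(m)-\Gamma_m$ is almost surely collinear with $W_0$, i.e.\ the law of $Y$ is carried by a single line of $\mathcal{S}(H)$ --- precisely the degeneracy excluded by Assumption~\ref{eq:supportCdtnCov}. This gives $c_m := \lambda_{\min}\bigl(\nabla_m^2 G(\Gamma_m)\bigr)>0$.

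Finally, the uniform bound on a neighbourhood follows from continuity. Since $\lambda_{\min}(h,V)=a(h,V)-\nrm{K(h,V)}_{op}$ is attained for every $(h,V)$ near $(m,\Gamma_m)$ and $A\mapsto\lambda_{\min}(A)$ is $1$-Lipschitz for the operator norm, it is enough to prove that $(h,V)\mapsto\nabla_h^2 G(V)$ is continuous in operator norm at $(m,\Gamma_m)$. For this I would invoke a Vitali-type convergence argument on the operator-valued integral: the integrand is continuous in $(h,V)$ for almost every outcome (the distribution of $A$ having no atoms), its operator norm is dominated by $\nrm{A}_F^{-1}$, and Assumption~\ref{eq:invMomentCov}(b) bounds these terms uniformly in $L^2$, which gives uniform integrability and hence convergence of the expectation in operator norm. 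Continuity then yields $\lambda_{\min}(h,V)\to c_m$, so that $\lambda_{\min}(h,V)\ge \tfrac12 c_m$ on suitable balls $\mathcal{B}(m,\epsilon)\times\mathcal{B}(\Gamma_m,\epsilon')$; together with the universal upper bound $C$ from the first step, this is the asserted two-sided control of all eigenvalues.
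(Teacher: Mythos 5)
Your proof is correct and follows essentially the same approach as the paper's, which is not written out but deferred to \cite{CCZ11}: writing the Hessian as a scalar multiple of the identity minus a non-negative trace-class (hence compact) operator, applying the spectral theorem, excluding the rank-one/collinear degeneracy via Assumption 2 (read, as the paper does, as ``the support of $Y$ is not concentrated on a line''), and using operator-norm continuity of $(h,V)\mapsto\nabla_h^2 G(V)$ to get the uniform lower bound near $(m,\Gamma_m)$. Your two small variations --- obtaining positivity from the quadratic form at the minimizing eigenvector rather than from the trace identity, and proving continuity by uniform integrability rather than by an explicit Lipschitz estimate --- stay within that same template.
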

The proof is  very similar to the one in \cite{CCZ11} and consequently it is not given here. Furthermore, as in \cite{CCG2015}, it ensures the local strong convexity as shown in the  following corollary.

\begin{cor}\label{corforconv}
Under assumptions 1-3(b), for all positive constant $A$, there is a positive constant $c_{A}$ such that for all $V \in \mathcal{B}\left( \Gamma_{m} , A \right)$ and $h \in \mathcal{B}\left( m, \epsilon \right)$,
\[
\left\langle \nabla_{h}G(V) - \nabla_{h}G(\Gamma_{m}) , V - \Gamma_{m} \right\rangle_{H} \geq c_{A} \left\| V - \Gamma_{m} \right\|_{F}^{2}.
\] 
\end{cor}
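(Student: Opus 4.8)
The plan is to derive the strong-convexity inequality from the spectral bounds on the Hessian furnished by Proposition~\ref{convexity}, using the fundamental theorem of calculus along the segment joining $\Gamma_m$ to $V$. Since $G_h$ is twice Fr\'echet differentiable under Assumption~\ref{eq:invMomentCov}(b), for every $h \in \mathcal{B}(m,\epsilon)$ and $V \in \mathcal{S}(H)$ one may write
\[
\nabla_{h}G(V) - \nabla_{h}G(\Gamma_{m}) = \int_{0}^{1} \nabla_{h}^{2}G\big(\Gamma_{m} + t(V-\Gamma_{m})\big)\left(V - \Gamma_{m}\right)\, dt ,
\]
so that pairing with $V-\Gamma_m$ in the Frobenius inner product gives
\[
\left\langle \nabla_{h}G(V) - \nabla_{h}G(\Gamma_{m}), V - \Gamma_{m} \right\rangle_{F} = \int_{0}^{1} \left\langle \nabla_{h}^{2}G\big(\Gamma_{m}+t(V-\Gamma_{m})\big)\left(V-\Gamma_{m}\right), V-\Gamma_{m}\right\rangle_{F}\, dt .
\]
By Proposition~\ref{convexity} the integrand is nonnegative for every $t$ (all Hessian eigenvalues are $\geq 0$), and it is bounded below by $\frac{1}{2}c_{m}\left\|V-\Gamma_m\right\|_F^2$ as soon as the intermediate point $\Gamma_m + t(V-\Gamma_m)$ lies in $\mathcal{B}(\Gamma_m,\epsilon')$.

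First I would treat the case $\left\|V - \Gamma_m\right\|_F \leq \epsilon'$: the whole segment then stays in $\mathcal{B}(\Gamma_m,\epsilon')$, the lower bound $\frac{1}{2}c_m$ applies for every $t$, and integrating yields at once $\left\langle \nabla_{h}G(V) - \nabla_{h}G(\Gamma_m), V-\Gamma_m\right\rangle_F \geq \frac{1}{2}c_m\left\|V-\Gamma_m\right\|_F^2$. For the remaining case $\epsilon' < \left\|V-\Gamma_m\right\|_F \leq A$, only the initial portion $t\in[0,t^{\ast}]$ with $t^{\ast}=\epsilon'/\left\|V-\Gamma_m\right\|_F$ stays inside $\mathcal{B}(\Gamma_m,\epsilon')$; discarding the nonnegative contribution of $t>t^{\ast}$ and keeping the spectral lower bound on $[0,t^{\ast}]$ gives
\[
\left\langle \nabla_{h}G(V)-\nabla_{h}G(\Gamma_m), V-\Gamma_m\right\rangle_F \geq \frac{1}{2}c_m\, t^{\ast}\,\left\|V-\Gamma_m\right\|_F^2 = \frac{1}{2}c_m\,\epsilon'\,\left\|V-\Gamma_m\right\|_F .
\]
I would then convert this \emph{linear} lower bound into the desired quadratic one by invoking $\left\|V-\Gamma_m\right\|_F \leq A$, i.e. $\left\|V-\Gamma_m\right\|_F \geq \left\|V-\Gamma_m\right\|_F^2/A$, obtaining $\left\langle\cdots\right\rangle_F \geq \frac{c_m\epsilon'}{2A}\left\|V-\Gamma_m\right\|_F^2$. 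Setting $c_A = \min\big(\frac{1}{2}c_m,\, \frac{c_m\epsilon'}{2A}\big)$ then covers both cases, uniformly over $h \in \mathcal{B}(m,\epsilon)$.

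The only genuine obstacle is the region where $V$ lies beyond the radius $\epsilon'$ on which the uniform positive spectral lower bound of Proposition~\ref{convexity} is available: there the Hessian is merely nonnegative, so global convexity alone yields no quadratic control. The segment-integral decomposition, together with the fact that on the bounded ball $\mathcal{B}(\Gamma_m,A)$ a lower bound linear in $\left\|V-\Gamma_m\right\|_F$ can be downgraded to a quadratic one, is exactly what resolves this point; the unavoidable price is that the constant $c_A$ degrades like $1/A$ as $A$ grows. Everything else, namely differentiating under the expectation and the spectral estimates themselves, is already supplied by Assumption~\ref{eq:invMomentCov}(b) and Proposition~\ref{convexity}.
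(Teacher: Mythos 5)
Your proof is correct and is essentially the paper's own argument: the paper defers the proof of this corollary to \cite{CCG2015}, but the identical segment-integral decomposition with the cutoff $t^{\ast}=\epsilon'/\left\|V-\Gamma_{m}\right\|_{F}$ (keeping the spectral lower bound of Proposition~\ref{convexity} on $[0,t^{\ast}]$, discarding the nonnegative remainder, and downgrading the resulting linear bound to a quadratic one on the ball of radius $A$) is exactly what the paper itself uses in the proof of Lemma 5.4 when bounding $B_{n}$ and $B_{n}'$. No gaps to report.
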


Finally, the following lemma gives an upper bound on the remainder term in the Taylor's expansion of the gradient.
\begin{lem}\label{lemdelta}
Under assumptions 1-3(b), for all $h \in H$ and $V \in \mathcal{S}(H)$, 
\begin{equation}
\left\| \nabla G_{h}(V) - \nabla G_{h}\left(\Gamma_{m}\right) - \nabla_{h}^{2}G\left( \Gamma_{m} \right)\left( V - \Gamma_{m} \right) \right\|_{F} \leq 6C  \left\| V - \Gamma_{m} \right\|_{F}^{2}. 
\end{equation}
\end{lem}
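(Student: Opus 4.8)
The plan is to reduce the statement to a deterministic, pointwise second-order Taylor estimate for the normalisation map and then integrate it against the law of $X$. Write $g(W) \egaldef W/\nrm{W}_F$ for $W \in \mathcal{S}(H) \setminus \{0\}$ and $Y(h) = (X-h)(X-h)^T$. From (\ref{def:gradV}) and (\ref{def:HeV}) one has $\nabla G_h(V) = \EE{g(V - Y(h))}$ and $\nabla_h^2 G(V) = \EE{Dg(V - Y(h))}$, where $Dg(W) = \nrm{W}_F^{-1}\left(I_{\mathcal{S}(H)} - \nrm{W}_F^{-2}\, W \otimes_F W\right)$. Hence, setting $W_1 \egaldef V - Y(h)$ and $W_0 \egaldef \Gamma_m - Y(h)$ (so that $W_1 - W_0 = V - \Gamma_m$), the operator whose norm we must bound is the expectation of the pointwise remainder $R \egaldef g(W_1) - g(W_0) - Dg(W_0)(W_1 - W_0)$, and by Jensen's inequality it suffices to bound $\EE{\nrm{R}_F}$.

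First I would obtain a closed form for $R$. Since $g$ is $0$-homogeneous, $Dg(W_0)W_0 = 0$, so $Dg(W_0)(W_1-W_0) = Dg(W_0)W_1 = \nrm{W_0}_F^{-1} W_1^{\perp}$, where $W_1^{\perp} \egaldef W_1 - \scal{\widehat{W}_0, W_1}_F\, \widehat{W}_0$ is the component of $W_1$ orthogonal to $\widehat{W}_0 \egaldef W_0/\nrm{W_0}_F$. Writing $a \egaldef \nrm{W_0}_F$, $b \egaldef \nrm{W_1}_F$ and $p \egaldef \scal{\widehat{W}_0, W_1}_F$ and decomposing $W_1 = p\widehat{W}_0 + W_1^{\perp}$, a short computation collapses the three terms to
\[
R = \left(\frac{p}{b} - 1\right)\widehat{W}_0 + \left(\frac{1}{b} - \frac{1}{a}\right) W_1^{\perp},
\]
so that, by orthogonality, $\nrm{R}_F^2 = \left(\frac{p}{b} - 1\right)^2 + \left(\frac{1}{b} - \frac{1}{a}\right)^2 (b^2 - p^2)$.

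The heart of the argument is then the purely algebraic claim $\nrm{R}_F \le 6\, \nrm{W_1 - W_0}_F^2 / a^2$. Using $\nrm{W_1 - W_0}_F^2 = a^2 + b^2 - 2ap$ and the scale invariance of both sides, one may set $a = 1$ and is left with a two-variable inequality in $b > 0$ and $p \in [-b,b]$, which is elementary to verify. I would stress that this direct route is what makes the bound robust: the naive estimate through the integral remainder $\int_0^1 (1-t)\, D^2 g\left(W_0 + t(W_1-W_0)\right)\,dt$ fails because $\nrm{D^2 g(W)}$ grows like $\nrm{W}_F^{-2}$ and the segment $[W_0,W_1]$ may pass arbitrarily close to the origin; the explicit formula for $R$ sidesteps this difficulty entirely, and checking the constant $6$ in the two extreme regimes ($\nrm{W_1-W_0}_F$ small, where the second-order behaviour dominates, versus large, where the crude bound $\nrm{R}_F \le 2 + \nrm{W_1-W_0}_F/a$ already suffices) is the one place requiring care.

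Finally I would take expectations. With $a = \nrm{Y(h) - \Gamma_m}_F$ and $\nrm{W_1 - W_0}_F = \nrm{V - \Gamma_m}_F$, the pointwise bound gives $\nrm{R}_F \le 6\,\nrm{V-\Gamma_m}_F^2\, \nrm{Y(h)-\Gamma_m}_F^{-2}$, whence by Assumption~\ref{eq:invMomentCov}(b) applied with the operator $\Gamma_m$,
\[
\nrm{\nabla G_h(V) - \nabla G_h(\Gamma_m) - \nabla_h^2 G(\Gamma_m)(V-\Gamma_m)}_F \le \EE{\nrm{R}_F} \le 6\, \nrm{V-\Gamma_m}_F^2\, \EE{\nrm{Y(h)-\Gamma_m}_F^{-2}} \le 6C\, \nrm{V-\Gamma_m}_F^2,
\]
which is the claim. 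The only genuinely delicate point is the global pointwise inequality of the third paragraph; everything else is bookkeeping.
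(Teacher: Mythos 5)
Your proof is correct, but it takes a genuinely different route from the paper's. The paper never works pathwise: it applies the fundamental theorem of calculus at the level of the expected functional, writing $\nabla G_{h}(V) - \nabla G_{h}(\Gamma_{m}) = \int_{0}^{1}\nabla_{h}^{2}G\left(\Gamma_{m}+t\left( V -\Gamma_{m} \right)\right)\left( V - \Gamma_{m} \right)dt$, and then bounds the integrand's deviation from $\nabla_{h}^{2}G(\Gamma_{m})(V-\Gamma_{m})$ by $6C\left\| V - \Gamma_{m}\right\|_{F}^{2}$, deferring that Hessian-increment bound to the argument of Lemma 5.1 in \cite{CCG2015}; the point is that since the Hessian is already an expectation, Assumption 3(b) (which holds uniformly over \emph{all} $V \in \mathcal{S}(H)$) tames the singularity at every point of the segment $[\Gamma_{m},V]$, so the segment-through-the-origin problem you worry about never arises. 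You instead exchange linearization and expectation: Jensen reduces the claim to the pointwise Taylor remainder of the normalization map $g(W)=W/\nrm{W}_F$, for which your closed form (using $Dg(W_{0})W_{0}=0$ by $0$-homogeneity, and the orthogonal decomposition giving $\nrm{R}_F^2 = (p/b-1)^2 + (1/b-1/a)^2(b^2-p^2)$) is exact, and the scale-invariant two-variable inequality $\nrm{R}_F \le 6\nrm{W_1-W_0}_F^2/a^2$ then finishes via Assumption 3(b) applied only at $V=\Gamma_{m}$. What each approach buys: the paper's is shorter and avoids any pointwise analysis, at the price of invoking an external lemma and using the assumption along the whole segment; yours is self-contained, needs only $\EE{\nrm{Y(h)-\Gamma_{m}}_F^{-2}} \le C$, and correctly diagnoses why a naive pathwise integral-remainder bound fails. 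The only thin spot in your write-up is that the two-variable inequality is asserted rather than verified; it is true (with room to spare — using $|p-b| \le \tfrac{1}{2}\nrm{W_1-W_0}_F^2/a$ and $|a-b|\le \nrm{W_1-W_0}_F$ one gets roughly $2.3\,\nrm{W_1-W_0}_F^2/a^2$ in the regime $\nrm{W_1-W_0}_F \le \tfrac{2}{3}a$, and the crude bound $2 + \nrm{W_1-W_0}_F/a$ suffices otherwise), but that elementary verification should be written out, just as the paper leans on \cite{CCG2015} for its corresponding step.
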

\begin{proof}[Proof of Lemma \ref{lemdelta}]
Let $\delta_{V,h}:=\nabla G_{h}(V) - \nabla G_{h}\left(\Gamma_{m}\right) - \nabla_{h}^{2}G\left( \Gamma_{m} \right)\left( V - \Gamma_{m} \right)$, since \\
$ \nabla G_{h}(V)~ - ~\nabla G_{h}\left(\Gamma_{m}\right) =~ \int_{0}^{1}\nabla_{h}^{2}G\left(\Gamma_{m}+t\left( V -\Gamma_{m} \right)\right)\left( V  - \Gamma_{m} \right) dt$, we have 
\begin{align*}
\left\| \delta_{V,h} \right\|_{F} & = \left\| \int_{0}^{1} \nabla_{h}^{2}G\left(\Gamma_{m}+t\left( V -\Gamma_{m} \right)\right)\left( (V  - \Gamma_{m} \right) dt - \nabla_{h}^{2}G\left( \Gamma_{m} \right)\left( V - \Gamma_{m} \right) \right\|_{F} \\
& \leq \int_{0}^{1} \left\| \nabla_{h}^{2}G\left(\Gamma_{m}+t\left( V -\Gamma_{m} \right)\right)\left( (V  - \Gamma_{m} \right)  - \nabla_{h}^{2}G\left( \Gamma_{m} \right)\left( V - \Gamma_{m} \right) \right\|_{F} dt .
\end{align*}
As in the proof of Lemma 5.1 in \cite{CCG2015}, under assumptions 1-3(b), one can check that for all $h \in H$, and $t \in [0,1]$,
\[
\left\| \nabla_{h}^{2}G\left(\Gamma_{m}+t\left( V -\Gamma_{m} \right)\right)\left( (V  - \Gamma_{m} \right)  - \nabla_{h}^{2}G\left( \Gamma_{m} \right)\left( V - \Gamma_{m} \right) \right\|_{F} \leq 6C \left\| V - \Gamma_{m}\right\|_{F}^{2} , 
\]
which concludes the proof. 
\end{proof}
\section{Decompositions of the Robbins-Monro algorithm and proof of Lemma 5.1}

Let us recall that the Robbins-Monro algorithm is defined recursively by
\begin{align*}
V_{n+1} & =  V_{n} + \gamma_{n} \frac{\left( X_{n+1} - \overline{m}_{n} \right)\left( X_{n+1} - \overline{m}_{n} \right)^{T}-V_{n}}{\left\| \left( X_{n+1} - \overline{m}_{n} \right)\left( X_{n+1} - \overline{m}_{n} \right)^{T}-V_{n} \right\|_{F}} \\
& = V_{n} - \gamma_{n} U_{n+1},
\end{align*}
with $U_{n+1}:= - \frac{\left( X_{n+1} - \overline{m}_{n} \right)\left( X_{n+1} - \overline{m}_{n} \right)^{T}-V_{n}}{\left\| \left( X_{n+1} - \overline{m}_{n} \right)\left( X_{n+1} - \overline{m}_{n} \right)^{T}-V_{n} \right\|_{F}}$. 
Let us remark that $\xi_{n+1}:= \nabla_{\overline{m}_{n}}G(V_{n}) - U_{n+1}$, $\left( \xi_{n} \right)$ is a sequence of martingale differences adapted to the filtration $\left( \mathcal{F}_{n} \right)$ and the algorithm can be written as follows
\begin{equation}
\label{decxi} V_{n+1} = V_{n} -  \gamma_{n}\left( \nabla G_{\overline{m}_{n}}(V_{n})- \nabla G_{\overline{m}_{n}} ( \Gamma_{m} )\right) + \gamma_{n}\xi_{n+1} - \gamma_{n}r_{n},
\end{equation}
with $r_{n} := \nabla G_{\overline{m}_{n}} ( \Gamma_{m}) - \nabla G_{m} ( \Gamma_{m})$. Finally, let is  consider the following linearization of  the gradient, 
\begin{equation}
\label{decdelta} V_{n+1} - \Gamma_{m} = \left( I_{\mathcal{S}(H)} - \gamma_{n} \nabla_{m}^{2}G(\Gamma_{m}) \right) \left( V_{n} -  \Gamma_{m}\right) + \gamma_{n}\xi_{n+1} - \gamma_{n}r_{n} - \gamma_{n}r_{n}' - \gamma_{n}\delta_{n},
\end{equation}
with
\begin{align*}
 r_{n}' & := \left( \nabla_{\overline{m}_{n}}^{2}G\left( \Gamma_{m}\right) - \nabla_{m}^{2}G\left( \Gamma_{m}\right)\right)\left( V_{n}-\Gamma_{m}\right) , \\
   \delta_{n} & := \nabla G_{\overline{m}_{n}}\left( V_{n} \right) - \nabla G_{\overline{m}_{n}}\left( \Gamma_{m} \right) - \nabla_{\overline{m}_{n}}^{2}G\left( \Gamma_{m}\right) \left( V_{n} - \Gamma_{m}\right) .  
\end{align*}

\begin{proof}[Proof of Lemma 5.1] The bound of $\left\| \delta_{n} \right\|$ is a corollary of Lemma \ref{lemdelta}.

\noindent \textbf{Bounding $\left\| r_{n} \right\|$} 

Let us recall that for all $h \in H$, $Y(h) := \left( X - h \right) \left( X - h \right)^{T}$. We now define for all $h \in H$ the random function $\varphi_{h}: [0,1] \longrightarrow \mathcal{S}(H)$ defined for all $t \in [0,1]$ by
\[
\varphi_{h}(t) := \frac{Y(m+th) - \Gamma_{m}}{\left\| Y(m+th) - \Gamma_{m} \right\|_{F}}.
\]
Note that $r_{n} = \mathbb{E}\left[ \varphi_{\overline{m}_{n} - m}(0) - \varphi_{\overline{m}_{n} - m }(1)\Big| \mathcal{F}_{n}\right]  $. Thus, by dominated convergence, 
\[
\left\| r_{n} \right\|_{F} \leq \sup_{t \in [0,1]}\mathbb{E}\left[ \left\| \varphi_{\overline{m}_{n}-m}'(t) \right\|_{F} \Big| \mathcal{F}_{n} \right] .
\]
Moreover, one can check that for all $h \in H$, 
\begin{align*}
\varphi_{h}'(t) &  = -\frac{h \left( X -m-th \right)^{T}}{\left\| Y(m+th)  - \Gamma_{m}\right\|_{F}} - \frac{\left( X-m-th \right)h^{T}}{\left\| Y(m+th)  - \Gamma_{m} \right\|_{F}} \\
& +  \left\langle Y(m+th) - \Gamma_{m} , h\left( X - m-th \right)^{T}  \right\rangle_{F} \frac{Y(m+th) - \Gamma_{m}}{\left\| Y(m+th) - \Gamma_{m}\right\|_{F}^{3}}  \\
& + \left\langle Y(m+th) - \Gamma_{m} , \left( X - m-th \right)h^{T}  \right\rangle_{F} \frac{Y(m+th) - \Gamma_{m}}{\left\| Y(m+th) - \Gamma_{m}\right\|_{F}^{3}} .   
\end{align*}
We now bound each term on the right-hand side of previous equality. First, applying Cauchy-Schwarz's inequality and using the fact that for all $h,h' \in H$, $\left\| hh'^{T} \right\|_{F} = \left\| h \right\| \left\| h' \right\|$,
\begin{align*}
\mathbb{E}\left[  \frac{\left\| h \left( X -m-th \right)^{T}\right\|_{F}}{\left\| Y(m+th)  - \Gamma_{m}\right\|_{F}}  \right] & \leq \left\| h \right\| \mathbb{E}\left[ \frac{\left\| X-m-th \right\|}{\left\| Y(m+th) - \Gamma_{m} \right\|_{F}}\right] \\
& \leq \left\| h \right\|  \mathbb{E}\left[ \frac{\sqrt{\left\| Y(m+th) \right\|_{F}}}{\left\| Y(m+th) - \Gamma_{m} \right\|_{F}} \right] \\
& \leq \left\| h \right\| \left( \mathbb{E}\left[ \frac{\sqrt{ \left\| \Gamma_{m} \right\|_{F}}}{\left\| Y(m+th) - \Gamma_{m} \right\|_{F}} \right] + \mathbb{E}\left[ \frac{1}{\sqrt{\left\| Y(m+th) - \Gamma_{m} \right\|_{F}}}  \right] \right) . 
\end{align*}
Thus, since $\mathbb{E}\left[ \frac{1}{\left\| Y(m+th) - \Gamma_{m}\right\|_{F}} \right] \leq C$, 
\begin{equation}\label{inequality1}
\mathbb{E}\left[  \frac{\left\| h \left( X -m-th \right)^{T}\right\|_{F}}{\left\| Y(m+th)  - \Gamma_{m}\right\|_{F}}  \right] \leq \left\| h \right\| \left( C\sqrt{ \left\| \Gamma_{m} \right\|_{F}} + \sqrt{C} \right) .
\end{equation}
In the same way,
\begin{equation}\label{inequality2}
\mathbb{E}\left[  \frac{\left\| \left( X -m-th \right)h^{T}\right\|_{F}}{\left\| Y(m+th)  - \Gamma_{m}\right\|_{F}}  \right] \leq \left\| h \right\| \left( C\sqrt{ \left\| \Gamma_{m} \right\|_{F}} + \sqrt{C} \right) .
\end{equation}
Applying Cauchy-Schwarz's inequality,
\begin{align*}
\mathbb{E}\left[ \left|  \left\langle Y(m+th) - \Gamma_{m} , h\left( X - m-th \right)^{T}  \right\rangle_{F}\right| \frac{\left\| Y(m+th) - \Gamma_{m}\right\|_{F}}{\left\| Y(m+th) - \Gamma_{m}\right\|_{F}^{3}} \right]& \leq \mathbb{E}\left[  \frac{\left\| h\left( X - m-th \right)^{T} \right\|_{F}}{\left\| Y(m+th) - \Gamma_{m} \right\|_{F}} \right] \\
& \leq \left\| h \right\| \mathbb{E}\left[ \frac{\left\| X -m -th \right\|}{\left\| Y(m+th) - \Gamma_{m}\right\|_{F}}\right] \\
& \leq \left\| h \right\| \mathbb{E}\left[ \frac{\sqrt{\left\| Y(m+th) \right\|_{F}} }{\left\| Y(m+th) - \Gamma_{m} \right\|_{F}}\right] .
\end{align*}
Thus, since $\mathbb{E}\left[ \frac{1}{\left\| Y(m+th) - \Gamma_{m} \right\|_{F}}\right] \leq C$, and since for all positive constants $a,b$, $\sqrt{a+b}~\leq~ \sqrt{a}~+ ~\sqrt{b}$,
\begin{align*}
\left\| h \right\| \mathbb{E}\left[ \frac{\sqrt{\left\| Y(m+th) \right\|_{F}} }{\left\| Y(m+th) - \Gamma_{m} \right\|_{F}}\right] & \leq \left\| h \right\| \left( \mathbb{E}\left[ \frac{\sqrt{ \left\| \Gamma_{m} \right\|_{F}}}{\left\| Y(m+th) - \Gamma_{m} \right\|_{F}} \right] + \mathbb{E}\left[ \frac{1}{\sqrt{\left\| Y(m+th) - \Gamma_{m} \right\|_{F}}}  \right] \right) \\
& \leq \left\| h \right\| \left( C\sqrt{ \left\| \Gamma_{m} \right\|_{F}} + \sqrt{C} \right) .
\end{align*}
Finally, 
\begin{align}
\label{inequality3}\mathbb{E}\left[ \left| \left\langle Y(m+th) - \Gamma_{m} , h\left( X - m-th \right)^{T}  \right\rangle_{F}\right| \frac{\left\| Y(m+th) - \Gamma_{m}\right\|_{F}}{\left\| Y(m+th) - \Gamma_{m}\right\|_{F}^{3}} \right] & \leq \left\| h \right\| \left( C\sqrt{ \left\| \Gamma_{m} \right\|_{F}} + \sqrt{C} \right) , \\
\label{inequality4}\mathbb{E}\left[ \left| \left\langle Y(m+th) - \Gamma_{m} , \left( X - m-th \right)h^{T}  \right\rangle_{F}\right| \frac{\left\| Y(m+th) - \Gamma_{m}\right\|_{F}}{\left\| Y(m+th) - \Gamma_{m}\right\|_{F}^{3}} \right] & \leq \left\| h \right\| \left( C\sqrt{ \left\| \Gamma_{m} \right\|_{F}} + \sqrt{C} \right) .
\end{align}
Applying inequalities (\ref{inequality1}) to (\ref{inequality4}) with $h =  \overline{m}_{n} - m $,  the announced result is proven,
\[
\left\| r_{n} \right\|_{F} \leq 4 \left( \sqrt{C} + C \sqrt{\left\| \Gamma_{m}\right\|_{F}}\right) \left\| \overline{m}_{n} - m \right\| .
\]

\noindent \textbf{Bounding $\left\| r_{n}' \right\|$}

For all $h \in H$ and $V \in \mathcal{S}(H)$, we define the random function $\varphi_{h,V}:~\left[ 0 , 1 \right] ~\longrightarrow ~ \mathcal{S}(H)$ such that for all $t \in [0,1]$,
\[
\varphi_{h,V}(t):= \frac{1}{\left\| Y(m+th) - \Gamma_{m} \right\|_{F}}\left( I_{\mathcal{S}(H)} - \frac{\left( Y(m+th) - \Gamma_{m} \right) \otimes_{F} \left( Y(m+th) - \Gamma_{m} \right)}{\left\| Y(m+th) - \Gamma_{m} \right\|_{F}^{2}}\right)\left( V \right) .
\]
Note that $r_{n}' = \mathbb{E}\left[ \varphi_{\overline{m}_{n} - m , V_{n} - \Gamma_{m}}(1) - \varphi_{\overline{m}_{n}-m , V_{n} - \Gamma_{m}}(0) \Big| \mathcal{F}_{n} \right]$. By dominated convergence,
\[
\left\| r_{n}' \right\|_{F} \leq \sup_{t\in [0,1]}\mathbb{E}\left[ \left\| \varphi_{\overline{m}_{n} -m , V_{n} - \Gamma_{m}}'(t)\right\|_{F} \Big| \mathcal{F}_{n} \right] .
\]
Moreover, as for the bound of $\left\| r_{n} \right\|$,  one can check, with an application of Cauchy-Schwarz's inequality, that for all $h \in H$, $V \in \mathcal{S}(H)$, and $t \in [0,1]$,
\begin{align*}
\varphi_{h,V}'(t)&  \leq 6 \frac{\left\| Y(m+th) - \Gamma_{m} \right\|_{F}\left\| h^{T}(X-m-th) \right\|_{F}}{\left\| Y(m+th) - \Gamma_{m} \right\|_{F}^{3}}\left\| V \right\|_{F} \\
& + 6 \frac{\left\| Y(m+th) - \Gamma_{m} \right\|_{F}\left\| h (X-m-th)^{T}\right\|_{F}}{\left\| Y(m+th) - \Gamma_{m} \right\|_{F}^{5}}\left\| \left( Y(m+th) - \Gamma_{m} \right) \otimes_{F} \left( Y (m+th) - \Gamma_{m} \right) (V) \right\|_{F} \\
& \leq 12 \frac{\left\| h (X-m-th)^{T}\right\|_{F}}{\left\| Y(m+th) - \Gamma_{m} \right\|_{F}^{2}}\left\| V \right\|_{F}.
\end{align*}
Finally, 
\begin{align}
\notag \mathbb{E}\left[  \frac{\left\| h (X-m-th)^{T}\right\|_{F}}{\left\| Y(m+th) - \Gamma_{m} \right\|_{F}^{2}}\left\| V \right\|_{F} \right] & \leq \mathbb{E}\left[ \frac{\left\| h \right\| \left\| X-m-th \right\|}{\left\| Y(m+th) - \Gamma_{m} \right\|_{F}^{2}}\left\| V \right\|_{F} \right] \\
\notag & \leq \left\| h \right\| \left\| V \right\|_{F} \mathbb{E}\left[ \frac{\sqrt{\left\| \Gamma_{m} \right\|_{F}}}{\left\| Y(m+th) - \Gamma_{m} \right\|_{F}^{2}} \right] \\
\notag & + \left\| h \right\| \left\| V \right\|_{F}\mathbb{E}\left[ \frac{1}{\left\| Y(m+th) - \Gamma_{m} \right\|_{F}^{3/2}}\right] \\
\label{inequality1'} & \leq \left( C\sqrt{\left\| \Gamma_{m}\right\|_{F}} + C^{3/4}\right)\left\| h \right\| \left\| V \right\|_{F}. 
\end{align}
Then the announced result follows from an application of inequality (\ref{inequality1'}) with $h = \overline{m}_{n} - m$ and $V=V_{n} - \Gamma_{m}$,
\[
\left\| r_{n}' \right\| \leq 12\left( C\sqrt{\left\| \Gamma_{m}\right\|_{F}} + C^{3/4}\right)\left\| \overline{m}_{n} - m \right\| \left\| V_{n} - \Gamma_{m} \right\|_{F}.
\]
\end{proof}

\section{Proofs of Lemma 5.2, 5.3 and 5.4}

\begin{proof}[Proof of Lemma 5.2]
Using decomposition (\ref{decxi}),
\begin{align*}
\left\| V_{n+1} - \Gamma_{m}\right\|_{F}^{2} & = \left\| V_{n} - \Gamma_{m} \right\|_{F}^{2} -2 \gamma_{n} \left\langle V_{n} - \Gamma_{m} , \nabla G_{\overline{m}_{n}}(V_{n}) - \nabla G_{\overline{m}_{n}}(\Gamma_{m}) \right\rangle_{F} \\ & +\gamma_{n}^{2}\left\| \nabla G_{\overline{m}_{n}}(V_{n}) - \nabla G_{\overline{m}_{n}}(\Gamma_{m}) \right\|_{F}^{2} \\
& + \gamma_{n}^{2}\left\| \xi_{n+1}\right\|_{F}^{2} + 2 \gamma_{n} \left\langle V_{n} - \Gamma_{m} - \gamma_{n} \left( \nabla G_{\overline{m}_{n}}(V_{n}) - \nabla G_{\overline{m}_{n}}(\Gamma_{m}) \right) , \xi_{n+1} \right\rangle_{F} \\
& + \gamma_{n}^{2}\left\| r_{n} \right\|_{F}^{2} -2 \gamma_{n}\left\langle r_{n} , V_{n} - \Gamma_{m} \right\rangle_{F} -2\gamma_{n}^{2} \left\langle r_{n} , \xi_{n+1} - \nabla G_{\overline{m}_{n}}(V_{n}) + \nabla G_{\overline{m}_{n}}(\Gamma_{m}) \right\rangle_{F} . 
\end{align*}
Note that for all $h \in H$ and $V \in \mathcal{S}(H)$ we have $\left\| \nabla G_{h}(V) \right\|_{F} \leq 1$. Moreover, $\left\| r_{n} \right\|_{F} \leq 2$ and $\left\| \xi_{n+1} \right\|_{F} \leq 2$. Since for all $h \in H$, $G_{h}$ is a convex function, we get with  Cauchy-Schwarz's inequality,
\begin{align}\label{majord2}
\left\| V_{n+1} - \Gamma_{m} \right\|_{F}^{2} a\leq \left\| V_{n} - \Gamma_{m}\right\|_{F}^{2} + 36\gamma_{n}^{2} +2 \gamma_{n} \left\langle \xi_{n+1} , V_{n} - \Gamma_{m}\right\rangle_{F} -2\gamma_{n} \left\langle r_{n} , V_{n} - \Gamma_{m} \right\rangle_{F} . 
\end{align}
 Let $C' := 4\left( \sqrt{C} + C \sqrt{\left\|\Gamma_{m}\right\|_{F}}\right)$, let us recall that $\left\| r_{n} \right\|_{F} \leq C' \left\| \overline{m}_{n} - m \right\|$. We now prove by induction that for all integer $p \geq 1$, there is a positive constant $M_{p}$ such that for all $n \geq 1$, $\mathbb{E}\left[ \left\| V_{n} - \Gamma_{m} \right\|_{F}^{2p}\right] \leq M_{p}$.

\medskip

The case $p=1$ has been studied  in the proof of Theorem 3.2. Let $p \geq 2$ and suppose from now that for all $k \leq p-1$, there is a positive constant $M_{k}$ such that for all $n \geq 1$,
\[
\mathbb{E}\left[ \left\| V_{n} - \Gamma_{m}\right\|_{F}^{2k}\right] \leq M_{k}.
\]

\medskip

\noindent \textbf{Bounding $\mathbb{E}\left[ \left\| V_{n} - \Gamma_{m} \right\|_{F}^{2p}\right]$.}
\newline
Let us apply inequality (\ref{majord2}), for all $p \geq 2$ and use the fact that  $\left( \xi_{n} \right) $ is a sequence of martingales differences adapted to the filtration $\left( \mathcal{F}_{n} \right)$,
\begin{multline}
\mathbb{E}\left[ \left\| V_{n+1} - \Gamma_{m} \right\|_{F}^{2p} \right]  \leq \mathbb{E}\left[ \left( \left\| V_{n} - \Gamma_{m}\right\|_{F}^{2} + 36\gamma_{n}^{2} +2\gamma_{n}\left\| r_{n} \right\|_{F} \left\| V_{n} - \Gamma_{m}\right\|_{F} \right)^{p} \right] \\
 + \sum_{k=2}^{p} \binom{p}{k} \mathbb{E}\left[\left( 2\gamma_{n} \left\langle V_{n} - \Gamma_{m} , \xi_{n+1} \right\rangle_{F} \right)^{k} \left( \left\| V_{n} - \Gamma_{m} \right\|_{F}^{2} + 36 \gamma_{n}^{2} + 2\gamma_{n} \left\| r_{n} \right\|_{F}\left\| V_{n} - \Gamma_{m} \right\|_{F} \right)^{p-k} \right] .
\label{decordp} 
\end{multline}
Let us denote by $(*)$ the second term on the right-hand side of inequality (\ref{decordp}). Applying Cauchy-Schwarz's inequality and since $\left\| \xi_{n+1} \right\|_{F} \leq 2$,
\begin{align*}
(*)&  =   \sum_{k=2}^{n} \binom{p}{k}\mathbb{E}\left[ \left( 2\gamma_{n} \left\langle V_{n} - \Gamma_{m} , \xi_{n+1} \right\rangle \right)^{k} \left( \left\| V_{n} - \Gamma_{m} \right\|_{F}^{2} + 36 \gamma_{n}^{2} + 2\gamma_{n} \left\| r_{n} \right\|_{F}\left\| V_{n} - \Gamma_{m} \right\|_{F} \right)^{p-k}\right] \\
& \leq \sum_{k=2}^{p}\binom{p}{k}2^{2k}\gamma_{n}^{k}\mathbb{E}\left[\left\| V_{n} - \Gamma_{m} \right\|_{F}^{k}\left( \left\| V_{n} - \Gamma_{m} \right\|_{F}^{2} + 36 \gamma_{n}^{2} + 2\gamma_{n} \left\| r_{n} \right\|_{F}\left\| V_{n} - \Gamma_{m} \right\|_{F} \right)^{p-k}\right] .
\end{align*}
With the help of Lemma \ref{lemtechnique},
\begin{align*}
(*) & \leq \sum_{k=2}^{p}2^{2k}3^{p-k-1}\gamma_{n}^{k} \mathbb{E}\left[ \left\| V_{n} - \Gamma_{m} \right\|_{F}^{2p-k}\right] + \sum_{k=2}^{p}2^{2k}3^{p-k-1}36^{p-k}\gamma_{n}^{2p-k}\mathbb{E}\left[ \left\| V_{n} - \Gamma_{m} \right\|_{F}^{k}\right] \\
& + \sum_{k=2}^{p}2^{p+k}3^{p-k-1}\gamma_{n}^{p}\mathbb{E}\left[ \left\| r_{n} \right\|_{F}^{p-k}\left\| V_{n} - \Gamma_{m}\right\|_{F}^{p} \right] .
\end{align*}
Applying Cauchy-Schwarz's inequality,
\begin{align*}
\sum_{k=2}^{p}2^{2k}3^{p-k-1}\gamma_{n}^{k}\mathbb{E}\left[ \left\| V_{n} - \Gamma_{m} \right\|_{F}^{2p-k} \right] & = \sum_{k=2}^{p}2^{2k}3^{p-k-1}\gamma_{n}^{k} \mathbb{E}\left[ \left\| V_{n} - \Gamma_{m} \right\|_{F}^{p-1}\left\| V_{n} - \Gamma_{m} \right\|_{F}^{p+1-k}\right] \\
& \leq \sum_{k=2}^{p}2^{2k}3^{p-k-1}\gamma_{n}^{k}\sqrt{\mathbb{E}\left[ \left\| V_{n} - \Gamma_{m} \right\|_{F}^{2(p-1)}\right]}\sqrt{\mathbb{E}\left[ \left\| V_{n} - \Gamma_{m} \right\|_{F}^{2(p+1-k)}\right]}. \\
\end{align*}
By induction,
\begin{align}
\notag \sum_{k=2}^{p}2^{2k}3^{p-k-1}\gamma_{n}^{k}\mathbb{E}\left[ \left\| V_{n} - \Gamma_{m} \right\|_{F}^{2p-k} \right] & \leq \sum_{k=2}^{p}2^{2k}3^{p-k-1}\gamma_{n}^{k}\sqrt{M_{p-1}}\sqrt{M_{p+1-k}} \\
& = O \left( \gamma_{n}^{2} \right). \label{eq1}
\end{align}
In the same way, applying Cauchy-Schwarz's inequality and by induction,
\begin{align}
\notag \sum_{k=2}^{p}2^{2k}3^{p-k-1}36^{p-k}\gamma_{n}^{2p-k}\mathbb{E}\left[ \left\| V_{n} - \Gamma_{m} \right\|_{F}^{k} \right] & = \sum_{k=2}^{p}2^{2k}3^{p-k-1}36^{p-k}\gamma_{n}^{2p-k}\mathbb{E}\left[ \left\| V_{n} - \Gamma_{m} \right\|_{F}\left\| V_{n} - \Gamma_{m} \right\|_{F}^{k-1} \right] \\
\notag & \leq \sum_{k=2}^{p}2^{2k}3^{p-k-1}36^{p-k}\gamma_{n}^{2p-k}\sqrt{M_{1}}\sqrt{M_{k-1}} \\
\label{eq2} & = O \left( \gamma_{n}^{2} \right) ,
\end{align}
since $p \geq 2$. Similarly, since $\left\| r_{n} \right\|_{F} \leq 2$ and since $p \geq 2$, applying Cauchy-Schwarz's inequality and by induction,
\begin{align}
\notag \sum_{k=2}^{p}2^{p+k}3^{p-k-1}\gamma_{n}^{p}\mathbb{E}\left[ \left\| r_{n} \right\|_{F}^{p-k} \left\| V_{n} - \Gamma_{m}\right\|_{F}^{p}\right] & \leq \sum_{k=2}^{p}2^{2p}3^{p-k-1}\gamma_{n}^{p}\mathbb{E}\left[ \left\| V_{n} - \Gamma_{m}\right\|_{F}^{p}\right] \\
\notag & \leq \sum_{k=2}^{p}2^{2p}3^{p-k-1}\gamma_{n}^{p}\sqrt{M_{1}}\sqrt{M_{p-1}} \\
\label{eq3} & = O \left( \gamma_{n}^{2}\right) .
\end{align}
Finally, applying inequalities (\ref{eq1}) to (\ref{eq3}), there is a positive constant $A_{1}'$ such that for all $n \geq 1$,
\begin{equation}\label{maj1}
\mathbb{E}\left[ \sum_{k=2}^{p} \binom{p}{k}\left( 2\gamma_{n} \left\langle V_{n} - \Gamma_{m} , \xi_{n+1} \right\rangle_{F} \right)^{k} \left( \left\| V_{n} - \Gamma_{m} \right\|_{F}^{2} + 36 \gamma_{n}^{2} + 2\gamma_{n} \left\| r_{n} \right\|_{F}\left\| V_{n} - \Gamma_{m} \right\|_{F} \right)^{p-k} \right] \leq A_{1}'\gamma_{n}^{2}.
\end{equation}
We now denote by $(**)$ the first term at the right-hand side of inequality (\ref{decordp}). With the help of Lemma \ref{lemtechnique} and applying Cauchy-Schwarz's inequality,
\begin{align*}
(**) & \leq \mathbb{E}\left[ \left\| V_{n} - \Gamma_{m}\right\|_{F}^{2p}\right] + \sum_{k=1}^{p}\binom{p}{k}\mathbb{E}\left[ \left( 36\gamma_{n}^{2} +2\gamma_{n}\left\langle r_{n} , V_{n} - \Gamma_{m} \right\rangle_{F}\right)^{k}\left\| V_{n} - \Gamma_{m}\right\|_{F}^{2p-2k}\right] \\
& \leq \mathbb{E}\left[ \left\| V_{n} - \Gamma_{m}\right\|_{F}^{2p}\right] + \sum_{k=1}^{p}\binom{p}{k}2^{k-1}\mathbb{E}\left[ \left( 36^{k}\gamma_{n}^{2k} + 2^{k}\gamma_{n}^{k}\left\| r_{n} \right\|_{F}^{k} \left\| V_{n} - \Gamma_{m}\right\|_{F}^{k} \right) \left\| V_{n}-\Gamma_{m}\right\|_{F}^{2p-2k} \right] .
\end{align*}
Moreover, let
\begin{align*}
(***) & := \sum_{k=1}^{p}\binom{p}{k}2^{k-1}\mathbb{E}\left[ \left( 36^{k}\gamma_{n}^{2k} + 2^{k}\gamma_{n}^{k}\left\| r_{n} \right\|_{F}^{k} \left\| V_{n} - \Gamma_{m}\right\|_{F}^{k} \right) \left\| V_{n}-\Gamma_{m}\right\|_{F}^{2p-2k} \right] \\
& = \sum_{k=1}^{p}\binom{p}{k}2^{k-1}36^{k}\gamma_{n}^{2k}\mathbb{E}\left[ \left\| V_{n} - \Gamma_{m} \right\|_{F}^{2p-2k} \right] + \sum_{k=1}^{p}\binom{p}{k}2^{2k-1}\gamma_{n}^{k}\mathbb{E}\left[ \left\| r_{n} \right\|_{F}^{k}\left\| V_{n} - \Gamma_{m} \right\|_{F}^{2p-k}\right] .
\end{align*}
By induction,
\begin{align*}
 \sum_{k=1}^{p}\binom{p}{k}2^{k-1}36^{k}\gamma_{n}^{2k}\mathbb{E}\left[ \left\| V_{n} - \Gamma_{m} \right\|_{F}^{2p-2k} \right] & =  \sum_{k=1}^{p}\binom{p}{k}2^{k-1}36^{k}\gamma_{n}^{2k}M_{p-k} \\
 & = O \left( \gamma_{n}^{2} \right) .
\end{align*}
Moreover,
\begin{align*}
\sum_{k=1}^{p}\binom{p}{k}2^{2k-1}\gamma_{n}^{k}\mathbb{E}\left[ \left\| r_{n} \right\|_{F}^{k}\left\| V_{n} - \Gamma_{m} \right\|_{F}^{2p-k}\right]  & = \sum_{k=2}^{p}\binom{p}{k}2^{2k-1}\gamma_{n}^{k}\mathbb{E}\left[ \left\| r_{n} \right\|_{F}^{k}\left\| V_{n} - \Gamma_{m} \right\|_{F}^{2p-k}\right] \\
& + 2p\gamma_{n}\mathbb{E}\left[ \left\| r_{n} \right\|_{F}\left\| V_{n} - \Gamma_{m}\right\|_{F}^{2p-1}\right] .
\end{align*}
Applying Cauchy-Schwarz's inequality and by induction, since $ \left\| r_{n} \right\|_{F} \leq 2$,
\begin{align*}
\sum_{k=2}^{p}\binom{p}{k}2^{2k-1}\gamma_{n}^{k}\mathbb{E}\left[ \left\| r_{n} \right\|_{F}^{k}\left\| V_{n} - \Gamma_{m} \right\|_{F}^{2p-k}\right] & \leq \sum_{k=2}^{p}\binom{p}{k}2^{3k-1}\gamma_{n}^{k}\mathbb{E}\left[ \left\| V_{n} - \Gamma_{m} \right\|_{F}^{2p-k}\right] \\
& \leq \sum_{k=2}^{p}\binom{p}{k}2^{3k-1}\gamma_{n}^{k}\sqrt{M_{p+1-k}}\sqrt{M_{p-1}} \\
& = O \left( \gamma_{n}^{2} \right). 
\end{align*}
Moreover, applying Theorem 4.2 in \cite{godichon2015} and Hölder's inequality, since $\left\| r_{n} \right\|_{F}~ \leq ~ C'~ \left\| \overline{m}_{n} - m \right\|$,
\begin{align*}
2p\gamma_{n} \mathbb{E}\left[ \left\| r_{n} \right\|_{F}\left\| V_{n} - \Gamma_{m} \right\|_{F} ^{2p-1}\right] & \leq 2C' p \gamma_{n} \mathbb{E}\left[ \left\| \overline{m}_{n} - m \right\| \left\| V_{n} - \Gamma_{m}\right\|_{F}^{2p-1}\right] \\
& \leq 2C' p \gamma_{n} \left( \mathbb{E}\left[ \left\| \overline{m}_{n} - m \right\|^{2p}\right]\right)^{\frac{1}{2p}}\left( \mathbb{E}\left[ \left\| V_{n} - \Gamma_{m}\right\|_{F}^{2p}\right]\right)^{\frac{2p-1}{2p}} \\
& \leq 2C'p\gamma_{n} \frac{K_{p}^{\frac{1}{2p}}}{n^{1/2}}\left( \mathbb{E}\left[ \left\| V_{n} - \Gamma_{m}\right\|_{F}^{2p}\right]\right)^{\frac{2p-1}{2p}}.
\end{align*}
Finally,
\begin{align*}
2C'p\gamma_{n} \frac{K_{p}^{\frac{1}{2p}}}{n^{1/2}}\left( \mathbb{E}\left[ \left\| V_{n} - \Gamma_{m}\right\|_{F}^{2p}\right]\right)^{\frac{2p-1}{2p}} & \leq 2C'p\gamma_{n} \frac{K_{p}^{\frac{1}{2p}}}{n^{1/2}} \max\left\lbrace 1, \mathbb{E}\left[ \left\| V_{n} - \Gamma_{m}\right\|_{F}^{2p}\right]\right\rbrace \\
& \leq 2C'p\gamma_{n} \frac{K_{p}^{\frac{1}{2p}}}{n^{1/2}} \left( 1 + \mathbb{E}\left[ \left\| V_{n} - \Gamma_{m}\right\|_{F}^{2p}\right]\right) .
\end{align*}
Thus, there are positive constants $A_{0}'',A_{1}''$ such that
\begin{equation}
\label{maj2} (**)  \leq \left( 1+ A_{0}''\frac{1}{n^{\alpha + 1/2}} \right) \mathbb{E}\left[ \left\| V_{n} - \Gamma_{m} \right\|_{F}^{2p}\right]  + A_{1}''\frac{1}{n^{\alpha + 1/2}} .
\end{equation}

Finally, thanks to inequalities (\ref{maj1}) and (\ref{maj2}), there are positive constants $A_{0}',A_{1}'$ such that
\begin{align*}
\mathbb{E}\left[ \left\| V_{n+1} - \Gamma_{m} \right\|_{F}^{2p}\right] & \leq \left( 1+A_{0}'\frac{1}{n^{\alpha + 1/2}}\right)\mathbb{E}\left[ \left\| V_{n} - \Gamma_{m}\right\|_{F}^{2p}\right] + A_{1}'\frac{1}{n^{\alpha + 1/2}} \\
& \leq \prod_{k=1}^{n}\left( 1+A_{0}'\frac{1}{k^{\alpha + 1/2}}\right)\mathbb{E}\left[ \left\| V_{1} - \Gamma_{m}\right\|_{F}^{2p}\right]+ \sum_{k=1}^{n}\prod_{j=k+1}^{n}\left( 1+A_{0}' \frac{1}{j^{\alpha + 1/2}}\right) A_{1}'\frac{1}{k^{\alpha + 1/2}} \\
& \leq \prod_{k=1}^{\infty}\left( 1+A_{0}'\frac{1}{k^{\alpha + 1/2}}\right)\mathbb{E}\left[ \left\| V_{1} - \Gamma_{m}\right\|_{F}^{2p}\right]+ \prod_{j=1}^{\infty}\left( 1+A_{0}' \frac{1}{j^{\alpha + 1/2}}\right)\sum_{k=1}^{\infty} A_{1}'\frac{1}{k^{\alpha + 1/2}} \\
& \leq M_{p},
\end{align*}  
which concludes the induction and the proof.

\end{proof}

\begin{proof}[ Proof of Lemma 5.3]
Let us define the following linear operators:
\begin{align*}
\alpha_{n} & := I_{\mathcal{S}(H)} - \gamma_{n}\nabla_{m}^{2}G(\Gamma_{m}) , \\
\beta_{n} & := \prod_{k=1}^{n} \alpha_{k} = \prod_{k=1}^{n} \left( I_{\mathcal{S}(H)} - \gamma_{k} \nabla_{m}^{2}G(\Gamma_{m}) \right) , \\
\beta_{0} & := I_{\mathcal{S}(H)} .
\end{align*}
Using decomposition (\ref{decdelta}) and by induction, for all $n \geq 1$,
\begin{equation}
\label{decbeta} V_{n} - \Gamma_{m} = \beta_{n-1} \left( V_{1} - \Gamma_{m} \right) + \beta_{n-1}M_{n} - \beta_{n-1}R_{n} - \beta_{n-1} R_{n}' - \beta_{n-1}\Delta_{n} ,
\end{equation}
with
\begin{align*}
& M_{n} := \sum_{k=1}^{n-1} \gamma_{k}\beta_{k}^{-1}\xi_{k+1} , & R_{n} := \sum_{k=1}^{n-1} \gamma_{k}\beta_{k}^{-1}r_{k} , \\
& R_{n}' := \sum_{k=1}^{n-1} \gamma_{k}\beta_{k}^{-1}r_{k}' , & \Delta_{n} := \sum_{k=1}^{n-1}\gamma_{k}\beta_{k}^{-1}\delta_{k} .
\end{align*}
We now study the asymptotic behavior of the linear operators $\beta_{n}$ and $\beta_{n-1}\beta_{k}^{-1}$. As in \cite{CCZ11}, one can check that there are positive constants $c_{0},c_{1}$ such that for all integers $k,n \geq 1$ with $k \leq n-1$,
\begin{align}\label{majbeta}
& \left\| \beta_{n-1} \right\|_{op} \leq c_{0}e^{-\lambda_{\min}\sum_{k=1}^{n}\gamma_{n}}, & \left\| \beta_{n-1}\beta_{k}^{-1}\right\|_{op} & \leq c_{1}e^{-\lambda_{\min}\sum_{j=k}^{n}\gamma_{j}} ,
\end{align}
where $\| . \|_{op}$ is the usual spectral norm for linear operators. We now bound the quadratic mean of each term in decomposition (\ref{decbeta}). 
\medskip

\noindent \textbf{Step 1: the quasi deterministic term $\beta_{n-1}(V_{1} - \Gamma_{m})$.} 
\newline
Applying inequality (\ref{majbeta}), there is a positive constant $c_{0}'$ such that
\begin{align}\label{eqdec1}
\notag \mathbb{E}\left[ \left\| \beta_{n-1}\left(V_{1} - \Gamma_{m}\right) \right\|_{F}^{2}\right] &  \leq \left\| \beta_{n-1}\right\|_{op}^{2}\mathbb{E}\left[ \left\| V_{1}-\Gamma_{m}\right\|_{F}^{2}\right] \\
\notag & \leq c_{0}e^{-2\lambda_{\min}\sum_{k=1}^{n}\gamma_{n}}\mathbb{E}\left[ \left\| V_{1} - \Gamma_{m} \right\|_{F}^{2}\right] \\
& \leq c_{0}e^{-c_{0}'n^{1-\alpha}}\mathbb{E}\left[ \left\| V_{1}- \Gamma_{m} \right\|_{F}^{2} \right] .
\end{align}
This term converges exponentially fast to $0$. 

\medskip

\noindent \textbf{Step 2: the martingale term $\beta_{n-1}M_{n}$.}
\newline
Since $\left( \xi_{n} \right)$ is a sequence of martingale differences adapted to the filtration $\left( \mathcal{F}_{n} \right)$,
\begin{align*}
\mathbb{E}\left[ \left\| \beta_{n-1}M_{n} \right\|_{F}^{2}\right] & = \sum_{k=1}^{n-1}\mathbb{E}\left[ \left\| \beta_{n-1}\beta_{k}^{-1}\gamma_{k}\xi_{k+1} \right\|_{F}^{2}\right] + 2 \sum_{k=1}^{n-1}\sum_{k'=k+1}^{n-1}\gamma_{k}\gamma_{k'}\mathbb{E}\left[ \left\langle \beta_{n-1}\beta_{k}^{-1}\xi_{k+1} , \beta_{n-1}\beta_{k'}^{-1}\xi_{k'+1} \right\rangle_{F} \right] \\
& = \sum_{k=1}^{n-1}\mathbb{E}\left[ \left\| \beta_{n-1}\beta_{k}^{-1}\gamma_{k}\xi_{k+1} \right\|_{F}^{2}\right] + 2 \sum_{k=1}^{n-1}\sum_{k'=k+1}^{n-1}\gamma_{k}\gamma_{k'}\mathbb{E}\left[ \left\langle \beta_{n-1}\beta_{k}^{-1}\xi_{k+1} , \beta_{n-1}\beta_{k'}^{-1}\mathbb{E}\left[\xi_{k'+1}|\mathcal{F}_{k'}\right] \right\rangle_{F} \right] \\
& = \sum_{k=1}^{n-1}\mathbb{E}\left[ \left\| \beta_{n-1}\beta_{k}^{-1}\gamma_{k}\xi_{k+1} \right\|_{F}^{2}\right] .
\end{align*}
Moreover, as in \cite{CCG2015}, Lemma \ref{sumexp} ensures that there is a positive constant $C_{1}'$ such that for all $n \geq 1$,
\begin{equation}
\label{eqdec2} \mathbb{E}\left[ \left\| \beta_{n-1}M_{n} \right\|_{F}^{2}\right] \leq \frac{C_{1}'}{n^{\alpha}}.
\end{equation}

\medskip

\noindent \textbf{Step 3: the first remainder term $\beta_{n-1}R_{n}$.}
\newline
Remarking that $\left\| r_{n} \right\|_{F} \leq 4 \left( \sqrt{C} + C\sqrt{\left\|\Gamma_{m}\right\|_{F}} \right) \left\| \overline{m}_{n} - m \right\|$, 
\begin{align*}
\mathbb{E}\left[ \left\| \beta_{n-1}R_{n} \right\|_{F}^{2} \right] & \leq \mathbb{E}\left[ \left( \sum_{k=1}^{n-1} \gamma_{k} \left\| \beta_{n-1}\beta_{k}^{-1} \right\|_{op}\left\| r_{k} \right\|_{F} \right)^{2}\right] \\
& \leq 16 \left( \sqrt{C} + \sqrt{\left\| \Gamma_{m}\right\|_{F}}\right)^{2} \mathbb{E}\left[ \left( \sum_{k=1}^{n-1} \gamma_{k}\left\| \beta_{n-1}\beta_{k}^{-1}\right\|_{op} \left\| \overline{m}_{k}-m \right\| \right)^{2}\right] .
\end{align*}
Applying Lemma 4.3 and Theorem 4.2 in \cite{godichon2015},
\begin{align*}
\mathbb{E}\left[ \left\| \beta_{n-1}R_{n} \right\|_{F}^{2} \right] & \leq 16 \left( \sqrt{C} + C\sqrt{\left\| \Gamma_{m}\right\|_{F}} \right)^{2}\left( \sum_{k=1}^{n-1}\gamma_{k}\left\| \beta_{n-1}\beta_{k}^{-1}\right\|_{op}\sqrt{\mathbb{E}\left[ \left\| \overline{m}_{k} - m \right\|^{2}\right]}\right)^{2} \\
& \leq  16 \left( \sqrt{C} + C\sqrt{\left\| \Gamma_{m}\right\|_{F}} \right)^{2}K_{1}\left( \sum_{k=1}^{n-1}\gamma_{k}\left\| \beta_{n-1}\beta_{k}^{-1}\right\|_{op}\frac{1}{k^{1/2}}\right)^{2}.
\end{align*}
Applying inequality (\ref{majbeta}),
\begin{align*}
\mathbb{E}\left[ \left\| \beta_{n-1}R_{n} \right\|_{F}^{2}\right] & \leq  16 \left( \sqrt{C} + C\sqrt{\Gamma_{m}} \right)^{2}K_{1}\left( \sum_{k=1}^{n-1}\gamma_{k}e^{-\sum_{j=k}^{n}\gamma_{j}}\frac{1}{k^{1/2}}\right)^{2} \\
& \leq   16 \left( \sqrt{C} + C\sqrt{\Gamma_{m}} \right)^{2}K_{1}\left( \sum_{k=1}^{n}\gamma_{k}e^{-\sum_{j=k}^{n}\gamma_{j}}\frac{1}{k^{1/2}}\right)^{2}.
\end{align*}
Splitting the sum into two parts and applying Lemma \ref{sumexp}, we have
\begin{align*}
\mathbb{E}\left[ \left\| \beta_{n-1}R_{n} \right\|_{F}^{2}\right] & \leq 32 \left( \sqrt{C} + C\sqrt{\left\| \Gamma_{m}\right\|_{F}} \right)^{2}K_{1}\left( \sum_{k=1}^{E (n/2)}\gamma_{k}e^{-\sum_{j=k}^{n}\gamma_{j}}\frac{1}{k^{1/2}}\right)^{2} \\
& + 32 \left( \sqrt{C} + C\sqrt{\left\| \Gamma_{m}\right\|_{F}} \right)^{2}K_{1}\left( \sum_{k=E(n/2) +1}^{n}\gamma_{k}e^{-\sum_{j=k}^{n}\gamma_{j}}\frac{1}{k^{1/2}}\right)^{2} \\
& = O \left( \frac{1}{n}\right) .
\end{align*}
Thus, there is a positive constant $C_{2}'$ such that for all $n \geq 1$,
\begin{equation}\label{eqdec3}
\mathbb{E}\left[ \left\| \beta_{n-1}R_{n} \right\|_{F}^{2}\right] \leq  \frac{C_{2}'}{n}.
\end{equation}

\medskip

\noindent \textbf{Step 4: the second remainder term $\beta_{n-1}R_{n}'$.}
\newline
Let us recall that for all $n \geq 1$, $\left\| r_{n}' \right\|_{F}~\leq~12D\left\|  \overline{m}_{n}-m  \right\|\left\| V_{n} - \Gamma_{m} \right\|_{F}$ with $D:= C \sqrt{ \left\| \Gamma_{m} \right\|_{F}} ~+ ~C ^{3/4}$. Thus,
\begin{align*}
\mathbb{E}\left[ \left\| \beta_{n-1}R_{n}' \right\|_{F}^{2}\right] & \leq \mathbb{E}\left[ \left( \sum_{k=1}^{n-1}\gamma_{k}\left\| \beta_{n-1}\beta_{k}^{-1}\right\|_{op}\left\| r_{k}' \right\|_{F} \right)^{2}\right] \\
& \leq 144D^{2}\mathbb{E}\left[ \left( \sum_{k=1}^{n-1}\gamma_{k}\left\| \beta_{n-1}\beta_{k}^{-1}\right\|_{op}\left\| \overline{m}_{k} - m \right\| \left\| V_{k} - \Gamma_{m}\right\|_{F} \right)^{2}\right] .
\end{align*}
Applying Lemma 4.3 in \cite{godichon2015},
\[
\mathbb{E}\left[ \left\| \beta_{n-1}R_{n}' \right\|_{F}^{2}\right]  \leq 144D^{2}\left( \sum_{k=1}^{n-1}\gamma_{k}\left\| \beta_{n-1}\beta_{k}^{-1}\right\|_{op}\sqrt{\mathbb{E}\left[ \left\| \overline{m}_{k} - m \right\|^{2} \left\| V_{k} - \Gamma_{m} \right\|_{F}^{2} \right]}\right)^{2}.
\]
Thanks to Lemma 5.2, there is a positive constant $M_{2}$ such that for all $n \geq 1$, $\mathbb{E}\left[ \left\| V_{n} - \Gamma_{m} \right\|_{F}^{4}\right]~\leq~M_{2}$. Thus, applying Cauchy-Schwarz's inequality and Theorem 4.2 in \cite{godichon2015},
 \begin{align*}
\mathbb{E}\left[ \left\| \beta_{n-1}R_{n}' \right\|_{F}^{2}\right] & \leq 144D^{2}\left( \sum_{k=1}^{n-1}\gamma_{k}\left\| \beta_{n-1}\beta_{k}^{-1}\right\|_{op}\left(\mathbb{E}\left[ \left\| \overline{m}_{k} - m \right\|^{4}\right]\right)^{\frac{1}{4}} \left( \mathbb{E}\left[ \left\| V_{k} - \Gamma_{m} \right\|_{F}^{4} \right]\right)^{\frac{1}{4}}\right)^{2} \\
& \leq 144D^{2}\sqrt{M_{2}K_{2}}\left( \sum_{k=1}^{n-1}\gamma_{k}\left\| \beta_{n-1}\beta_{k}^{-1}\right\|_{op} \frac{1}{k^{1/2}} \right)^{2}.
 \end{align*}
As in step 3, splitting the sum into two parts, one can check that there is a positive constant $C_{1}''$ such that for all $n \geq 1$,
\begin{equation}\label{eqdec4}
\mathbb{E}\left[ \left\| \beta_{n-1}R_{n}' \right\|_{F}^{2}\right] \leq \frac{C_{1}''}{n}.
\end{equation}

\medskip

\noindent \textbf{Step 5: the third remainder term: $\beta_{n-1}\Delta_{n}$}
\newline
Since $\left\| \delta_{n} \right\|_{F} \leq 6C \left\| V_{n} - \Gamma_{m} \right\|_{F}^{2}$, applying Lemma 4.3 in \cite{godichon2015}, 
\begin{align*}
\mathbb{E}\left[ \left\| \beta_{n-1}\Delta_{n} \right\|_{F}^{2}\right] & \leq \mathbb{E}\left[ \left( \sum_{k=1}^{n-1}\gamma_{k}\left\| \beta_{n-1}\beta_{k}^{-1}\right\|_{op}\left\| \delta_{k} \right\|_{F}\right)^{2}\right] \\
& \leq 36C^{2}\mathbb{E}\left[ \left( \sum_{k=1}^{n-1}\gamma_{k}\left\| \beta_{n-1}\beta_{k}^{-1}\right\|_{op}\left\| V_{k} - \Gamma_{m} \right\|_{F}^{2}\right)^{2}\right] \\
& \leq 36C^{2} \left( \sum_{k=1}^{n-1} \gamma_{k}\left\| \beta_{n-1}\beta_{k}^{-1}\right\|_{op}\sqrt{\mathbb{E}\left[ \left\| V_{k} - \Gamma_{m}\right\|_{F}^{4}\right]} \right)^{2} .
\end{align*}
Thanks to Lemma 5.2, there is a positive constant $M_{2}$ such that for all $n \geq 1$, $\mathbb{E}\left[ \left\| V_{n} - \Gamma_{m}\right\|_{F}^{4}\right]~\leq~M_{2}$. Thus, splitting the sum into two parts and applying inequalities (\ref{majbeta}) and Lemma \ref{sumexp}, there are positive constant $c_{0}' , C_{2}'$ such that for all $n \geq 1$,
\begin{align*}
\mathbb{E}\left[ \left\| \beta_{n-1}\Delta_{n} \right\|_{F}^{2}\right] & \leq 72C^{2}M_{2}^{2}\left( \sum_{k=1}^{E(n/2)}\gamma_{k}e^{-\sum_{j=k}^{n}\gamma_{j}} \right)^{2} \\
& + 72C^{2}\sup_{E(n/2)+1 \leq k \leq n-1} \left\lbrace \mathbb{E}\left[ \left\| V_{k} - \Gamma_{m}\right\|_{F}^{4}\right]\right\rbrace \left( \sum_{k=E(n/2)+1}^{n}\gamma_{k}e^{-\sum_{j=k}^{n}\gamma_{j}} \right)^{2} \\
& \leq   C_{2}' \sup_{E(n/2)+1 \leq k \leq n-1}\left\lbrace \mathbb{E}\left[ \left\| V_{k} - \Gamma_{m}\right\|_{F}^{4}\right]\right\rbrace  + O \left( e^{-2c_{0}'n^{1-\alpha}} \right)  .
\end{align*}
Thus, there is a positive constant $C_{0}'$ such that for all $ n \geq 1$,
\begin{equation}\label{eqdec5}
\mathbb{E}\left[ \left\| \beta_{n-1}\Delta_{n}\right\|_{F}^{2}\right] \leq C_{0}'e^{-2c_{0}'n^{1-\alpha}} + C_{2}'\sup_{E(n/2)+1 \leq k \leq n-1}\left\lbrace \mathbb{E}\left[ \left\| V_{k} - \Gamma_{m}\right\|_{F}^{4}\right]\right\rbrace .
\end{equation} 

\medskip

\noindent \textbf{Conclusion:}
\newline
Applying Lemma \ref{lemtechnique} and decomposition (\ref{decbeta}), for all $n \geq 1$,
\begin{align*}
\mathbb{E}\left[ \left\| V_{n} - \Gamma_{m} \right\|_{F}^{2}\right]  & \leq 5  \mathbb{E}\left[ \left\| \beta_{n-1}\left( V_{1}-\Gamma_{m}\right)\right\|_{F}^{2}\right] + 5 \mathbb{E}\left[ \left\| \beta_{n-1}M_{n}\right\|_{F}^{2}\right] + 5 \mathbb{E}\left[ \left\| \beta_{n-1}R_{n} \right\|_{F}^{2}\right]\\
&  + 5\mathbb{E}\left[ \left\| \beta_{n-1}R_{n}' \right\|_{F}^{2}\right] +5 \mathbb{E}\left[ \left\| \beta_{n-1}\Delta_{n}\right\|_{F}^{2}\right]  .
\end{align*}
Applying inequalities (\ref{eqdec1}) to (\ref{eqdec5}), there are positive constants $C_{1},C_{1}',C_{2},C_{3}$ such that for all $n \geq 1$,
\[
\mathbb{E}\left[ \left\| V_{n} - \Gamma_{m} \right\|^{2} \right] \leq C_{1}e^{-C_{1}'n^{1-\alpha}} + \frac{C_{2}}{n^{\alpha}}  + C_{3}\sup_{E(n/2)+1 \leq k \leq n-1}\mathbb{E}\left[ \left\| V_{k} - \Gamma_{m} \right\|_{F}^{4}\right] . 
\]

\end{proof}

\begin{proof}[Proof of Lemma 5.4] Let us define $W_{n}:= V_{n} - \Gamma_{m} -\gamma_{n} \left( \nabla G_{\overline{m}_{n}}(V_{n}) - \nabla G_{\overline{m}_{n}}(\Gamma_{m})\right)$ and use decomposition (\ref{decxi}),
\begin{align*}
\left\| V_{n+1} - \Gamma_{m}\right\|_{F}^{2} &  = \left\| W_{n} \right\|_{F}^{2} + \gamma_{n}^{2}\left\| \xi_{n+1}\right\|_{F}^{2} + \gamma_{n}^{2}\left\| r_{n} \right\|_{F}^{2}    +2 \gamma_{n} \left\langle \xi_{n+1}, V_{n} - \Gamma_{m} \right\rangle_{F} +2\gamma_{n}^{2}\left\langle \xi_{n+1} , \nabla G_{\overline{m}_{n}}(V_{n})\right\rangle_{F}\\
& - 2\gamma_{n}^{2}\left\langle r_{n} ,   \nabla G_{\overline{m}_{n}}(V_{n}) - \nabla G_{\overline{m}_{n}}(\Gamma_{m}) \right\rangle_{F} -2\gamma_{n}\left\langle r_{n} ,V_{n} -\Gamma_{m} \right\rangle_{F}.
\end{align*}
Since $\left\| \xi_{n+1}\right\|_{F} \leq 2$, $\left\| r_{n} \right\|_{F} \leq 2$ and the fact that for all $h \in H$, $V \in \mathcal{S}(H)$, $\nabla_{h}G(V) \leq 1$, we get with an application of Cauchy-Schwarz's inequality
\[ 
\left\| V_{n+1} - \Gamma_{m}\right\|_{F}^{2} \leq \left\| W_{n} \right\|_{F}^{2} +2\gamma_{n} \left\langle \xi_{n+1} , V_{n} - \Gamma_{m}\right\rangle_{F} +2\gamma_{n} \left\| r_{n} \right\|_{F}\left\| V_{n} - \Gamma_{m}\right\|_{F} + 20\gamma_{n}^{2}.
\]
Thus, since $\left( \xi_{n} \right)$ is a sequence of martingale differences adapted to the filtration $\left( \mathcal{F}_{n} \right)$, and since $\left\| W_{n} \right\|_{F}^{2} \leq \left( 1+C^{2}c_{\gamma}^{2}\right) \left\| V_{n} - \Gamma_{m}\right\|_{F}^{2}$ (this inequality follows from  Proposition \ref{convexity} and from the fact that for all $h \in H$, $G_{h}$ is a convex application),
\begin{align*}
\mathbb{E}\left[ \left\| V_{n+1} - \Gamma_{m}\right\|_{F}^{4} \right] & \leq \mathbb{E}\left[ \left\| W_{n} \right\|_{F}^{4}\right] +2\gamma_{n}\mathbb{E}\left[ \left\| r_{n} \right\|_{F} \left\| W_{n} \right\|_{F}^{2}\left\| V_{n} - \Gamma_{m}\right\|_{F}\right]  \\ & + 40 \left( 1+C^{2}c_{\gamma}^{2}\right)\gamma_{n}^{2}\mathbb{E}\left[ \left\| V_{n} - \Gamma_{m}\right\|_{F}^{2}\right] \\
&  + 4\gamma_{n}^{2}\mathbb{E}\left[ \left\langle \xi_{n+1} , V_{n} - \Gamma_{m}\right\rangle_{F}^{2}\right] + 400\gamma_{n}^{4} + 40\gamma_{n}^{3}\mathbb{E}\left[ \left\| r_{n} \right\|_{F} \left\| V_{n} - \Gamma_{m}\right\|_{F}^{2} \right] \\
& + 4\gamma_{n}^{2}\mathbb{E}\left[ \left\| r_{n} \right\|_{F}^{2}\left\| V_{n} - \Gamma_{m}\right\|_{F}^{2}\right] .
\end{align*}
Since $\left\| \xi_{n+1}\right\|_{F} \leq 2$ and $\left\| r_{n} \right\|_{F} \leq 2$, applying Cauchy-Schwarz's inequality, there are positive constants $C_{1}',C_{2}'$ such that for all $n\geq 1$,
\begin{equation}\label{majoord4}
\mathbb{E}\left[ \left\| V_{n+1}-\Gamma_{m}\right\|_{F}^{4}\right] \leq \mathbb{E}\left[ \left\| W_{n} \right\|_{F}^{4}\right] +2\gamma_{n}\mathbb{E}\left[ \left\| r_{n} \right\|_{F} \left\| W_{n} \right\|_{F}^{2}\left\| V_{n} - \Gamma_{m}\right\|_{F}\right] + \frac{C_{1}'}{n^{3\alpha}}+ \frac{C_{2}'}{n^{2\alpha}}\mathbb{E}\left[ \left\| V_{n} - \Gamma_{m}\right\|_{F}^{2}\right].
\end{equation}
We now bound the two first terms at the right-hand side of inequality (\ref{majoord4}).

\medskip

\noindent \textbf{Step 1: bounding $\mathbb{E}\left[ \left\| W_{n} \right\|_{F}^{4}\right]$.}
\newline
Since $\nabla G_{\overline{m}_{n}}(V_{n}) - \nabla G_{\overline{m}_{n}}(\Gamma_{m}) = \int_{0}^{1}\nabla_{\overline{m}_{n}}^{2}G \left( \Gamma_{m} + t \left( V_{n} - \Gamma_{m}\right) \right)\left( V_{n} - \Gamma_{m}\right) dt  $, applying Proposition \ref{convexity}, one can check that
\begin{align*}
 \left\| W_{n} \right\|^{2} &  = \left\| V_{n} - \Gamma_{m}\right\|_{F}^{2} -2\gamma_{n} \left\langle V_{n} - \Gamma_{m} , \nabla G_{\overline{m}_{n}}(V_{n}) - \nabla G_{\overline{m}_{n}}(\Gamma_{m}) \right\rangle_{H} + \gamma_{n}^{2}\left\| \nabla G_{\overline{m}_{n}}(V_{n}) - \nabla G_{\overline{m}_{n}}(\Gamma_{m}) \right\|_{F}^{2} \\
 & \leq \left( 1+ C^{2}\gamma_{n}^{2} \right)\left\| V_{n} - \Gamma_{m}\right\|_{F}^{2} -2 \gamma_{n} \left\langle V_{n} - \Gamma_{m} , \nabla G_{\overline{m}_{n}}(V_{n}) - \nabla G_{\overline{m}_{n}}(\Gamma_{m}) \right\rangle_{H}.
\end{align*}
Since for all $h \in H$, $G_{h}$ is a convex application, $\left\| W_{n} \right\|_{F}^{2} \leq \left( 1+c_{\gamma}^{2}C^{2}\right)\left\| V_{n} - \Gamma_{m}\right\|_{F}^{2}$. Let $p'$ be a positive integer. We now introduce the sequence of events $\left( A_{n,p'}\right)_{n \in \mathbb{N}}$ defined for all $n \geq 1$ by
\begin{equation}
A_{n,p'} := \left\lbrace \omega \in \Omega , \quad \left\| V_{n}(\omega ) - \Gamma_{m}\right\|_{F} \leq n^{\frac{1-\alpha}{p'}} , \quad \text{and } \quad \left\| \overline{m}_{n}(\omega ) - m \right\| \leq \epsilon \right\rbrace ,
\end{equation} 
with $\epsilon$ defined in Proposition \ref{convexity}. For the sake of simplicity, we consider that $\epsilon '$ defined in Proposition \ref{convexity} verifies $\epsilon ' \leq 1$. Applying Proposition \ref{convexity}, let
\begin{align}\label{majbn}
\notag B_{n}:& = \left\langle \nabla G_{\overline{m}_{n}}( V_{n} ) - \nabla G_{\overline{m}_{n}}(\Gamma_{m}) , V_{n} - \Gamma_{m} \right\rangle_{F}\mathbf{1}_{A_{n,p'}}\mathbf{1}_{\left\lbrace \left\| V_{n} - \Gamma_{m}\right\|_{F} \leq \epsilon '\right\rbrace} \\
\notag  & = \int_{0}^{1}\left\langle \nabla_{\overline{m}_{n}}^{2}G\left( \Gamma_{m} + t\left( V_{n} - \Gamma_{m}\right) \right) \left( V_{n} - \Gamma_{m} \right) , V_{n} - \Gamma_{m} \right\rangle_{F}  \mathbf{1}_{\left\lbrace \left\| V_{n} - \Gamma_{m}\right\|_{F} \leq \epsilon '\right\rbrace}\mathbf{1}_{A_{n,p'}} dt \\
& \geq \frac{1}{2} c_{m}\left\| V_{n} - \Gamma_{m} \right\|_{F}^{2} \mathbf{1}_{\left\lbrace \left\| V_{n} - \Gamma_{m}\right\|_{F} \leq \epsilon '\right\rbrace}\mathbf{1}_{A_{n,p'}}  .
\end{align}
In the same way, since $G_{\overline{m}_{n}}$ is convex, let 
\begin{align*}
B_{n}':& = \left\langle \nabla G_{\overline{m}_{n}}( V_{n} ) - \nabla G_{\overline{m}_{n}}(\Gamma_{m}) , V_{n} - \Gamma_{m} \right\rangle_{F}\mathbf{1}_{A_{n,p'}}\mathbf{1}_{\left\lbrace \left\| V_{n} - \Gamma_{m}\right\|_{F} > \epsilon '\right\rbrace} \\
& =  \int_{0}^{1}\left\langle \nabla_{\overline{m}_{n}}^{2}\left( \Gamma_{m} + t\left( V_{n} - \Gamma_{m}\right) \right) \left( V_{n} - \Gamma_{m} \right) , V_{n} - \Gamma_{m} \right\rangle \mathbf{1}_{\left\lbrace \left\| V_{n} - \Gamma_{m}\right\|_{F} > \epsilon '\right\rbrace}\mathbf{1}_{A_{n,p'}}	dt \\
& \geq \int_{0}^{\frac{\epsilon '}{\left\| V_{n} - \Gamma_{m} \right\|_{F}}}\left\langle \nabla_{\overline{m}_{n}}^{2}\left( \Gamma_{m} + t\left( V_{n} - \Gamma_{m}\right) \right) \left( V_{n} - \Gamma_{m} \right) , V_{n} - \Gamma_{m} \right\rangle \mathbf{1}_{\left\lbrace \left\| V_{n} - \Gamma_{m}\right\|_{F} > \epsilon '\right\rbrace} \mathbf{1}_{A_{n,p'}} dt
\end{align*}
Applying Proposition \ref{convexity},
\begin{align}\label{majcn}
\notag B_{n}' & \geq \int_{0}^{\frac{\epsilon '}{\left\| V_{n} - \Gamma_{m} \right\|_{F}}} \frac{1}{2}c_{m} \left\| V_{n} - \Gamma_{m} \right\|_{F}^{2}\mathbf{1}_{\left\lbrace \left\| V_{n} - \Gamma_{m}\right\|_{F} > \epsilon '\right\rbrace} \mathbf{1}_{A_{n,p'}} dt \\
\notag & \geq \frac{\epsilon' c_{m}}{2 \left\| V_{n} - \Gamma_{m} \right\|_{F}}\left\| V_{n} - \Gamma_{m}\right\|_{F}^{2}\mathbf{1}_{\left\lbrace \left\| V_{n} - \Gamma_{m}\right\|_{F} > \epsilon '\right\rbrace} \mathbf{1}_{A_{n,p'}} \\
& \geq \frac{\epsilon'c_{m}}{2}n^{-\frac{1-\alpha}{p'}}\left\| V_{n} - \Gamma_{m}\right\|_{F}^{2}\mathbf{1}_{\left\lbrace \left\| V_{n} - \Gamma_{m}\right\|_{F} > \epsilon '\right\rbrace} \mathbf{1}_{A_{n,p'}}. 
\end{align}
There is a rank $n_{p'}'$ such that for all $n \geq n_{p'}'$, we have $\frac{\epsilon 'c_{m}}{2}n^{-\frac{1-\alpha}{p}}\leq \frac{1}{2}c_{m}$. Thus, applying inequalities (\ref{majbn}) and (\ref{majcn}), for all $n \geq n_{p'}'$,
\[
\left\| W_{n} \right\|_{F}^{2}\mathbf{1}_{A_{n,p'}} \leq \left( 1- \frac{\epsilon 'c_{m} }{2}\gamma_{n}n^{-\frac{1-\alpha}{p'}}\right) \left\| V_{n}  - \Gamma_{m}\right\|_{F}^{2}\mathbf{1}_{A_{n,p'}}.
\]
Thus, there are a positive constant $c_{p'}$ and a rank $n_{p'}$ such that for all $n \geq n_{p'}$,
\begin{align}\label{majwnan}
\notag \mathbb{E}\left[ \left\| W_{n} \right\|_{F}^{4}\mathbf{1}_{A_{n,p'}}\right]  & \leq \left( 1- \frac{\epsilon 'c_{m} }{2}\gamma_{n}n^{-\frac{1-\alpha}{p'}}\right)^{2} \mathbb{E}\left[  \left\| V_{n} - \Gamma_{m}\right\|_{F}^{4}\mathbf{1}_{A_{n,p'}}\right] \\
& \leq \left( 1- 2c_{p'}\gamma_{n}n^{-\frac{1-\alpha}{p'}}\right) \mathbb{E}\left[ \left\| V_{n} - \Gamma_{m}\right\|_{F}^{4}\right] .
\end{align}
Now, we  must get an upper bound for $\mathbb{E}\left[ \left\| W_{n} \right\|_{F}^{4}\mathbf{1}_{A_{n,p'}^{c}}\right]$. Since $\left\| W_{n} \right\|_{F}^{2} \leq \left( 1+c_{\gamma}^{2}C^{2}\right) \left\| V_{n} - \Gamma_{m}\right\|_{F}^{2}$ and since there is a positive constant $c_{0}$ such that for all $n \geq 1$, 
\begin{align*}
\left\| V_{n} - \Gamma_{m}\right\|_{F}  & \leq \left\| V_{1} - \Gamma_{m} \right\|_{F}~+ ~\sum_{k=1}^{n} \gamma_{k} \leq ~c_{0}n^{1-\alpha}
\end{align*}
we have
\begin{align*}
\mathbb{E}\left[ \left\| W_{n} \right\|_{F}^{4}\mathbf{1}_{A_{n,p'}^{c}}\right] & \leq \left( 1+c_{\gamma}^{2}C^{2}\right)^{2}\mathbb{E}\left[ \left\| V_{n} - \Gamma_{m}\right\|_{F}^{4} \mathbf{1}_{A_{n,p'}^{c}}\right] \\ 
& \leq \left( 1+c_{\gamma}^{2}C^{2}\right)^{2}c_{0}^{4}n^{4-4\alpha}\mathbb{P}\left[ A_{n,p'}^{c}\right] \\
& \leq \left( 1+c_{\gamma}^{2}C^{2}\right)^{2}c_{0}^{4}n^{4-4\alpha} \left( \mathbb{P}\left[ \left\| \overline{m}_{n} - m \right\| \geq \epsilon \right] + \mathbb{P}\left[ \left\| V_{n} - \Gamma_{m}\right\|_{F} \geq n^{\frac{1-\alpha}{p'}}\right] \right) .
\end{align*}
Applying Markov's inequality, Theorem 4.2 in \cite{godichon2015} and Lemma 5.2,
\begin{align*}
\mathbb{E}\left[ \left\| W_{n} \right\|_{F}^{4}\mathbf{1}_{A_{n,p'}^{c}}\right] & \leq \left( 1+c_{\gamma}^{2}C^{2}\right)^{2}c_{0}^{4}n^{4-4\alpha} \left( \frac{\mathbb{E}\left[ \left\| \overline{m}_{n}-m \right\|^{2p''}\right]}{\epsilon^{2p''}} + \frac{\mathbb{E}\left[ \left\| V_{n} - \Gamma_{m}\right\|_{F}^{2q}\right]}{n^{2q\frac{1-\alpha}{p'}}} \right) \\
& \leq \frac{K_{p''}}{\epsilon^{2p''}}\left( 1+c_{\gamma}^{2}C^{2}\right)^{2}c_{0}^{4}n^{4-4\alpha - p''} + \left( 1+c_{\gamma}^{2}C^{2}\right)^{2}c_{0}^{4}M_{q}n^{4-4\alpha - 2q \frac{1-\alpha}{p'}}. 
\end{align*}
Taking $p'' \geq  4-\alpha $ and $q \geq p'\frac{4-\alpha}{2(1-\alpha )}$,
\begin{equation}\label{majwnanc}
\mathbb{E}\left[ \left\| W_{n} \right\|_{F}^{4}\mathbf{1}_{A_{n,p'}^{c}} \right] = O \left( \frac{1}{n^{3\alpha}}\right) .
\end{equation}
Thus, applying inequalities (\ref{majwnan}) and (\ref{majwnanc}), there are positive constants $c_{p'}$, $C_{1,p'}$ and a rank $n_{p'}$ such that for all $n \geq n_{p'}$,
\begin{equation}
\label{majwn} \mathbb{E}\left[ \left\| W_{n} \right\|_{F}^{4}\right] \leq \left( 1- 2c_{p'}\gamma_{n}n^{-\frac{1-\alpha}{p'}}\right) \mathbb{E}\left[ \left\| V_{n} - \Gamma_{m} \right\|_{F}^{4}\right] + \frac{C_{1,p'}}{n^{3\alpha}}.
\end{equation}

\medskip

\noindent \textbf{Step 2: bounding $2\gamma_{n}\mathbb{E}\left[ \left\| r_{n} \right\|_{F}\left\| W_{n} \right\|_{F}^{2}\left\| V_{n} - \Gamma_{m}\right\|_{F}\right]$.}
\newline
Since $\left\| W_{n} \right\|_{F}^{2} \leq \left( 1+c_{\gamma}^{2}C^{2}\right) \left\| V_{n} - \Gamma_{m}\right\|_{F}^{2}$, applying Lemma \ref{lemtechnique}, let
\begin{align*}
D_{n} :& =2\gamma_{n}\mathbb{E}\left[ \left\| r_{n} \right\|_{F}\left\| W_{n} \right\|_{F}^{2}\left\| V_{n} - \Gamma_{m}\right\|_{F}\right] \\
& \leq 2\left( 1+c_{\gamma}^{2}C^{2}\right) \gamma_{n} \mathbb{E}\left[ \left\| r_{n} \right\|_{F}\left\| V_{n} - \Gamma_{m}\right\|_{F}^{3}\right] \\
& \leq \frac{2}{c_{p'}}\left( 1+c_{\gamma}^{2}C^{2}\right)^{2}\gamma_{n}n^{\frac{1-\alpha}{p'}}\mathbb{E}\left[ \left\| r_{n} \right\|_{F}^{2}\left\| V_{n} - \Gamma_{m}\right\|_{F}^{2}\right] + \frac{1}{2}c_{p'}\gamma_{n}n^{-\frac{1-\alpha}{p'}}\mathbb{E}\left[ \left\| V_{n} - \Gamma_{m}\right\|_{F}^{4}\right] \\
& \leq \frac{2}{c_{p'}^{2}}\left( 1+c_{\gamma}^{2}C^{2}\right)^{4}\gamma_{n}n^{3\frac{1-\alpha}{p'}}\mathbb{E}\left[ \left\| r_{n} \right\|_{F}^{4}\right] + c_{p'}\gamma_{n}n^{-\frac{1-\alpha}{p'}}\mathbb{E}\left[ \left\| V_{n} - \Gamma_{m}\right\|_{F}^{4}\right] .
\end{align*}
Since $\left\| r_{n} \right\|_{F} \leq \left( \sqrt{C} + C\sqrt{\left\| \Gamma_{m}\right\|_{F}}\right) \left\| \overline{m}_{n} - m \right\|_{F}$ and applying Theorem 4.2 in \cite{godichon2015},
\begin{align}\label{majdn}
\notag D_{n} & \leq \frac{2}{c_{p'}^{2}}\left( 1+c_{\gamma}^{2}C^{2}\right)^{4}\left( \sqrt{C} + C\sqrt{\left\| \Gamma_{m}\right\|_{F}}\right)^{4}\gamma_{n}n^{3\frac{1-\alpha}{p'}}\mathbb{E}\left[ \left\| \overline{m}_{n} - m \right\|^{4}\right] + c_{p'}\gamma_{n}n^{-\frac{1-\alpha}{p'}}\mathbb{E}\left[ \left\| V_{n} - \Gamma_{m}\right\|_{F}^{4}\right] \\
\notag & \leq \frac{2}{c_{p'}^{2}}K_{2}\left( 1+c_{\gamma}^{2}C^{2}\right)^{4}\left( \sqrt{C} + C\sqrt{\left\| \Gamma_{m}\right\|_{F}}\right)^{4}\gamma_{n}n^{3\frac{1-\alpha}{p'}}\frac{1}{n^{2}} + c_{p'}\gamma_{n}n^{-\frac{1-\alpha}{p'}}\mathbb{E}\left[ \left\| V_{n} - \Gamma_{m}\right\|_{F}^{4}\right] \\
& = c_{p'}\gamma_{n}n^{-\frac{1-\alpha}{p'}}\mathbb{E}\left[ \left\| V_{n} - \Gamma_{m}\right\|_{F}^{4}\right] + O \left( \frac{1}{n^{2 + \alpha -3(1-\alpha)/p'}}\right) .
\end{align}

\medskip

\noindent \textbf{Step 3: Conclusion.}
\newline
Applying inequalities (\ref{majoord4}), (\ref{majwn}) and (\ref{majdn}), there are a rank $n_{p'}$ and positive constants $c_{p'}, C_{1,p'},C_{2,p'},C_{3,p'}$ such that for all $n \geq n_{p'}$,
\[
\mathbb{E}\left[ \left\| V_{n+1} - \Gamma_{m}\right\|_{F}^{4} \right] \leq \left( 1- c_{p'}\gamma_{n}n^{-\frac{1-\alpha}{p'}}\right)\mathbb{E}\left[ \left\| V_{n} - \Gamma_{m} \right\|_{F}^{4}\right] + \frac{C_{1,p'}}{n^{3\alpha}} + \frac{C_{2,p'}}{n^{2\alpha}}\mathbb{E}\left[ \left\| V_{n} - \Gamma_{m}\right\|_{F}^{2}\right] + \frac{C_{3,p'}}{n^{2+\alpha -3\frac{1-\alpha}{p'}}}.
\]
\end{proof}

\section{Some technical inequalities}

First, the following lemma recalls some well-known inequalities.
\begin{lem}\label{lemtechnique}
Let $a,b,c$ be positive constants. Then, 
\begin{align*}
ab &  \leq \frac{a^{2}}{2c}+\frac{b^{2}c}{2}, \\
a & \leq \frac{c}{2}+ \frac{a^{2}}{2c}.
\end{align*}
Moreover, let $k,p$ be positive integers and $a_{1},...,a_{p}$ be positive constants. Then,
\[
\left( \sum_{j=1}^{p}a_{j} \right)^{k} \leq p^{k-1}\sum_{j=1}^{p}a_{j}^{k}.
\]
\end{lem}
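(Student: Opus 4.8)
The plan is to prove the three inequalities of the lemma separately; none of them touches the probabilistic apparatus of the paper, so the whole argument is elementary and reduces to completing a square, specializing, and invoking convexity. I would present them in the order stated, since the second follows from the first and the third stands on its own.

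First I would establish $ab \leq \frac{a^2}{2c} + \frac{b^2 c}{2}$ by completing the square: because $c>0$, the real number $\frac{a}{\sqrt{2c}} - b\sqrt{c/2}$ is well defined, and expanding $\left(\frac{a}{\sqrt{2c}} - b\sqrt{c/2}\right)^2 \geq 0$ gives $\frac{a^2}{2c} - ab + \frac{b^2 c}{2} \geq 0$, which rearranges to the claim. Equivalently this is Young's inequality $xy \leq \frac{x^2}{2}+\frac{y^2}{2}$ applied to $x=a/\sqrt{c}$ and $y=b\sqrt{c}$. The second inequality $a \leq \frac{c}{2} + \frac{a^2}{2c}$ is then immediate, being the special case $b=1$ of the first.

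For the third inequality $\left(\sum_{j=1}^p a_j\right)^k \leq p^{k-1}\sum_{j=1}^p a_j^k$, I would use convexity of $t \mapsto t^k$ on $[0,\infty)$, which holds for every integer $k \geq 1$. Jensen's inequality with the uniform weights $1/p$ yields $\left(\frac{1}{p}\sum_{j=1}^p a_j\right)^k \leq \frac{1}{p}\sum_{j=1}^p a_j^k$, and multiplying through by $p^k$ gives the stated bound; an induction on $p$ from the two-term case is an equally valid but less direct route. The honest assessment is that there is no real obstacle here: these are classical inequalities, and the only care required is bookkeeping the constants $\frac{1}{2c}$ and $\frac{c}{2}$ in the first bound and noting that $t\mapsto t^k$ is genuinely convex for integer $k\geq 1$. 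Collecting them as a single lemma is worthwhile precisely because they are invoked repeatedly in the moment estimates of Lemmas 5.2--5.4.
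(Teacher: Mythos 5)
Your proposal is correct. Note that the paper offers no proof of this lemma at all --- it is introduced with the phrase ``the following lemma recalls some well-known inequalities'' and the paper moves straight on to Lemma~\ref{sumexp} --- so your arguments simply supply the standard verifications: completing the square (equivalently Young's inequality with the scaling $x=a/\sqrt{c}$, $y=b\sqrt{c}$) for the first bound, the specialization $b=1$ for the second, and Jensen's inequality for the convex map $t\mapsto t^{k}$ with uniform weights $1/p$ for the third. All three steps are sound, and in particular the constant bookkeeping is right: multiplying $\left(\frac{1}{p}\sum_{j=1}^{p}a_{j}\right)^{k}\leq \frac{1}{p}\sum_{j=1}^{p}a_{j}^{k}$ through by $p^{k}$ yields exactly the factor $p^{k-1}$ in the statement.
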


The following lemma gives the asymptotic behavior for some specific sequences of descent steps.
\begin{lem}\label{sumexp}
Let $\alpha,\beta $ be non-negative constants such that $0<\alpha<1$, and $\left( u_{n}\right)$, $\left( v_{n} \right)$ be two sequences defined for all $n \geq 1$ by
\begin{align*}
u_{n} & := \frac{c_{u}}{n^{\alpha}}, & v_{n}:=\frac{c_{v}}{n^{\beta}},
\end{align*}
with $c_{u},c_{v} > 0$. Thus, there is a positive constant $c_{0}$ such that for all $n \geq 1$,
\begin{align}
\label{sumexp1} & \sum_{k=1}^{E(n/2)} e^{-\sum_{j=k}^{n}u_{j}}u_{k}v_{k} = O \left(  e^{-c_{0}n^{1-\alpha}} \right) , \\
\label{sumexp2} & \sum_{k=E(n/2)+1}^{n}e^{-\sum_{j=k}^{n}u_{j}}u_{k}v_{k}  = O \left( v_{n}\right) ,
\end{align}
where $E(.)$ is the integer part function.
\end{lem}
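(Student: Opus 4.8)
The plan is to prove the two estimates separately, exploiting throughout the elementary fact that the exponent $a_{k} := \sum_{j=k}^{n} u_{j} = c_{u}\sum_{j=k}^{n} j^{-\alpha}$ can be controlled by comparison with the integral of the decreasing function $x \mapsto x^{-\alpha}$. Concretely, I would use the lower bound $\sum_{j=k}^{n} j^{-\alpha} \geq \int_{k}^{n+1} x^{-\alpha}\,dx = \frac{(n+1)^{1-\alpha} - k^{1-\alpha}}{1-\alpha}$, which drives the exponentially small estimate \eqref{sumexp1}, whereas for \eqref{sumexp2} the gain will come instead from a telescoping bound on $e^{-a_{k}}u_{k}$.

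For \eqref{sumexp1}, first I would observe that for $1 \leq k \leq E(n/2)$ one has $k^{1-\alpha} \leq (n/2)^{1-\alpha} = 2^{\alpha-1} n^{1-\alpha}$, so the integral lower bound yields $a_{k} \geq \frac{c_{u}}{1-\alpha}\bigl(n^{1-\alpha} - 2^{\alpha-1}n^{1-\alpha}\bigr) = \kappa\, n^{1-\alpha}$, where $\kappa := \frac{c_{u}(1 - 2^{\alpha-1})}{1-\alpha} > 0$, positivity being guaranteed by $0 < \alpha < 1$. Consequently $e^{-a_{k}} \leq e^{-\kappa n^{1-\alpha}}$ uniformly over this range, and factoring this out gives $\sum_{k=1}^{E(n/2)} e^{-a_{k}} u_{k} v_{k} \leq e^{-\kappa n^{1-\alpha}}\sum_{k=1}^{E(n/2)} u_{k} v_{k}$. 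The residual sum equals $c_{u}c_{v}\sum_{k} k^{-(\alpha+\beta)}$ and grows at most polynomially in $n$ (bounded when $\alpha+\beta>1$, logarithmic when $\alpha+\beta=1$, and $O(n^{1-\alpha-\beta})$ otherwise), so picking any $c_{0} \in (0,\kappa)$ absorbs that prefactor and delivers the claimed $O(e^{-c_{0} n^{1-\alpha}})$.

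For \eqref{sumexp2}, I would first strip off the $v_{k}$ factor: since $\beta \geq 0$ and $k \geq E(n/2)+1 \geq n/2$, one has $v_{k} = c_{v} k^{-\beta} \leq c_{v}(n/2)^{-\beta} = 2^{\beta} v_{n}$, so $v_{k} = O(v_{n})$ uniformly on the upper half and it suffices to bound $\sum_{k=E(n/2)+1}^{n} e^{-a_{k}} u_{k}$ by a constant. Here I would write $u_{k} = a_{k} - a_{k+1}$ with the convention $a_{n+1}=0$, and use monotonicity of $t \mapsto e^{-t}$: since $e^{-t} \geq e^{-a_{k}}$ for $t \in [a_{k+1},a_{k}]$, one gets $e^{-a_{k}} u_{k} \leq \int_{a_{k+1}}^{a_{k}} e^{-t}\,dt = e^{-a_{k+1}} - e^{-a_{k}}$. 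Summing over $k$ then telescopes to $e^{-a_{n+1}} - e^{-a_{E(n/2)+1}} = 1 - e^{-a_{E(n/2)+1}} \leq 1$, which closes the estimate with the explicit constant $2^{\beta}$.

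The computations are routine; the only points demanding care are making the integral comparison produce a genuinely positive constant $\kappa$ that is uniform over $k \leq E(n/2)$ (handled by the worst case $k = E(n/2)$), and, in \eqref{sumexp1}, ensuring that the polynomial prefactor $\sum_{k} u_{k} v_{k}$ is swallowed by taking $c_{0}$ strictly below $\kappa$ rather than equal to it. The telescoping identity underlying \eqref{sumexp2} is the cleanest available route and sidesteps any case distinction on the sign of $1-\alpha-\beta$.
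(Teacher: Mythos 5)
Your proof is correct, and it splits into two halves that compare differently with the paper's. For \eqref{sumexp1} you follow essentially the same route as the paper: both arguments hinge on the fact that for $k \leq E(n/2)$ the exponent $\sum_{j=k}^{n}u_{j}$ is bounded below by a fixed positive constant times $n^{1-\alpha}$ (the paper gets this by counting at least $n/2$ terms each of size at least $c_{u}n^{-\alpha}$, you by integral comparison --- an immaterial difference), after which the polynomially bounded prefactor is absorbed by taking $c_{0}$ strictly smaller than that constant. For \eqref{sumexp2}, however, your argument is genuinely different. The paper keeps the factor $k^{-\alpha-\beta}$ inside the sum, bounds the sum by $e^{-c(n+1)^{1-\alpha}}\int_{E(n/2)+1}^{n+1} e^{t^{1-\alpha}}t^{-\alpha-\beta}\,dt$ via an integral test, and then evaluates that integral by integration by parts --- a step requiring some care, since the remainder term is controlled only as a $o(\cdot)$ of the integral itself and only beyond some rank $n_{u,v}$. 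You instead strip off $v_{k}\leq 2^{\beta}v_{n}$ on the upper half (valid since $\beta \geq 0$ and $k > n/2$) and bound the remaining sum by the telescoping estimate $e^{-a_{k}}u_{k} = e^{-a_{k}}\left(a_{k}-a_{k+1}\right) \leq e^{-a_{k+1}}-e^{-a_{k}}$, whose sum over the range is at most $1$. Your route is more elementary, produces the explicit constant $2^{\beta}$, and sidesteps both the asymptotic bookkeeping and the case analysis implicit in the paper's integration by parts; what the paper's computation buys in exchange is the precise order $e^{n^{1-\alpha}}n^{-\beta}$ of the weighted exponential sum, which is slightly more information than the lemma actually requires.
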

\begin{proof}[ Proof of Lemma \ref{sumexp}]
We first prove inequality (\ref{sumexp1}). For all $n \geq 1$,
\begin{align*}
\sum_{k=1}^{E(n/2)} e^{-\sum_{j=k}^{n}u_{j}}u_{k}v_{k} & = c_{u}c_{v}\sum_{k=1}^{E(n/2)} e^{-\sum_{j=k}^{n}u_{j}}\frac{1}{k^{\alpha + \beta}} \\
& \leq c_{u}c_{v}\sum_{k=1}^{E(n/2)} e^{-c_{u}\sum_{j=k}^{n}\frac{1}{j^{\alpha}}} .
\end{align*}
Moreover, for all $ k\leq E(n/2)$,
\begin{align*}
 c_{u}\sum_{j=k}^{n}\frac{1}{j^{\alpha}} & \geq  c_{u}\frac{n}{2}\frac{1}{n^{\alpha}} \\
 & \geq \frac{c_{u}}{2}n^{1-\alpha}.
\end{align*}
Thus,
\[
\sum_{k=1}^{E(n/2)} e^{-\sum_{j=k}^{n}u_{j}}u_{k}v_{k} \leq c_{u}c_{v}ne^{-\frac{c_{u}}{2}n^{1-\alpha}}.
\]
We now prove inequality (\ref{sumexp2}). With the help of an integral test for convergence,
\begin{align*}
\sum_{j=k}^{n}u_{j} & = c_{u} \sum_{j=k}^{n}\frac{1}{j^{\alpha}} \\
& \geq c_{u}\int_{k}^{n+1}\frac{1}{t^{\alpha}}dt \\
& \geq \frac{c_{u}}{1-\alpha}\left( (n+1)^{1-\alpha} - k^{-\alpha} \right). 
\end{align*}
Thus,
\[
\sum_{k=E(n/2)+1}^{n}e^{-\sum_{j=k}^{n}u_{j}}u_{k}v_{k} \leq c_{u}c_{v}e^{-(n+1)^{1-\alpha}}\sum_{k=E(n/2)+1}^{n}e^{k^{1-\alpha}}k^{-\alpha - \beta}
\]
With the help of an integral test for convergence, there is a rank $n_{u,v}$ (for sake of simplicity, we consider that $n_{u,v}=1$) such that for all $n \geq n_{u,v}$,
\begin{align*}
\sum_{k=E(n/2)+1}^{n}e^{k^{1-\alpha}}k^{-\alpha - \beta} & \leq \int_{E(n/2)+1}^{n+1}e^{t^{1-\alpha}}t^{-\alpha - \beta }dt \\
& \leq \frac{1}{1-\alpha} \left[ e^{t^{1-\alpha}}t^{-\beta} \right]_{E(n/2)+1}^{n} + \beta\int_{E(n/2)+1}^{n}e^{t^{1-\alpha}}t^{-1-\beta}dt \\
& = e^{(n+1)^{1-\alpha}(n+1)^{-\beta}} + o \left( \int_{E(n/2)+1}^{n+1}e^{t^{1-\alpha}}t^{-\alpha - \beta }dt \right) ,
\end{align*}
since $\alpha < 1$. Thus,
\[
\sum_{k=E(n/2)+1}^{n}e^{k^{1-\alpha}}k^{-\alpha - \beta} = O \left( e^{n^{1-\alpha}n^{-\beta}} \right) .
\]
As a conclusion, we have
\begin{align*}
\sum_{k=E(n/2)+1}^{n}e^{-\sum_{j=k}^{n}u_{j}}u_{k}v_{k} & = O \left( e^{-(n+1)^{1-\alpha} + n^{1-\alpha}}v_{n} \right)  \\
& = O \left( v_{n} \right) .
\end{align*}

\end{proof}

\end{appendix}

\def\cprime{$'$}

\end{document}